\documentclass[preprint,12pt]{article}
\usepackage{amsmath}
\usepackage{amsfonts}
\usepackage{amssymb}
\usepackage{amsthm}
\usepackage{amstext}
\usepackage{graphicx}
\usepackage[all]{xy}
 
\newtheorem{teorema}{Theorem}[section]
\newtheorem{lema}[teorema]{Lemma}
\newtheorem{corolario}[teorema]{Corollary}
\newtheorem{proposicion}[teorema]{Proposition}
\theoremstyle{definition}
\newtheorem{definicion}[teorema]{Definition}
\newtheorem{remark}[teorema]{Remark}

\def\div{\operatorname{div}}
\def\grad{\operatorname{grad}}
\def\curl{\operatorname{curl}}
\def\Dr{D_{\rm r}}
\def\G{\operatorname{\mathcal{G}}}
\def\H{\mathbb{H}}
\def\Hi{\operatorname{\mathcal{H}}}
\def\Har{\operatorname{Har}}
\def\Im{\operatorname{Im}}
\def\Ker{\operatorname{Ker}}

\def\C{\mathbb{C}}
\def\M{\mathfrak{M}}
\def\PV{\mbox{\;P.V.\!\!}}
\def\R{\mathbb{R}}
\def\SI{\operatorname{SI}}

\def\Sc{\operatorname{Sc}}
\def\Vec{\operatorname{Vec}}
\def\iff{\Leftrightarrow}
\def\Sol{\operatorname{Sol}}
\def\Irr{\operatorname{Irr}}
\def\Fi{F_{0,\partial\Omega}}
\def\Fii{{\overrightarrow F}_{\!\!1,\partial\Omega}}
\def\Ti{T_{0,\Omega}}
\def\Tii{{\overrightarrow T}_{\!\!1,\Omega}}
\def\Tiii{\overrightarrow T_{\!\!2,\Omega}}
\def\tr{\operatorname{\rm tr}}
\def\vecK{\overrightarrow{K}}
\def\nrm{_{\mathbf n}}
\def\tng{_{\mathbf t}}

\hyphenation{Lip-schitz}

\begin{document}
 
\begin{center}
  
  {\Large Hilbert transform for the\\ three-dimensional Vekua equation}

\vspace{2ex}

  Briceyda B. Delgado
 \\R. Michael Porter 

\vspace{2ex}

 Department of Mathematics, Cinvestav-Quer\'etaro, \\
Libramiento Norponiente 2000, Fracc.\ Real de Juriquilla,\\
 Santiago de Quer\'etaro, C.P.\ 76230 M\'exico

\end{center}





 
\begin{abstract}
  The three-dimensional Hilbert transform takes scalar data on the
  boundary of a domain $\Omega\subseteq\R^3$ and produces the boundary
  value of the vector part of a quaternionic monogenic
  (hyperholomorphic) function of three real variables, for which the
  scalar part coincides with the original data. This is analogous to
  the question of the boundary correspondence of harmonic
  conjugates. Generalizing a representation of the Hilbert transform
  $\Hi$ in $\R^3$ given by T.\ Qian and Y.\ Yang (valid in $\R^n$), we
  define the Hilbert transform $\Hi _f$ associated to the main Vekua
  equation $DW=(Df/f)\overline{W}$ in bounded Lipschitz domains in
  $\R^3$. This leads to an investigation of the three-dimensional
  analogue of the Dirichlet-to-Neumann map for the conductivity
  equation.
\end{abstract}

\noindent Keywords:
Hilbert transform, Vekua-Hilbert transform, main Vekua equation, conductivity equation, Dirichlet-to-Neumann map, div-curl system, quaternionic analysis, hyperholomorphic function, monogenic function\\
  
\noindent Classification:  44A15 30E20 30G20 35J25 35Q60 

\section{Introduction}
The aim of this paper is to show the existence of a natural ``Hilbert
transform'' $\Hi_f$ associated to the main Vekua equation
\eqref{eq:Vekua} in a bounded Lipschitz domain $\Omega\subseteq\R^3$,
in the generality of solutions in the Sobolev space
$H^{1/2}(\partial\Omega)$. This is a system of real equations in the
dress of a quaternionic formula.  The scalar part of a solution of the
main Vekua equation satisfies a conductivity equation, while the
vector part satisfies a double curl-type equation coupled with the
condition of being divergence free (see \eqref{eq:conductivity} and
\eqref{eq:doublerot} below).  Our construction of $\Hi_f$ is inspired
by the Hilbert transform given by T.\ Qian and others for the
monogenic case of the Vekua equation, defined in terms of the
component operators of the principal value singular Cauchy integral
operator and an inverse operator related to layer potentials
\cite{AKQ2009,Qian2008,Qian2009}. In the literature the Hilbert
transform has sometimes been mistakenly identified with the vector
part of the boundary value of the Cauchy integral, since they happen
to coincide for half spaces in $\R^n$ \cite[p.\ 758]{Qian2009} and for
the unit disk in the plane \cite[Example 2.7(2)]{AKQ2009}. However,
this does not hold for general domains, including higher dimensional
balls \cite[Example 2.7(3)]{AKQ2009}.
 
The conductivity equation describes the behavior of an electric
potential in a conductive medium. In 1980, A.\ P.\ Calder\'on
\cite{Calderon1980} posed the question of whether it is possible to
determine the electrical conductivity of a medium by making
measurements at the boundary. Results obtained since then on the
solvability, stability, uniqueness, and other properties of the
Dirichlet problem associated to this kind of elliptic second order
differential equation in $\R^n$ for $n\geq 3$ (e.g.\ Lemma
\ref{lema:solution_conductivity} below and
\cite{Kenig2007,Uhlmann2009}) will be essential in the development of
the present work. This inverse problem is the subject of Electrical
Impedance Tomography; for more about medical applications of the
conductivity equation see \cite{Holder2005}.  A series of articles of
Brackx et al.\ \cite{Brac2006-1,Brac2006-2, Brac2008} study the
relationship between the Hilbert transforms and conjugate harmonic
functions in the context of Clifford algebras on the unit sphere.

The outline of the paper is as follows. In Section
\ref{sec:preliminaries} we present the notation with basic facts of
quaternionic analysis. In Section \ref{sec:Hilbert-monogenic} some
operator properties related to boundedness and invertibility of the
Hilbert transform $\Hi$ are given, as well as an explicit form for its
adjoint. This is followed by the introduction of the scalar and
vector Dirichlet-to-Neumann maps for the monogenic case.  In
Section \ref{sec:f-HT} we construct the ``Vekua-Hilbert transform''
$\Hi_f$ associated to the main Vekua equation in bounded Lipschitz
domains of $\R^3$, and establish some basic facts related to the
elements of its construction.  In Section \ref{sec:Dirichlet-Neumann}
we connect the Vekua-Hilbert transform with the scalar and vector
Dirichlet-to-Neumann maps for the conductivity equation, and verify
the continuous dependence on the boundary values of the conductivity
$f^2$ for the Vekua-Hilbert transform $\Hi_f$ and the quaternionic
Dirichlet-to-Neumann map. In an Appendix we use the
ingredients of the article to present an improved generalized solution
of the div-curl system, removing the previous requirement of
star-shapedness of the domain \cite{DelPor2017}.

\section{Preliminaries\label{sec:preliminaries}}

We follow almost entirely the notation in \cite{GuHaSpr2008} regarding
quaternions and the basic integral operators related to
hyperholomorphic (or monogenic) functions.  In particular $e_0=1$
denotes the multiplicative unit of the non-commutative algebra $\H$ of
quaternions, while the nonscalar units are $e_1,e_2,e_3$ and satisfy
$e_ie_j=-e_je_i$ for $i\not=j$. A quaternion is
$x=x_0+\sum_{i=1}^3{e_i x_i}=\Sc x + \Vec x\in\H$ ($x_i\in\R$,
$\Sc x=x_0$) and we freely identify the subspaces $\Sc\H$, $\Vec\H$
with the real numbers $\R$ and Euclidean space $\R^3$ respectively.
For a domain $\Omega\subseteq\R^3\subseteq\H$ we have function spaces
such as $C^r(\Omega,\H)$ and in particular the Sobolev spaces
\begin{align*} 
  W^{1,p}(\Omega,\H) &= \left\{u\in L^p(\Omega,\H)\colon\
                    \grad u_i\in L^p(\Omega,\R^3)\right\}, \\
  W_0^{1,p}(\Omega,\H) &= \overline{C_0^{\infty}(\Omega,\H)}
                    \subseteq W^{1,p}(\Omega,\H), 
\end{align*}
where $1\leq p\leq \infty$ and $C_0^\infty$ denotes smooth functions
of compact support. Facts about Sobolev spaces are drawn from
\cite{Fournier1978,Brezis2011,Grisvard1985}. The space
\begin{align*} 
W^{1-1/p,p}(\partial\Omega,\H) = 
     \{\varphi\in L^p(\partial\Omega,\H)&\colon\ u|_{\partial\Omega}=
		 \varphi \mbox{ for some } u\in W^{1,p}(\Omega,\H)\},
\end{align*}
of the boundary values of functions in $W^{1,p}(\Omega,\H)$ is
justified by the Trace Theorem, valid for bounded open sets $\Omega$
in $\R^n$, with Lipschitz boundary $\partial\Omega$.  The trace
operator
\begin{align*}
   \tr \colon W^{1,p}(\Omega,\H) \to W^{1-1/p,p}(\partial\Omega,\H),
	 \quad \tr u = u|_{\partial\Omega},
\end{align*} 
is a surjective, bounded linear operator with a continuous right
inverse.  When $p=2$, we will use the usual notation
$H^{1/2}(\partial\Omega,\H)=W^{1-1/2,2}(\partial\Omega,\H)$.  Whenever
$\partial\Omega$ is mentioned we specify the smoothness required for
applying the basic facts about Sobolev spaces. For facility of
notation, we will write $\tr_+ w(\vec x)$ and $\tr_- w(\vec x)$ for
the non-tangential limit of $w(\vec y)$ as $\vec y\in\Omega^{\pm}$
tends to $\vec x\in\partial\Omega$, where $\Omega^+=\Omega$ and
$\Omega^-=\R^3\setminus \overline{\Omega}$. The
above applies to the Sobolev subspaces with $\R$ or $\R^3$ in place of
$\H$. We gather in Theorem \ref{theo:T_acotado} below the facts we
will need about certain integral operators on these spaces.

\subsection{Quaternionic analysis\label{sec:quatanalysis}}

From now on $\vec x\in\R^3\subseteq\H$. The Moisil-Teodorescu (or
Cauchy-Riemann or occasionally, Dirac) differential operator 
\begin{align}\label{eq:operador_Dirac}
  D = e_1\frac{\partial}{\partial{x_1}} +
      e_2\frac{\partial}{\partial{x_2}} + 
      e_3\frac{\partial}{\partial{x_3}}
\end{align}
applied on the left to $w=w_0+\vec w$ gives
\[   Dw =-\div\vec w + \grad w_0 + \curl\vec w.  
\] 
A function $w\in C^1(\Omega,\H)$ is called \textit{monogenic} in 
$\Omega$ when $Dw=0$ and we write
$w\in\M(\Omega)$. Thus $w\in\M(\Omega)$ if and only if
\begin{align}\label{eq:monogenicas_izquierda}
   \div\vec w = 0, \       \curl\vec w =-\grad w_0.
\end{align}  
From $\Delta w_0=-D^2w_0$, we have
$\M(\Omega)\subseteq\Har(\Omega,\H)$. Write $\Sol(\Omega,\R^3)$ and
$\Irr(\Omega,\R^3)$ for the fields with vanishing divergence
(solenoidal) and vanishing curl (irrotational), respectively, and
\begin{align} \label{eq:SI} 
  \SI(\Omega)  = &\Sol(\Omega,\R^3) \cap \Irr(\Omega,\R^3), 
\end{align}
for the solenoidal-irrotational vector fields (vectorial monogenic constants).

The \textit{Cauchy kernel} is the SI vector field
\begin{align} 
    E(\vec x) = -\frac{\vec x}{4\pi|\vec x|^3}, \quad \vec x \in \R^3-\{0\}.
\end{align}
The \textit{Cauchy} operator
\begin{align}\label{eq:operador_Cauchy}
   F_{\partial\Omega}[\varphi](\vec x)=\int_{\partial\Omega} E(\vec y-  
	 \vec x) \eta(\vec y) \varphi(\vec y) \,ds_{\vec y}, \quad 
   \vec x\in\R^3\setminus \partial\Omega,
\end{align}
is related to the \textit{Teodorescu transform}
\begin{align}\label{eq:operador_Teodorescu}
  T_{\Omega}[w](\vec x) = -\int_{\Omega} E(\vec y-\vec x) w(\vec y)
    \,d\vec y, \quad \vec x\in\R^3, 
\end{align}
by the Borel-Pompeiu formula
\begin{align}\label{eq:formula_BP}
   T_{\Omega}[Dw](\vec x)+F_{\partial \Omega}[\tr w](\vec x)= \left\{
   \begin{array}{ll}
      w(\vec x), \ & \vec x\in \Omega,\\
      0,           & \vec x\in \R^3\setminus \overline{\Omega}.
   \end{array}
\right. 
\end{align} 
The Teodorescu transform acts as the right inverse operator of
$D$, $DT_\Omega[w]=w$, valid for quaternionic $w$ with appropriate
continuity suppositions.

The three-dimensional singular Cauchy integral operator
\begin{align}\label{eq:involucion_S}
  S_{\partial \Omega}[\varphi](\vec x)=
  2\PV \int_{\partial \Omega}{E(\vec y-\vec x) \eta(\vec y) 
  \varphi(\vec y) ds_{\vec y}}, \quad \vec x\in \partial \Omega
\end{align}
satisfies $S_{\partial\Omega}^2=I$ and also the Plemelj-Sokhotski formulas
\begin{align}\label{eq:formula_PS}
  \tr_\pm  F_{\partial \Omega}[\varphi](\vec x)
	 = \frac{1}{2}[\pm\varphi(\vec x) + S_{\partial\Omega}[\varphi]
	 (\vec x)];
\end{align} 
from this it is seen that $S_{\partial \Omega}[\varphi]=\varphi$ is 
necessary and sufficient for $\varphi$ to represent the boundary
values of a monogenic function defined in $\Omega$; i.e.\
$\varphi=\tr_+F_{\partial \Omega}[\varphi]$; the opposite condition
$S_{\partial \Omega}[\varphi]=-\varphi$ is necessary and sufficient
for $\varphi$ to have a monogenic continuation into the exterior
domain $\Omega^-$ vanishing at
$\infty$. 

The abovementioned operators are connected with the
\textit{single-layer potential} \cite{Chen1992, McLean2000}
\begin{equation}\label{eq:single_layer_operator} 
    M[\varphi](\vec x) = \int_{\partial\Omega} 
    \frac{\varphi(\vec y )}{4\pi|\vec y -\vec x |} \,ds_{\vec y} ,\quad
    \vec x \in \R^3\setminus \partial\Omega 
\end{equation}
and with the \textit{boundary single-layer operator} $\tr M$ obtained by
evaluating the integral in \eqref{eq:single_layer_operator}
for $x\in\partial\Omega$, thus extending $M$ to all of $\R^3$.

The integral operator \eqref{eq:operador_Teodorescu} makes sense when
$w$ is integrable, as do the operators \eqref{eq:operador_Cauchy},
\eqref{eq:involucion_S} when $\varphi\in L^p(\partial\Omega,\H)$
\cite{GuHaSpr2008}.  According to \cite[Remark 2.5.11]{GuSpr1990}, the
Plemelj-Sokhotski formulas are valid in the Sobolev spaces
$W^{1-1/p,p}(\partial\Omega,\H)\subseteq L^p(\partial\Omega,\H)$.  Let
$H^{-1/2}(\partial\Omega,\H)$ be the dual of the Sobolev space
$H^{1/2}(\partial\Omega,\H)$. We have

\begin{teorema}\label{theo:T_acotado}
Let $\Omega$ be a bounded domain and let $1<p<\infty$. The following 
operators are continuous:
\begin{enumerate}
\item[(a)] The Teodorescu transform \cite[Theorem 8.4]{GuHaSpr2008},
  \cite[Theorem 4.1.7]{GuHaSpr2016}
\[ T_{\Omega} \colon L^p(\Omega,\H) 
	 \to W^{1,p}(\Omega,\H);  \] 
\item[(b)] The singular Cauchy integral operator \cite[p.\ 421]{Mikh1986}
\[ S_{\partial\Omega} \colon W^{1-1/p,p}(\partial\Omega,\H) \to
   W^{1-1/p,p}(\partial\Omega,\H);  \]  
\item[(c)] The single-layer potential \cite[p.\ 38]{Colton1992}
\[ M \colon H^{-1/2}(\partial\Omega,\H) \to W^{1,2}(\Omega,\H); \]  	
\item[(d)] The boundary single-layer operator \cite[Proposition
  2.4.7]{Knu2002}, \cite[Theorem 6.12]{McLean2000}
\[ \tr M\colon H^{-1/2}(\partial\Omega,\H) \to H^{1/2}(\partial\Omega,\H)  .
\]
\end{enumerate}	
\end{teorema}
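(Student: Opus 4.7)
The plan is to verify each of the four continuity statements by invoking the referenced results and outlining briefly the singular-integral analysis that underlies them; this is a compilation of standard facts from Calder\'on--Zygmund theory and layer-potential theory on Lipschitz domains rather than a collection of new arguments.

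For part (a), I would first observe that the kernel $E(\vec y-\vec x)$ has a weak singularity of order $|\vec y-\vec x|^{-2}$ in $\R^3$, so standard convolution estimates give boundedness of $T_\Omega$ on $L^p(\Omega,\H)$ for $1<p<\infty$. The $W^{1,p}$-estimate is then obtained by differentiating under the integral: formally $\partial_i T_\Omega[w]$ yields principal-value operators of Calder\'on--Zygmund type (combinations of Riesz transforms with smooth algebraic factors) acting componentwise on $w$, plus a bounded multiplication term coming from the jump formula. Each of these is $L^p$-bounded for $1<p<\infty$, and together they deliver the estimate cited from \cite{GuHaSpr2008,GuHaSpr2016}.

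For part (b), I would invoke the Coifman--McIntosh--Meyer theorem, which establishes $L^p$-boundedness of the Cauchy singular integral on Lipschitz graphs. A localization and partition-of-unity argument on $\partial\Omega$ lifts this to continuity of $S_{\partial\Omega}$ on $L^p(\partial\Omega,\H)$. The promotion to the Sobolev space $W^{1-1/p,p}(\partial\Omega,\H)$ then follows by combining this $L^p$-bound with the Plemelj--Sokhotski formulas \eqref{eq:formula_PS}, which write $S_{\partial\Omega}[\varphi]$ as a difference of boundary traces of the Cauchy operator $F_{\partial\Omega}[\varphi]$; together with the trace and extension theorems for Sobolev spaces this gives Mikhlin's result.

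For part (c), the kernel $1/(4\pi|\vec y-\vec x|)$ is the fundamental solution of $-\Delta$ in $\R^3$, so $M[\varphi]$ is harmonic in $\R^3\setminus\partial\Omega$ and distributionally solves $-\Delta M[\varphi]=\varphi\otimes\delta_{\partial\Omega}$. The $H^{-1/2}\to W^{1,2}$ bound is the classical energy estimate for layer potentials in Lipschitz domains, whose crucial ingredient is the $L^2$-bound of the non-tangential maximal function of $\nabla M[\varphi]$ by $\|\varphi\|_{H^{-1/2}(\partial\Omega,\H)}$, available in the reference. Part (d) then follows at once by composing (c) with the trace operator $\tr\colon W^{1,2}(\Omega,\H)\to H^{1/2}(\partial\Omega,\H)$ supplied by the Trace Theorem already discussed; this is precisely the route of \cite{Knu2002,McLean2000}.

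The main obstacle, at least conceptually, is part (b): passing from the $L^p$-boundedness of Cauchy-type singular integrals on Lipschitz boundaries to continuity on the fractional Sobolev space $W^{1-1/p,p}(\partial\Omega,\H)$ requires commutator estimates between $S_{\partial\Omega}$ and tangential derivatives, which are delicate in the Lipschitz setting. These are handled in the cited literature, so no new argument is required, but this is the step that carries the real analytic weight of the theorem.
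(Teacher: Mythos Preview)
The paper gives no proof of this theorem at all: the statement is presented as a compendium of known results, with the supporting references \cite{GuHaSpr2008,GuHaSpr2016,Mikh1986,Colton1992,Knu2002,McLean2000} embedded directly in items (a)--(d), and the text moves on immediately to the next subsection. Your proposal is therefore correct and strictly more informative than what the paper does; the sketches you give for each part are the standard arguments behind the cited results, and your identification of (b) as the analytically heaviest item is accurate.
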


\subsection{Components of the Cauchy and singular Cauchy integral
  operators\label{subsec:components_F,S}}

Following the notation of the decomposition used in \cite{DelPor2017}
we write
\begin{align}\label{eq:decomposition_T}
   T_{\Omega}[w_0+\vec w] = \Ti[\vec w] + \Tii[w_0]+\Tiii[\vec w],
\end{align}
where
\begin{align}\label{eq:T1_T2_T3}
  \Ti[\vec w](\vec x) &= 
   \int_{\Omega} E(\vec y-\vec x) \cdot \vec w(  \vec y) \,d\vec y, \nonumber\\
  \Tii[w_0](\vec x)    &= 
   -\int_{\Omega} w_0(\vec y) E(\vec y-\vec x)   \,d\vec y,  \nonumber\\
  \Tiii[\vec w](\vec x) &= 
   -\int_{\Omega} E(\vec y-\vec x) \times \vec  w(\vec y) \,d\vec y.
\end{align}

In a similar way we give a decomposition of the Cauchy operator 
\cite[Theorem 2.5.5]{GuSpr1990}
\begin{align}\label{eq:Cauchy-operator}  
 F_{\partial\Omega} \colon W^{1-1/p,p}(\partial\Omega,\H) 
	\longrightarrow  W^{1,p}(\Omega,\H)\cap \M(\Omega).	
\end{align} 
Let $\eta$ denote the unit normal vector to $\partial\Omega$.  For
real-valued functions $\varphi_0\in L^p(\partial\Omega,\R)$ we
decompose
$F_{\partial\Omega}[\varphi_0] = \Fi[\varphi_0] + \Fii[\varphi_0]$
into the normal and tangential components
\begin{align}\label{eq:F0_F1}
  \Fi[\varphi_0](\vec x) &= - \int_{\partial\Omega} E(\vec y-\vec x) 
	\cdot \eta(\vec y)\, \varphi_0(\vec y) \,ds_{\vec y}, \nonumber\\
  \Fii[\varphi_0](\vec x) &= \int_{\partial\Omega} E(\vec y-\vec x) 
	\times \eta   (\vec y)\, \varphi_0(\vec y) \,ds_{\vec y}
\end{align}
for $\vec x\in\R^3\setminus \partial\Omega$.  Analogously to
\cite[Proposition 3.2]{DelPor2017} for $T_\Omega$, the components of
$F_{\partial\Omega}$ can be expressed in terms of the single-layer
potential $M$ of \eqref{eq:single_layer_operator},
\begin{align}\label{eq:F0_F1-div-curl} 
  \Fi[\varphi_0]  =   \nabla \cdot M[\varphi_0 \eta],\quad
  \Fii[\varphi_0] = - \nabla \times M[\varphi_0\eta],
\end{align}
where $\Fii[\varphi_0]\in\Sol(\Omega,\R^3)$ and
$\curl \Fii[\varphi_0]\in\Irr(\Omega, \R^3)$. More generally, for
every $\varphi\in W^{1,p}(\Omega,\H)$ we have that
$F_{\partial\Omega}[\varphi] = - D M[\eta\varphi]$ (\cite[Proposition
2.5.3]{GuSpr1990}, note the change of sign).

Similarly, we can decompose $S_{\partial\Omega}=K_0+\vecK$, where the
component operators are
\begin{align}
   K_0[\varphi](\vec x) &= 2 \PV \int_{\partial\Omega}{ 
   -E(\vec y-\vec x) \cdot \eta(\vec y) \, \varphi(\vec y) \,ds_{\vec y
   }}, \nonumber\\
   \vecK[\varphi](\vec x) &=2 \PV \int_{\partial\Omega} E(\vec y
	 -\vec x) \times \eta(\vec y) \, \varphi(\vec y)  \,ds_{\vec y} 
    = \sum_{k=1}^3 e_iK_i[\varphi](\vec x), \label{eq:operadores_K_i}
\end{align}
with $K_i[\varphi]$ ($i=1,2,3$) having as integration kernel the $i$th
quaternionic component $[E(\vec y-\vec x) \times \eta(\vec
y)]_i$.
Note that $S_{\partial\Omega}$ is a right $\H$-linear operator, and in
particular for real-valued functions $\varphi_0$,
$\Sc S_{\partial\Omega}[\varphi_0]=K_0[\varphi_0]$ and
$\Vec S_{\partial\Omega}[\varphi_0] =\vecK[\varphi_0]$. We will
frequently use the fact that since a scalar constant $c_0\in\R$ is
monogenic,  $S_{\partial\Omega}[c_0]=c_0$, so
\begin{equation}  \label{eq:Kconstant}
 K_0[c_0] = c_0,\quad \vecK[c_0]=0. 
\end{equation}

The operators $K_0$ and $\vecK$ \eqref{eq:operadores_K_i} acting on
$L^p(\partial\Omega,\H)$ and $L^p(\partial\Omega,\R^3)$ respectively
have as adjoints
\begin{align*}
   K_0^*[\varphi](\vec x) &= 2 \PV \int_{\partial\Omega}{ 
   E(\vec y-\vec x) \cdot \eta(\vec x) \, \varphi(\vec y) \,ds_{\vec y
   }}, \nonumber\\
   \vecK^*[\vec \varphi](\vec x) &=2 \PV \int_{\partial\Omega} 
	 -E(\vec y-\vec x) \times \eta(\vec x) \cdot \vec\varphi(\vec y)  
	 \,ds_{\vec y}=\sum_{i=1}^3{K_i^*[\varphi_i](\vec x)},
\end{align*}
on $L^q(\partial\Omega,\H)$ and $L^q(\partial\Omega,\R^3)$,
respectively, where the duality pairing of
$\H$-valued functions is $\Sc \int_{\partial\Omega} \overline{\varphi(\vec y)} \psi(\vec y)\,ds_{\vec y}$ and $1/p+1/q=1$.

Let $A$ denote the boundary averaging operator
\begin{align}\label{eq:averaging-operator}
   A[\varphi]= \frac{1}{\sigma_\Omega}\int_{\partial\Omega}
		{\varphi(\vec y) \, ds_{\vec y}}
\end{align}
(with $\sigma_\Omega$ chosen so that $A[1]=1$), which induces a
natural mapping $I-A$ from $L^p(\partial\Omega,\H)$ to
$L_0^p(\partial\Omega,\H)$, where $L_0^p(\cdot)$ is the
subspace of functions in $L^p(\cdot)$ with mean $0$. 

The operator $K_0$ has been thoroughly studied due to its importance
in solving the Dirichlet Problem, and has very good properties
\cite{DahKen1987,Kenig1994}; for example on a $C^{1,\gamma}$ $(\gamma>0)$ domain
\begin{align*}
    \left|E(\vec y-\vec x) \cdot \eta(\vec y)\right| 
    \leq \frac{C}{|\vec y-\vec x|^{2-\gamma}},
\end{align*}
and thus $K_0$ is a compact operator from $L^p(\partial\Omega)$ to
itself $(1<p<\infty)$. Likewise $\vecK$ is bounded from
$L^p(\partial\Omega,\R)$ to itself and from
$W^{1-1/p,p}(\partial\Omega,\R)$ to itself, because
$S_{\partial\Omega}$ is bounded in $L^p(\partial\Omega,\H)$
\cite[Theorem 2.5.8]{GuSpr1990} and in
$W^{1-1/p,p}(\partial\Omega,\H)$ (see Theorem
\ref{theo:T_acotado}(b)), respectively. We will always assume that
the complement of $\Omega$ is connected.  When $\Omega$ is a bounded
Lipschitz domain, although Fredholm theory is not applicable, it is
possible to verify the invertibility of $I+K_0$. We summarize here the
results on self mappings that we will need.

\begin{proposicion}\cite{DahKen1987,Kenig1994}
\label{prop:invertibility-I+K_0}
There is $\epsilon(\Omega)$, depending only on the Lipschitz character of $\partial\Omega$, such that
\begin{enumerate}
\item[(a)] If $\partial\Omega$ is Lipschitz, $2-\epsilon(\Omega)<p<
\infty$, then $I+K_0$ is invertible on $L^p(\partial\Omega)$ with
bounded inverse.
	 \item[(b)] If $\partial\Omega$ is Lipschitz, $1<p<
2+\epsilon(\Omega)$, then $I+K_0$ is invertible on $W^{1-1/p,p}
(\partial\Omega)$ with bounded inverse.
	 \item[(c)] If $\Omega$ is $C^{1,\gamma}$ Lipschitz for some
$\gamma >0$, $1<p<\infty$, then $I+K_0$ is invertible both on $L^p(
	 \partial\Omega)$ and $W^{1-1/p,p}(\partial\Omega)$ with
bounded inverse.
	 \item[(d)] If $\partial\Omega$ is Lipschitz, $1<q<
2+\epsilon(\Omega)$, or $C^{1,\gamma}$ and $1<q<\infty$, then
$I-K_0^*$ is invertible on $L_0^q(\partial\Omega)$ with bounded
inverse.
\end{enumerate}
\end{proposicion}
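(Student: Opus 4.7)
The plan is to reduce the proposition to the classical Verchota--Dahlberg--Kenig--Jerison invertibility theory for the harmonic double-layer potential on Lipschitz domains. The key observation is that the kernel of $K_0$ is, up to sign and a factor of two, the standard double-layer kernel: using
\[
-E(\vec y-\vec x)\cdot\eta(\vec y) \;=\; \frac{(\vec y-\vec x)\cdot\eta(\vec y)}{4\pi|\vec y-\vec x|^3} \;=\; -\frac{\partial}{\partial\eta_{\vec y}}\frac{1}{4\pi|\vec y-\vec x|},
\]
one identifies $K_0=-2\mathcal{D}$ and $K_0^*=-2\mathcal{D}^*$, where $\mathcal{D}$ is the principal-value harmonic double-layer operator and $\mathcal{D}^*$ its $L^2$-adjoint (the direct-value normal derivative of the single-layer potential). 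Consequently
\[
I+K_0 \;=\; 2\bigl(\tfrac12 I - \mathcal{D}\bigr), \qquad I-K_0^* \;=\; 2\bigl(\tfrac12 I + \mathcal{D}^*\bigr),
\]
and the four parts of the proposition become a catalogue of standard invertibility results.

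Parts (a), (b), and (d) then follow from the literature: (a) is the $L^p$ invertibility of $\tfrac12 I-\mathcal{D}$ on Lipschitz boundaries, proved by Verchota for $p=2$ and extended to $2-\epsilon(\Omega)<p<\infty$ in \cite{DahKen1987}; (b) is the analogous Regularity Problem statement on the trace space $W^{1-1/p,p}(\partial\Omega)$ in the dual range $1<p<2+\epsilon(\Omega)$, as surveyed in \cite{Kenig1994}; and (d) is the Neumann side, where the restriction to $L^q_0(\partial\Omega)$ is forced because $(\tfrac12 I+\mathcal{D})[1]=0$, so that the range of $\tfrac12 I+\mathcal{D}^*$ is necessarily contained in $L^q_0$, and on that subspace the operator is invertible in the stated range (injectivity using the standing hypothesis that $\Omega^-$ is connected). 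For (c), the bound $|E(\vec y-\vec x)\cdot\eta(\vec y)|\le C|\vec y-\vec x|^{\gamma-2}$ quoted just before the statement makes $K_0$ compact on both $L^p(\partial\Omega)$ and $W^{1-1/p,p}(\partial\Omega)$, so $I+K_0$ is Fredholm of index zero and it suffices to check injectivity. If $(I+K_0)\varphi=0$, then the Plemelj--Sokhotski formula \eqref{eq:formula_PS} gives $\tr_+\Fi[\varphi]=0$; uniqueness of the interior Dirichlet problem yields $\Fi[\varphi]\equiv 0$ in $\Omega$, continuity of the normal derivative of the double-layer across a $C^{1,\gamma}$ boundary transfers this to vanishing exterior Neumann data, uniqueness of the exterior Neumann problem together with decay at infinity gives $\Fi[\varphi]\equiv 0$ in $\Omega^-$, and the scalar jump $\tr_+\Fi[\varphi]-\tr_-\Fi[\varphi]=\varphi$ forces $\varphi=0$.

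The main obstacle is not any single hard step but rather careful bookkeeping: tracking signs and normalization constants through the identification $K_0\leftrightarrow\mathcal{D}$, respecting the delicate Lipschitz $p$-ranges controlled by $\epsilon(\Omega)$ and not confusing them between the Dirichlet and Neumann sides, and in (d) correctly attributing the mean-zero restriction to the one-dimensional kernel of $\tfrac12 I+\mathcal{D}$ generated by the constants. All of the substantive hard analysis on Lipschitz boundaries, including the Rellich-identity proof of the basic $L^2$ estimates and the perturbation argument that produces $\epsilon(\Omega)$, is imported as a black box from \cite{DahKen1987,Kenig1994}.
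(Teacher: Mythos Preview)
Your proposal is correct and follows essentially the same strategy as the paper: identify $K_0$ with (twice) the harmonic double-layer boundary operator and invoke the Verchota--Dahlberg--Kenig results for (a), (b), (d). The only substantive difference is in emphasis. For (b) the paper does not simply cite the Regularity Problem but spells out the intertwining $I+K_0=M(I+K_0^*)M^{-1}$ from Verchota and then uses the $L^p\to W^{1-1/p,p}$ mapping properties of the single-layer potential $M$ together with the $L^p$ invertibility of $I+K_0^*$ to transfer boundedness of the inverse to the trace space; this makes explicit why the $L^p$ result of (a) alone is \emph{not} sufficient. Conversely, for (c) the paper merely cites \cite{Kenig1994}, whereas you supply a self-contained compactness-plus-injectivity argument via the interior Dirichlet/exterior Neumann uniqueness and the jump relation; this is a legitimate and slightly more informative route, though you should note that compactness of $K_0$ on $W^{1-1/p,p}(\partial\Omega)$ (not only on $L^p$) requires a word beyond the pointwise kernel estimate.
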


\begin{proof}
Part (a) was established in \cite[Theorem 3.1]{Ver1984}
for $p=2$, and then in \cite[Theorem 4.17]{DahKen1987} it was extended
for $2-\epsilon(\Omega)<p<\infty$.

To prove (b), let $1<p<2+\epsilon(\Omega)$. In the proof of
\cite[Theorem 3.3]{Ver1984} it is shown that
$I+K_0=M(I+K_0^*)M^{-1}$. We have noted previously that $I+K_0$ is
bounded on $L^p(\partial\Omega)$ and has a bounded inverse. This fact
is not sufficient for our purpose, but by the same reference
\cite[Theorems 4.17, 4.18]{DahKen1987}, the single layer potential
$M\colon L^p(\partial\Omega)\to W^{1-1/p,p}(\partial\Omega)$ and
$I+K_0^*$ are bounded and have bounded inverses. From this follows the
boundedness of $(I+K_0)^{-1}$ in $W^{1-1/p,p}(\partial\Omega)$.

Parts (c) and (d) were stated in \cite[p.\ 52]{Kenig1994} and
\cite[Theorem 4.17]{DahKen1987}, respectively (see also \cite[Theorem 3.3]{Ver1984}
for $q=2$).
\end{proof}

\begin{proposicion}\label{prop:characterization-trT0-trT2}
 On the Sobolev space $W^{1,2}(\Omega,\R)$,
\[ 2\tr\Ti\circ\nabla=(I-K_0)\circ\tr, \quad
   2\tr\Tiii\circ\nabla=-\vecK\circ\tr.
\] 
\end{proposicion}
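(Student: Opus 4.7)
The plan is to derive both identities as the scalar and vector parts of a single application of the Borel-Pompeiu formula \eqref{eq:formula_BP} to a real-valued test function, followed by the Plemelj-Sokhotski jump relations.

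Let $\varphi_0\in W^{1,2}(\Omega,\R)$. Since $\varphi_0$ is scalar-valued, the Moisil-Teodorescu operator acts on it as $D\varphi_0=\nabla\varphi_0$, which is a pure vector field. First I would apply the Borel-Pompeiu formula \eqref{eq:formula_BP} to $w=\varphi_0$, obtaining
\[
   T_\Omega[\nabla\varphi_0]+F_{\partial\Omega}[\tr\varphi_0]=\varphi_0 \quad\text{in }\Omega.
\]
Using the decomposition \eqref{eq:decomposition_T} with $w_0=0$ and $\vec w=\nabla\varphi_0$, together with the decomposition $F_{\partial\Omega}[\varphi_0]=\Fi[\varphi_0]+\Fii[\varphi_0]$ from \eqref{eq:F0_F1}, I would split this into scalar and vector parts:
\[
 \Ti[\nabla\varphi_0]+\Fi[\varphi_0]=\varphi_0,\qquad
 \Tiii[\nabla\varphi_0]+\Fii[\varphi_0]=0.
\]

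Next I would take the non-tangential boundary limit $\tr_+$ of both identities. By the Plemelj-Sokhotski formula \eqref{eq:formula_PS} applied to the real-valued datum $\varphi_0$, together with the fact that $S_{\partial\Omega}[\varphi_0]=K_0[\varphi_0]+\vecK[\varphi_0]$ in its scalar/vector decomposition, one gets
\[
 \tr_+\Fi[\varphi_0]=\tfrac{1}{2}(\tr\varphi_0+K_0[\tr\varphi_0]),\qquad
 \tr_+\Fii[\varphi_0]=\tfrac{1}{2}\vecK[\tr\varphi_0].
\]
Substituting these into the two traced equations and rearranging yields exactly
\[
 2\tr\Ti[\nabla\varphi_0]=(I-K_0)[\tr\varphi_0],\qquad
 2\tr\Tiii[\nabla\varphi_0]=-\vecK[\tr\varphi_0],
\]
which are the asserted operator identities on $W^{1,2}(\Omega,\R)$.

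The only non-routine point is justifying that every term in the computation makes sense and is continuous on $W^{1,2}(\Omega,\R)$. For $\varphi_0\in W^{1,2}$ one has $\nabla\varphi_0\in L^2(\Omega,\R^3)$, so by Theorem \ref{theo:T_acotado}(a) the Teodorescu transform $T_\Omega[\nabla\varphi_0]$ lies in $W^{1,2}(\Omega,\H)$ and hence has a well-defined trace in $H^{1/2}(\partial\Omega,\H)$; similarly $\tr\varphi_0\in H^{1/2}(\partial\Omega,\R)$ and \eqref{eq:Cauchy-operator} gives $F_{\partial\Omega}[\tr\varphi_0]\in W^{1,2}(\Omega,\H)\cap\M(\Omega)$, so its boundary trace exists and the Plemelj-Sokhotski formula applies in this Sobolev setting by \cite[Remark 2.5.11]{GuSpr1990}. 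Thus both sides of the claimed identities are bounded operators from $W^{1,2}(\Omega,\R)$ to $H^{1/2}(\partial\Omega,\H)$, and the identities themselves follow by equating scalar and vector parts as above.
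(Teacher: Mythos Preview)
Your proof is correct and follows essentially the same route as the paper: apply the Borel--Pompeiu formula \eqref{eq:formula_BP} to a scalar $W^{1,2}$ function, pass to the boundary via the Plemelj--Sokhotski relations \eqref{eq:formula_PS}, and separate scalar and vector parts. The paper takes the trace first and then splits, while you split first and then trace, but this is cosmetic; your added paragraph justifying the Sobolev regularity of each term is more explicit than the paper's version. One small notational slip: after introducing $\varphi_0\in W^{1,2}(\Omega,\R)$ you write $\Fi[\varphi_0]$, $\Fii[\varphi_0]$ where strictly it should be $\Fi[\tr\varphi_0]$, $\Fii[\tr\varphi_0]$, since those operators act on boundary data.
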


\begin{proof}
  Let $w_0\in W^{1,2}(\Omega,\R)$, $\varphi_0=\tr w_0$.  Apply
  \eqref{eq:formula_BP} to $w_0$ and take the trace, and then apply
  \eqref{eq:formula_PS}:
\begin{align*} 
 \tr T_\Omega[\nabla w_0] =&  \varphi_0-\tr F_{\partial\Omega}[\varphi_0]
   =   \varphi_0-\frac{1}{2}(\varphi_0 + S_{\partial\Omega}[\varphi_0]) \\
    =& \frac{1}{2}(I - K_0 - \vecK)[\varphi_0].   
  \end{align*}
Now take the scalar and vector parts.
\end{proof}

\section{Hilbert transform for monogenic
  functions}\label{sec:Hilbert-monogenic}

Before entering on the investigation of the Vekua equation in domains
in $\R^3$, we begin our study of the Hilbert transform in the much
simpler case of monogenic functions of three variables. This refers to
a linear operator which produces the boundary values of the vector
part of a monogenic function, given the boundary values of the scalar
part, thus generalizing the classical operator defined by D.\ Hilbert
for the unit disk or upper half plane in $\C$. This problem has been
studied in the context of Clifford algebras for the unit sphere in
$\R^n$ in \cite{Qian2009,Brac2006-2} and for $k$-forms in Lipschitz
domains in \cite{AKQ2009}.
 
\subsection{Definition of $\Hi$}\label{subsec:Tao-Qian-paper}

From now on $\Omega$ will be a $C^{1,\gamma}$ bounded Lipschitz domain
with connected boundary, $\gamma>0$ and $1<p<\infty$ or $\Omega$ will
be a bounded Lipschitz domain $2-\epsilon(\Omega)<p<\infty$ (unless
another range of $p$ be specified).  Then the operators $K_0$, $\vecK$
and $(I+K_0)^{-1}$ are all bounded from $L^p(\partial\Omega)$ to
$L^p(\partial\Omega)$.

We recall the construction which was given in \cite{Qian2008,Qian2009}
for bounded Lipschitz domains and for the unit ball in $\R^n$.
Specifically when $n=3$, the \textit{Hilbert transform}
\[ \Hi\colon L^p(\partial\Omega,\R)\to L^p(\partial\Omega,\R^3)\]
is defined as
\begin{align}\label{eq:Hilbert_transform}
  \Hi[\varphi_0]  =\vecK(I+K_0)^{-1}\varphi_0 =\frac{1}{2}\vecK[h_0]
\end{align}
with $K_0$, $\vecK$ given in \eqref{eq:operadores_K_i}, and
$h_0=2(I+K_0)^{-1}\varphi_0$.  By the Plemelj-Sokhotski formula
\eqref{eq:formula_PS}, the non-tangential boundary limits of
$F_{\partial\Omega}[h_0]$ exist, and since $h_0$ is $\R$-valued, for
$\vec x\in \partial\Omega$ we have
\begin{align}
  \tr_+ F_{\partial\Omega}[h_0](\vec x)&=
 \frac{1}{2}\left(h_0(\vec x)+\Sc(S_{\partial\Omega}[h_0])(\vec x)\right)
  +\frac{1}{2}\Vec(S_{\partial\Omega}[h_0])(\vec x) \nonumber\\
   &=\frac{1}{2}(I+K_0)h_0(\vec x)+\Hi[\varphi_0](\vec x) \nonumber\\
  &=\left(\varphi_0+\Hi[\varphi_0]\right)(\vec x).\label{eq:u+Hu}
\end{align}
Thus $\varphi_0+\Hi[\varphi_0]$ is the boundary value of the monogenic
function $F_{\partial\Omega}[h_0]$ in $\Omega$, which justifies
calling $\Hi$ a Hilbert transform. The image of the Hilbert transform
$\Hi$ belongs to the space of boundary functions whose harmonic
extension is divergence free because from
\eqref{eq:monogenicas_izquierda} and the construction
\eqref{eq:Hilbert_transform}, the vector part of the monogenic
extension
$W=F_{\partial\Omega}[h_0]=F_{\partial\Omega}[2(I+K_0)^{-1}\varphi_0]$
satisfies $\div \vec W=0$.

From Proposition \ref{prop:characterization-trT0-trT2} observe that the
identity $2\Hi[\varphi_0]=\Hi[(I+K_0)\varphi_0]+\Hi[(I-K_0)\varphi_0]$
can now be expressed as
\begin{equation}  \label{eq:HT2T0}
  \Hi[\varphi_0] =  -\tr \Tiii[\nabla w_0] + \Hi[\tr \Ti[\nabla w_0] ].
\end{equation}
 
\subsection{Properties of $\Hi$ and its adjoint and
  inverse}\label{subsec:inverseHf}

We derive some basic facts of the Hilbert transform $\Hi$, as well as
for the adjoint and a left inverse of $\Hi$. At the end of this
subsection we will see that $\Hi$ belongs to the class of
semi-Fredholm operators.

The Hilbert operator $\Hi$ is a bounded and non-compact operator in
the $L^p$ norm. The boundedness was proved for the ball in \cite[Theorem
6]{Qian2009} and for Lipschitz domains in \cite[Theorem
3.2]{Qian2008}. If $\Hi$ were compact, then $\vecK$ would also be
compact, since $I+K_0$ is bounded on $L^p(\partial\Omega,\R)$. But
since $K_0$ is compact \cite[Cor.\ 2.2.14]{Kenig1994} on $C^1$
domains, $S_{\partial\Omega}$ would then be compact by the
decomposition \eqref{eq:operadores_K_i}, and then
$S_{\partial\Omega }^2=I$ would also be compact, which is absurd.

When we restrict the domain of the Hilbert transform $\Hi$ to
Sobolev space,
the property of boundedness is preserved.  Recall the value
$\epsilon(\Omega)$ discussed in Proposition 
\ref{prop:invertibility-I+K_0}.

\begin{teorema}\label{theo:H_bounded_noncompact}
Let $\Omega$ be a bounded Lipschitz domain. The restriction 
\begin{align*}
	 \Hi \colon W^{1-1/p,p}(\partial\Omega,\R)\to W^{1-1/p,p}(
	 \partial\Omega,\R^3),
\end{align*}  
of the Hilbert transform $\Hi$ is a bounded operator when
$1<p<2+\epsilon(\Omega)$, and also when $1<p<\infty$ and $\Omega$ is a
$C^{1,\gamma}$ Lipschitz domain, $\gamma>0$.
\end{teorema}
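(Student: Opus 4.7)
The plan is to exploit the factorization
\[
   \Hi = \vecK \circ (I+K_0)^{-1}
\]
from \eqref{eq:Hilbert_transform} and to verify that each factor is a bounded self-map (or appropriate map) of the relevant Sobolev space, so that $\Hi$ is bounded by composition. There are no compactness, interpolation, or fine regularity arguments needed; everything has already been assembled in the preliminaries.

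First, the invertibility of $I+K_0$ on $W^{1-1/p,p}(\partial\Omega,\R)$ with bounded inverse is exactly Proposition \ref{prop:invertibility-I+K_0}(b) in the Lipschitz range $1<p<2+\epsilon(\Omega)$, and Proposition \ref{prop:invertibility-I+K_0}(c) in the $C^{1,\gamma}$ range $1<p<\infty$. Thus
\[
  (I+K_0)^{-1}\colon W^{1-1/p,p}(\partial\Omega,\R)\to
                    W^{1-1/p,p}(\partial\Omega,\R),
\]
is bounded in both cases contemplated by the statement.

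Second, the boundedness of $\vecK$ from $W^{1-1/p,p}(\partial\Omega,\R)$ to $W^{1-1/p,p}(\partial\Omega,\R^3)$ is inherited from that of $S_{\partial\Omega}$ on $W^{1-1/p,p}(\partial\Omega,\H)$, which is Theorem \ref{theo:T_acotado}(b). Indeed, for a real-valued $\varphi_0\in W^{1-1/p,p}(\partial\Omega,\R)\subseteq W^{1-1/p,p}(\partial\Omega,\H)$, the decomposition $S_{\partial\Omega}=K_0+\vecK$ together with the right $\H$-linearity of $S_{\partial\Omega}$ yields $\vecK[\varphi_0]=\Vec S_{\partial\Omega}[\varphi_0]$, so projecting onto the vector part preserves membership in $W^{1-1/p,p}$ with control of norms. (Alternatively, Proposition \ref{prop:invertibility-I+K_0} gives the boundedness of $K_0$, whence $\vecK=S_{\partial\Omega}-K_0$ is bounded as well.)

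Composing the two bounded maps then gives the boundedness of $\Hi$ on the Sobolev spaces with the asserted norm estimate. The only real content is to be careful about which parts of Proposition \ref{prop:invertibility-I+K_0} cover which range of $p$; beyond that bookkeeping, the argument is a one-line composition and I do not anticipate any substantive obstacle.
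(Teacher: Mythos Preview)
Your proposal is correct and matches the paper's own proof essentially line for line: the paper simply observes that $\vecK$ is bounded (as noted just before the theorem, via Theorem~\ref{theo:T_acotado}(b)) and then invokes Proposition~\ref{prop:invertibility-I+K_0}(b),(c) together with the factorization~\eqref{eq:Hilbert_transform}. There is nothing to add or correct.
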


\begin{proof}
  We have noted that $\vec K$ is bounded, so the statement follows
  from \eqref{eq:Hilbert_transform} and Proposition
  \ref{prop:invertibility-I+K_0}, parts (b) and (c).
\end{proof}
  
From this it is straightforward to obtain the explicit form of the
adjoint of $\Hi$. Write
$\epsilon^{\pm}(\Omega)=(2\pm \epsilon(\Omega))/(1\pm
\epsilon(\Omega))$.

\begin{proposicion}\label{prop:H_bounded_Lipschitz}
  Let $\Omega$ be a bounded Lipschitz domain. Then the adjoint 
	$\Hi^* \colon L^q(\partial\Omega, \R^3)\to L^q(\partial\Omega, \R)$ 
\begin{align}\label{eq:adjoint-Hilbert-transform}
    \Hi^*[\vec \varphi] = (I+K_0^*)^{-1}\vecK^*[\vec \varphi] 
\end{align}
is bounded on $W^{1-1/q,q}(\partial\Omega)$ for
$\epsilon^+(\Omega)<q<\infty$ and on $L^p(\partial\Omega)$ for
$1<q<\epsilon^-(\Omega)$. When $\Omega$ has $C^{1,\gamma}$ boundary,
$\gamma>0$,
$\Hi^* \colon L^q(\partial\Omega, \R^3)\to L^q(\partial\Omega, \R)$ is
bounded for $1<q<\infty$.
\end{proposicion}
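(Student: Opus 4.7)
The plan is to read off the formula \eqref{eq:adjoint-Hilbert-transform} by a direct adjoint calculation, and then verify the boundedness claims by combining the mapping properties of $\vecK^*$ with the invertibility of $I+K_0^*$ on the appropriate dual spaces furnished by Proposition \ref{prop:invertibility-I+K_0}. Because \eqref{eq:Hilbert_transform} factors the Hilbert transform as $\Hi=\vecK\circ(I+K_0)^{-1}$, and adjunction reverses composition and commutes with inversion, one gets formally
\[
   \Hi^* = \bigl((I+K_0)^{-1}\bigr)^* \circ \vecK^*
         = (I+K_0^*)^{-1}\circ \vecK^*,
\]
which is precisely \eqref{eq:adjoint-Hilbert-transform}. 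To make this rigorous one has to check that each factor is defined and bounded on the claimed target space; this is where the two separate index ranges enter.

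For the $L^q$ statement with $1<q<\epsilon^-(\Omega)$, I would invoke Proposition \ref{prop:invertibility-I+K_0}(a), which gives $I+K_0$ invertible on $L^p(\partial\Omega)$ for $2-\epsilon(\Omega)<p<\infty$. Standard Banach-space duality (with respect to the $L^2$ pairing used in the statement) then yields invertibility of $I+K_0^*$ on the dual space $L^q(\partial\Omega)$ with $q=(2-\epsilon(\Omega))/(1-\epsilon(\Omega))=\epsilon^-(\Omega)$ as the endpoint, i.e.\ precisely for $1<q<\epsilon^-(\Omega)$. Since $\vecK^*$ is a principal-value singular integral of Calder\'on--Zygmund type on the Lipschitz boundary---formally the $L^2$-adjoint of $\vecK$, or equivalently a component of $S_{\partial\Omega}^*$---its $L^q$-boundedness is on the same footing as that of $\vecK$ and holds for $1<q<\infty$. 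Composing gives the desired boundedness of $\Hi^*$ on $L^q(\partial\Omega,\R^3)\to L^q(\partial\Omega,\R)$.

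For the $W^{1-1/q,q}$ statement with $\epsilon^+(\Omega)<q<\infty$, the naive $L^2$-dualization would land in a negative Sobolev space, so I would instead transport the invertibility through the intertwining $I+K_0=M(I+K_0^*)M^{-1}$ that appeared in the proof of Proposition \ref{prop:invertibility-I+K_0}(b). Combining the bounded invertibility of $M\colon L^p(\partial\Omega)\to W^{1-1/p,p}(\partial\Omega)$ with the invertibility of $I+K_0$ on $W^{1-1/p,p}(\partial\Omega)$ for $1<p<2+\epsilon(\Omega)$ (part (b)), one obtains $(I+K_0^*)^{-1}$ bounded on $L^p(\partial\Omega)$, and then dualizing back in the proper pairing puts $(I+K_0^*)^{-1}$ on $W^{1-1/q,q}(\partial\Omega)$ for the conjugate range $(2+\epsilon(\Omega))/(1+\epsilon(\Omega))=\epsilon^+(\Omega)<q<\infty$. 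Paired with Sobolev-scale boundedness of $\vecK^*$ (Theorem \ref{theo:T_acotado}(b) applied to $S_{\partial\Omega}$ and its adjoint) the composition lands where claimed. The $C^{1,\gamma}$ case is immediate from Proposition \ref{prop:invertibility-I+K_0}(c), which removes all index restrictions on both scales simultaneously.

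The principal technical obstacle is aligning the duality for the $W^{1-1/q,q}$ statement, since $W^{1-1/q,q}$ is \emph{not} the $L^2$-dual of $W^{1-1/p,p}$; one must route the argument through the single-layer intertwining to arrive at the correct positive-order Sobolev scale, and then keep careful track of which endpoint of the Lipschitz range $(2-\epsilon(\Omega),2+\epsilon(\Omega))$ maps to which of $\epsilon^\pm(\Omega)$. Once this bookkeeping is done the rest is formal.
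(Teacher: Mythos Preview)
The paper offers no separate proof of this proposition: it prefaces the statement with ``From this it is straightforward to obtain the explicit form of the adjoint of $\Hi$'' and leaves it at that, so the implicit argument is simply to take adjoints in Theorem~\ref{theo:H_bounded_noncompact} and in the $L^p$ boundedness recorded at the start of \S\ref{subsec:Tao-Qian-paper}. Your derivation of the formula $\Hi^*=(I+K_0^*)^{-1}\vecK^*$ via $(AB)^*=B^*A^*$ and $(A^{-1})^*=(A^*)^{-1}$, and your $L^q$ argument for $1<q<\epsilon^-(\Omega)$ by dualizing Proposition~\ref{prop:invertibility-I+K_0}(a), are exactly what the paper has in mind and are correct.

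Your $W^{1-1/q,q}$ argument, however, has a genuine gap at the step you label ``dualizing back in the proper pairing.'' You correctly obtain $(I+K_0^*)^{-1}$ bounded on $L^p(\partial\Omega)$ for $1<p<2+\epsilon(\Omega)$ from the intertwining $I+K_0^*=M^{-1}(I+K_0)M$. But taking the adjoint of this $L^p$ statement yields $(I+K_0)^{-1}$ on $L^q$ for $\epsilon^+(\Omega)<q<\infty$, not $(I+K_0^*)^{-1}$ on $W^{1-1/q,q}$; there is no standard pairing that identifies the dual of $L^p$ with a positive-order Sobolev space, so the move does not land where you claim. You yourself flag this as the principal technical obstacle, and the resolution you sketch does not close it. In fairness, the paper supplies no argument for this case either, and the Sobolev claim does not follow from naive duality of Theorem~\ref{theo:H_bounded_noncompact} for the reason you note; a direct argument would require mapping properties of $I+K_0^*$ on $W^{1-1/q,q}$ beyond what Proposition~\ref{prop:invertibility-I+K_0} records.
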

 
We now discuss the invertibility of $\Hi$. The identity
$S_{\partial\Omega}^2=I$ combined with \eqref{eq:operadores_K_i}, when
applied to real-valued functions, produces the identities 
\begin{align}\label{eq:sistema_involucion}
    I-K_0^2=-\sum_{i=1}^3{K_i^2}, 
\end{align}
and $K_0K_i+K_iK_0+K_jK_k-K_kK_j=0$ for
$(i,j,k)=(1,2,3),\ (2,3,1),\ (3,1,2)$.  The equation
\eqref{eq:sistema_involucion} will be particularly useful; the last
three play a similar role to the commutative relations enjoyed by the
Riesz transforms $R_i$ $(i=1,2,3)$ in a half space of $\R^3$ \cite[p.\
91]{KravShap1996}.

 In \cite{Qian2008, Qian2009} reference is
made to the inverses of $I\pm K_0$ (see also \cite{Kenig1994}).
However, we observe the following.

\begin{proposicion}\label{prop:invertible}
  Let $\Omega$ be Lipschitz and $\epsilon^+(\Omega)<p<\infty$ or
  $C^{1,\gamma}$ Lipschitz and $1<p<\infty$. Then $\Ker(I-K_0)= \R$ on
  $L^p(\partial\Omega)$.
\end{proposicion}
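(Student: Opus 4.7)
The inclusion $\R \subseteq \Ker(I - K_0)$ is immediate from \eqref{eq:Kconstant}: any constant $c_0\in\R$ is monogenic, so $S_{\partial\Omega}[c_0]=c_0$, and taking scalar parts gives $K_0[c_0]=c_0$. The content of the proposition is thus the reverse inclusion: if $(I-K_0)\varphi_0 = 0$ then $\varphi_0$ must be constant.

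My plan is to deduce this by duality, leveraging Proposition \ref{prop:invertibility-I+K_0}(d). The first step is to observe that $I-K_0^*$ always maps $L^q(\partial\Omega)$ into the mean-zero subspace $L_0^q(\partial\Omega)$. Indeed, using the $L^p$--$L^q$ pairing together with $K_0[1]=1$,
\[
 \int_{\partial\Omega} K_0^*[\psi]\, ds \;=\; \int_{\partial\Omega} \psi\cdot K_0[1]\, ds \;=\; \int_{\partial\Omega} \psi\, ds,
\]
so $\int_{\partial\Omega}(I-K_0^*)\psi\, ds = 0$ for every $\psi\in L^q(\partial\Omega)$.

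Next, let $q = p/(p-1)$ be the H\"older conjugate of $p$. The defining formula $\epsilon^+(\Omega)=(2+\epsilon(\Omega))/(1+\epsilon(\Omega))$ is arranged precisely so that H\"older conjugation converts the hypothesis ``$\epsilon^+(\Omega)<p<\infty$'' into ``$1<q<2+\epsilon(\Omega)$'' in the Lipschitz case, and of course ``$1<p<\infty$'' corresponds to ``$1<q<\infty$'' in the $C^{1,\gamma}$ case. In either situation Proposition \ref{prop:invertibility-I+K_0}(d) gives the invertibility of $I-K_0^*$ on $L_0^q(\partial\Omega)$, so combined with the previous step, the range of $I-K_0^*\colon L^q(\partial\Omega)\to L^q(\partial\Omega)$ is exactly the closed subspace $L_0^q(\partial\Omega)$.

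Finally I would invoke the standard Banach-space duality identity
\[
 \Ker(I-K_0) \;=\; \bigl(\operatorname{Range}(I-K_0^*)\bigr)^{\perp},
\]
where the annihilator is taken with respect to the pairing $(\varphi,\psi)\mapsto\int_{\partial\Omega}\varphi\psi\,ds$. A function $\varphi_0\in L^p(\partial\Omega)$ annihilates every mean-zero $\psi\in L_0^q(\partial\Omega)$ if and only if it is constant, so the annihilator equals $\R$ and the proposition follows. The only delicate point is the index bookkeeping connecting $p$ and $q$; as noted above, the definition of $\epsilon^+(\Omega)$ is tailored to make exactly this bookkeeping automatic.
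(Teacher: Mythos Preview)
Your proof is correct and follows essentially the same duality strategy as the paper: both arguments rest on Proposition~\ref{prop:invertibility-I+K_0}(d) together with $K_0[1]=1$, and the index bookkeeping $p>\epsilon^+(\Omega)\iff q<2+\epsilon(\Omega)$ is exactly what the paper uses. The only difference is packaging: the paper invokes the Closed Range Theorem to identify $\Im(I-K_0)=L_0^p(\partial\Omega)$ and then splits a general $g$ via the averaging operator $A$, whereas you go directly through the annihilator identity $\Ker(I-K_0)=\bigl(\operatorname{Range}(I-K_0^*)\bigr)^{\perp}$, which is a little more streamlined.
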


\begin{proof}
  Let $c_0\in\R$. Then \eqref{eq:Kconstant} shows that
  $c_0\in\Ker(I-K_0)$. We now verify that the only elements of
  $\Ker(I-K_0)$ are constants. Since the adjoint $I-K_0^*$ is
  invertible in $L_0^q(\partial\Omega)$ by Proposition 
	\ref{prop:invertibility-I+K_0}(d), it follows from the Banach 
	Closed Range Theorem that the image is $\Im{(I-K_0)}=L_0^p(\partial\Omega)$.  Thus
  $\Ker(I-K_0)|_{L_0^p(\partial\Omega)}=\{0\}$. Finally, let
  $g\in L^p(\partial\Omega)$ such that $g\in \Ker (I-K_0)$. Let
  $f=(I-A)g$, where $A$ is the boundary averaging operator 
	\eqref{eq:averaging-operator}. Then by \eqref{eq:Kconstant},
\begin{align*}
   (I-K_0)f = f - (K_0[g]-K_0[Ag]) = f - (g-Ag) =0.
\end{align*}
Since $f\in L_0^p(\partial\Omega,\R)$, we have $f=0$; that is,
$g=A[g]\in \R$.
\end{proof}

Note also that $K_0$ does not interfere with the averaging process:
$AK_0[\varphi_0]=A[\varphi_0]$, because
$2 \PV \int_{\partial\Omega}{E(\vec y-\vec x)\cdot \eta(\vec x) \, ds_{\vec y}}=1$.
For this reason and by Proposition \ref{prop:invertible}, the operator $I-K_0$ sends $L_0^p(\partial\Omega,\R)$ to itself, and has an inverse
\[   (I-K_0)^{-1}\colon L_0^p(\partial\Omega,\R) \to L_0^p(\partial\Omega,\R)
\]
with $\epsilon^+(\Omega)<p<\infty$ when $\Omega$ is Lipschitz and $1<p<\infty$ when $\Omega$ is $C^{1,\gamma}$.
  
We define the operator
$\G\colon L^{p}(\partial\Omega,\R^3) \to L_0^{p}(\partial\Omega,\R)$ by
\begin{align} \label{eq:transformada_Hilbert_monogenica_vectorial}
 \G[\vec\varphi] = -(I-K_0)^{-1} (I-A)\vecK \cdot \vec\varphi.
\end{align}
We have used the notational convention
\begin{align*}
   T\cdot \varphi =\sum_{i=0}^3{T_i \varphi_i}
\end{align*} 
which we will use whenever $T=\sum_{i=0}^3{e_iT_i}$ where $T_i$ are
right $\H$-linear operators which send scalar-valued functions to
scalar-valued functions, and $\varphi=\sum_{i=0}^3{e_i \varphi_i}$
with $\varphi_i$ scalar-valued.

\begin{proposicion}\label{prop:inverse-Hilbert}
  Assume that $\Omega, p$ satisfy the hypotheses of Proposition
  \ref{prop:invertible}. Then $\G$ is a left inverse for the Hilbert
  transform $\Hi$ on $L_0^p(\partial\Omega,\R)$.
\end{proposicion}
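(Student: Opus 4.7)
The plan is to verify $\G[\Hi[\varphi_0]]=\varphi_0$ for $\varphi_0\in L_0^p(\partial\Omega,\R)$ by a direct algebraic calculation, leveraging the involution identity \eqref{eq:sistema_involucion} and the averaging identity $AK_0=A$ noted just after Proposition \ref{prop:invertible}.

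First I would unwind the composition using the formulas \eqref{eq:Hilbert_transform} and \eqref{eq:transformada_Hilbert_monogenica_vectorial}. Since $\Hi[\varphi_0]=\vecK(I+K_0)^{-1}\varphi_0=\sum_i e_i K_i[(I+K_0)^{-1}\varphi_0]$ is vector-valued, the dot-product notation gives
\[
  \vecK\cdot \Hi[\varphi_0]=\sum_{i=1}^3 K_i\bigl[K_i[(I+K_0)^{-1}\varphi_0]\bigr]=\Bigl(\sum_{i=1}^3 K_i^2\Bigr)(I+K_0)^{-1}\varphi_0.
\]
The key reduction is then the identity \eqref{eq:sistema_involucion}, $\sum_i K_i^2=-(I-K_0^2)=-(I-K_0)(I+K_0)$, which collapses the composition to
\[
  \vecK\cdot \Hi[\varphi_0]=-(I-K_0)(I+K_0)(I+K_0)^{-1}\varphi_0=-(I-K_0)\varphi_0.
\]

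Next I would apply $(I-A)$. Because $AK_0=A$ on $L^p(\partial\Omega,\R)$, one has $A(I-K_0)\varphi_0=0$, so $(I-A)(I-K_0)\varphi_0=(I-K_0)\varphi_0$. Observe in particular that $(I-K_0)\varphi_0\in L_0^p(\partial\Omega,\R)$, which is exactly the space on which $(I-K_0)^{-1}$ is defined in the paragraph preceding the definition of $\G$ (by Proposition \ref{prop:invertibility-I+K_0}(d) and the Banach Closed Range argument already invoked in Proposition \ref{prop:invertible}). Therefore
\[
  \G[\Hi[\varphi_0]]=-(I-K_0)^{-1}(I-A)\vecK\cdot\Hi[\varphi_0]=(I-K_0)^{-1}(I-K_0)\varphi_0=\varphi_0,
\]
which is the desired left-inverse identity.

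There is no serious obstacle: all operators involved are bounded on the appropriate $L^p$ or $W^{1-1/p,p}$ spaces under the hypotheses inherited from Proposition \ref{prop:invertible}, and the only subtlety is checking that the intermediate expression lies in $L_0^p(\partial\Omega,\R)$ so that $(I-K_0)^{-1}$ may legitimately be applied. This is handled by the averaging identity $AK_0=A$. The same computation shows $\G$ is not a two-sided inverse in general, since $\Hi$ is not surjective onto $L^p(\partial\Omega,\R^3)$ (its image lies in the divergence-free trace space described after \eqref{eq:u+Hu}).
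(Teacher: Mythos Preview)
Your proof is correct and follows essentially the same approach as the paper: both compute $\G\circ\Hi[\varphi_0]$ directly, reduce $\vecK\cdot\vecK$ to $\sum_i K_i^2=-(I-K_0^2)$ via \eqref{eq:sistema_involucion}, and then use $AK_0=A$ to eliminate the averaging term. Your version is slightly more explicit about why the intermediate expression lies in $L_0^p(\partial\Omega,\R)$ before applying $(I-K_0)^{-1}$, which is a helpful clarification.
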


\begin{proof}
  Let $\varphi_0\in L_0^{p}(\partial\Omega,\R)$.  By
  \eqref{eq:Hilbert_transform} and \eqref{eq:sistema_involucion},
\begin{align*}
  \G \circ \Hi[\varphi_0]
  &= -(I-K_0)^{-1}(I-A)(\vecK \cdot 
    \vecK) (I+K_0)^{-1}\varphi_0\\
  &=(I-K_0)^{-1}(I-A)\left(-\sum_{i=1}^3{K_i^2}\right)(I+K_0)^{-1}\varphi_0\\
  &=(I-K_0)^{-1}(I-A)(I-K_0^2)(I+K_0)^{-1}\varphi_0\\
  &=(I-K_0)^{-1}(I-K_0+AK_0-A)\varphi_0\\
	&=\varphi_0,
\end{align*}\
where the last equality uses $AK_0=A$.
\end{proof}

The proof of the non-compactness of $\Hi$ fails in the case of bounded
Lipschitz domains because $K_0$ need not be compact
\cite{FaJoLe1977}. However, the existence of its left inverse
automatically guarantees the non-compactness. Other straightforward
consequences are the following.

\begin{corolario}
Under the same hypotheses,
\begin{enumerate}
\item[(a)] Restricted to $L_0^p(\partial\Omega,\R)$, the Hilbert
  transform $\Hi$ is injective and its left inverse $\G\colon L^{p}(\partial\Omega,\R^3)\to
  L_0^{p}(\partial\Omega,\R)$ is surjective.
\item[(b)] The left inverse $\G$ of the Hilbert transform is a bounded
  and non-compact operator.
\end{enumerate}
\end{corolario}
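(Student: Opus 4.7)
Part (a) follows from pure operator algebra. Proposition \ref{prop:inverse-Hilbert} gives $\G\circ \Hi = I$ on $L_0^p(\partial\Omega,\R)$, and both sides are defined on that space. I would invoke the standard fact that whenever $BA=I$ on a space $X$, the operator $A$ must be injective on $X$ (since $Ax=0$ forces $x=BAx=0$) and the operator $B$ must be surjective onto $X$ (since every $x\in X$ equals $B(Ax)$). Applied with $A=\Hi|_{L_0^p}$ and $B=\G$, this immediately yields the two assertions of (a). No further analysis is needed.

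For part (b), boundedness of $\G$ is a direct composition argument. By its definition \eqref{eq:transformada_Hilbert_monogenica_vectorial}, $\G$ equals $-(I-K_0)^{-1}(I-A)\vecK\cdot$, in which each factor is bounded on the appropriate $L^p$ space: $\vecK$ is bounded on $L^p(\partial\Omega,\R^3)$ (as noted in Section \ref{sec:preliminaries}), $A$ and hence $I-A$ map $L^p(\partial\Omega,\R)$ boundedly into $L_0^p(\partial\Omega,\R)$, and $(I-K_0)^{-1}$ is bounded on $L_0^p(\partial\Omega,\R)$ by the discussion following Proposition \ref{prop:invertible} under the standing hypotheses.

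Non-compactness of $\G$ is then obtained by a quick contradiction. Suppose $\G$ were compact as a map $L^p(\partial\Omega,\R^3)\to L_0^p(\partial\Omega,\R)$. Since $\Hi\colon L_0^p(\partial\Omega,\R)\to L^p(\partial\Omega,\R^3)$ is bounded (Theorem \ref{theo:H_bounded_noncompact} together with the boundedness of $\Hi$ on $L^p$ established in Section \ref{subsec:inverseHf}), the composition $\G\circ \Hi$ would be compact on $L_0^p(\partial\Omega,\R)$. But Proposition \ref{prop:inverse-Hilbert} identifies this composition with the identity, and $L_0^p(\partial\Omega,\R)$ is infinite-dimensional, so its identity cannot be compact. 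This contradiction finishes (b).

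I do not anticipate any serious obstacle: the whole corollary is a formal consequence of the identity $\G\circ \Hi=I$ from Proposition \ref{prop:inverse-Hilbert} together with already-established boundedness statements. The only point worth being careful about is bookkeeping of the range of $p$ and the domain of definition of $\G$ (the factor $I-A$ ensures the output lands in $L_0^p$, which is what makes $(I-K_0)^{-1}$ applicable), but these were already justified in the earlier propositions and require no new argument.
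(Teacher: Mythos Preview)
Your proposal is correct and matches the paper's approach: the paper does not give an explicit proof but simply states that parts (a) and (b) are ``straightforward consequences'' of the left-inverse identity $\G\circ\Hi=I$ from Proposition~\ref{prop:inverse-Hilbert}, which is exactly the argument you spell out. Your bookkeeping on the boundedness of the factors of $\G$ and the use of the identity on the infinite-dimensional space $L_0^p(\partial\Omega,\R)$ to rule out compactness is precisely what the paper has in mind.
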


From \eqref{eq:transformada_Hilbert_monogenica_vectorial} and  
$A^*=A$, the adjoint operator
$\G^*\colon L_0^p(\partial\Omega,\R)\to L^p(\partial\Omega,\R^3)$ is
given by
\begin{align*}
   \G^*[\varphi_0]=-\sum_{i=1}^3{e_iK_i^*}(I-A)(I-K_0^*)^{-1}
	 [\varphi_0]=-\sum_{i=1}^3{e_iK_i^*}(I-K_0^*)^{-1}[\varphi_0].
\end{align*}

We now look at the question of the images under $\G$ of the boundary
values of SI vector fields. Write $\SI(\partial\Omega)$ for the space
of boundary values of SI vector fields in $\Omega$ which extend to
$\overline{\Omega}$, which we recall from \eqref{eq:SI} are the purely
vectorial monogenic constants. Since SI vector fields are harmonic,
the SI extension of $\vec \varphi\in\SI(\partial\Omega)$ to the
interior is unique. The elements of
$\SI(\partial\Omega)$ are annihilated by $\G$, more precisely
\[  \SI(\partial\Omega)\cap
    L^{p}(\partial\Omega,\R^3) \subseteq \Ker\G.
\]   	
Because for every $\vec\varphi\in\SI(\partial\Omega)$,
$S_{\partial\Omega}[\vec \varphi]=\vec \varphi$, so
$\vecK\cdot\vec\varphi=0$. By
\eqref{eq:transformada_Hilbert_monogenica_vectorial},
$\vec\varphi\in\Ker\G$.

Clearly $\Ker\Hi=\R$ since the only scalar-valued monogenic functions
are constants. One important fact about $\Im \Hi$ is
$\SI(\partial\Omega)\cap
L^{p}(\partial\Omega,\R^3)\cap\operatorname{Im}\Hi=\{\vec 0\}$;
moreover,
 
\begin{corolario}\label{prop:H-semi-Fredholm}
  Under the same hypotheses, the Hilbert
  transform $\Hi$ on $L^p(\partial\Omega,\R)$ is a left semi-Fredholm
  operator.
\end{corolario}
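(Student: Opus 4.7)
The plan is to verify the two defining properties of a left (upper) semi-Fredholm operator: finite-dimensional kernel and closed range. The first is immediate from what has already been noted just before the corollary, namely $\Ker\Hi=\R$, which is one-dimensional.

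For the closed-range condition, the strategy is to exploit the left inverse $\G$ constructed in Proposition \ref{prop:inverse-Hilbert}. The key observation I would record first is that $\Hi$ vanishes on constants: indeed $(I+K_0)[c_0]=2c_0$ by \eqref{eq:Kconstant}, so $(I+K_0)^{-1}[c_0]=c_0/2$, and then $\vecK[c_0/2]=0$ again by \eqref{eq:Kconstant}. Therefore $\Hi$ factors through the averaging decomposition $L^p(\partial\Omega,\R)=\R\oplus L_0^p(\partial\Omega,\R)$, that is, $\Hi=\Hi\circ(I-A)$, and Proposition \ref{prop:inverse-Hilbert} gives $\G\circ\Hi=I-A$ on the full space $L^p(\partial\Omega,\R)$.

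From here, closedness of $\Im\Hi$ is a standard bounded-below argument. If $\Hi[\varphi_n]\to\psi$ in $L^p(\partial\Omega,\R^3)$, I would replace $\varphi_n$ by $(I-A)\varphi_n\in L_0^p(\partial\Omega,\R)$ without changing the image, and then apply the bounded operator $\G$ to obtain $(I-A)\varphi_n=\G\Hi[(I-A)\varphi_n]\to \G[\psi]$. Setting $\varphi:=\G[\psi]$ and using continuity of $\Hi$ yields $\Hi[\varphi]=\psi$, so $\psi\in\Im\Hi$. Equivalently, the restriction $\Hi|_{L_0^p}$ has a bounded left inverse and is therefore bounded below, so its range -- which coincides with $\Im\Hi$ -- is closed.

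I do not anticipate a real obstacle here, since both required facts (the explicit description of $\Ker\Hi$ and the existence of the bounded left inverse $\G$) have already been established in the preceding material; the corollary is really just the packaging of those two facts into the language of semi-Fredholm operators. The only care needed is to remember to factor out the constants via $A$ before invoking Proposition \ref{prop:inverse-Hilbert}, which is stated only on $L_0^p(\partial\Omega,\R)$.
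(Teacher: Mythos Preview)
Your argument is correct. Both the paper and you reduce the question to the existence of the bounded left inverse $\G$ on $L_0^p(\partial\Omega,\R)$ from Proposition~\ref{prop:inverse-Hilbert}, together with $\Ker\Hi=\R$. The difference lies only in how closedness of the range is extracted from that left inverse. The paper passes to adjoints, arguing that $\G\circ\Hi=I$ forces $\Hi^*$ to be surjective and then invokes the Banach Closed Range Theorem. You instead give the direct bounded-below argument: normalize to $L_0^p$ via $I-A$, then use $\varphi_n=\G\Hi[\varphi_n]$ to show any limit of $\Hi[\varphi_n]$ lies in $\Im\Hi$. Your route is slightly more elementary---it avoids duality and the Closed Range Theorem entirely---while the paper's version packages the same content into a one-line appeal to a standard theorem. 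Neither approach gains or loses generality; they are equivalent reformulations of the fact that a bounded operator with a bounded left inverse has closed range.
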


\begin{proof}
  It is enough to prove that when the domain of $\Hi$ is restricted to
  $L_0^p(\partial\Omega,\R)$, the image $\Im\Hi$ is closed in
  $L^p(\partial\Omega,\R^3)$ \cite[Chapter 5]{Kub2012}. By Proposition
  \ref{prop:inverse-Hilbert}, $\G^* \circ \Hi^*=I$, so $\Hi^*$ is 
	surjective. As a consequence of the Banach Closed Range theorem, $\Hi$ 
	has closed range.
\end{proof}

Since $\R=\Ker \Hi=\Ker \vecK$ and $\Im \Hi=\Im \vecK$, the vector operator
$\vecK$ is also left semi-Fredholm.

\subsection{Dirichlet-to-Neumann map}\label{subsec:D-N-monogenic}

Intimately related to the Hilbert transform is the
Dirichlet-to-Neumann (D-N) operator \cite{Behrndt2015}, which plays a
fundamental role in the study of elliptic partial differential
equations.  In the rest of this article we restrict to the case $p=2$
and work in domains $\Omega$ with Lipschitz boundary.

The following Hilbert spaces associated with the operators $\div$ and $\curl$
 appear in many electromagnetism problems. Following \cite[Chapter
9]{Dautray1985} and \cite[Chapter 1]{Girault1986}, let 
\begin{align*}
   W^{2,\div}(\Omega,\R^3) &= \left\{\vec u\in L^2(\Omega,\R^3)\colon\
                    \div \vec u\in L^2(\Omega,\R)\right\}, \\
   W^{2,\curl}(\Omega,\R^3) &= \left\{\vec u\in L^2(\Omega,\R^3)\colon\
                    \curl \vec u\in L^2(\Omega,\R^3)\right\},
\end{align*}
with norms $\|\vec u\|_{L^2}+\|\div\vec u\|_{L^2}$ and
$\|\vec u\|_{L^2}+\|\curl\vec u\|_{L^2}$ respectively.  Observe that
the conditions defining these spaces are weaker than requiring
$\grad u$ to be in $L^2$.  Therefore
$W^{1,2}(\Omega,\R^3)\subset W^{2,\div}(\Omega,\R^3)\cap
W^{2,\curl}(\Omega,\R^3)$,
but for the opposite containment it is necessary to add certain
boundary conditions; see Proposition
\ref{prop:Friedrichs-inequalities} below for the required
constraints. This result is sometimes enunciated as Friedrichs'
inequality; references include \cite{ABDG1998,Kri1984,Saranen1982}.

The \textit{normal and tangential trace operators} \cite{Dautray1985}
\begin{align*}
   \gamma\nrm(\vec u)=\vec u|_{\partial\Omega} \cdot \eta, \quad
   \gamma\tng(\vec u)=\vec u|_{\partial\Omega}\times \eta.
\end{align*}
are defined on $W^{2,\div}(\Omega,\R^3)$ and $W^{2,\curl}(\Omega,\R^3)$
respectively. They are weakly defined as 
\begin{align}\label{eq:normal-tangential-trace}
   \nonumber \langle \gamma\nrm(\vec u), \tr v_0 \rangle_
	 {\partial\Omega}&=\int_{\Omega}{\vec u \cdot \nabla \psi_0 \, 
	 d\vec y}+\int_{\Omega}{\div \vec u v_0 \, d\vec y},\\
	 \langle \gamma\tng (\vec u), \tr \vec v \rangle_{\partial\Omega}&=
		\int_{\Omega}{\vec u \cdot \curl \vec v \, d\vec y}
		-\int_{\Omega}{\curl\vec u \cdot \vec v \, d\vec y},
\end{align}
for every $v=v_0+\vec v\in W^{1,2}(\Omega,\R^3)$. Let
$W_0^{2,\div}(\Omega,\R^3)$ and $W_0^{2,\curl}(\Omega,\R^3)$ be the
kernels of the trace operators $\gamma\nrm$ and $\gamma\tng$,
respectively.

\begin{proposicion}(\textit{Friedrichs' inequalities} \cite[Chapter 9,
  Corollary 1]{Dautray1985}, \cite[Remark 2.14]{ABDG1998})
\label{prop:Friedrichs-inequalities}
Let $\Omega$ be a $C^{1,1}$ bounded Lipschitz domain. If  respectively
$\gamma\nrm(\vec u)\in H^{1/2}(\partial\Omega,\R)$ or
$\gamma\tng(\vec u)\in H^{1/2}(\partial\Omega,\R^3)$, then
\begin{align*}
   \|\vec u\|_{W^{1,2}}^2 &\leq C\left(\|\vec u\|_{L^2}^2 + 
	 \|\curl \vec u\|_{L^2}^2 + \|\div \vec u\|_{L^2}^2 + 
	 \|\gamma\nrm(\vec u)\|_{H^{1/2}}^2\right) 
\end{align*}
or	
\begin{align*}	 \|\vec u\|_{W^{1,2}}^2 &\leq C\left(\|\vec u\|_{L^2}^2 + 
	 \|\curl \vec u\|_{L^2}^2 + \|\div \vec u\|_{L^2}^2 + 
	 \|\gamma\tng(\vec u)\|_{H^{1/2}}^2\right),
\end{align*}
respectively, where $C>0$ only depends on $\partial\Omega$. 
\end{proposicion}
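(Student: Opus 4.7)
The plan is to reduce both inequalities to scalar elliptic regularity on the $C^{1,1}$ domain $\Omega$ via a Helmholtz-type decomposition that peels off a gradient carrying a prescribed trace. The underlying scalar facts are the $W^{2,2}$ regularity of the Dirichlet and Neumann problems for $\Delta$ on $C^{1,1}$ domains, together with the algebraic identity $-\Delta=\curl\curl-\nabla\div$ which is ultimately responsible for the three terms on the right-hand side. A convenient companion identity, obtained by integration by parts for sufficiently smooth test fields $\vec u$ with one vanishing trace, is
\begin{align*}
   \int_\Omega |\nabla\vec u|^2\,d\vec y
   = \int_\Omega \bigl(|\div\vec u|^2 + |\curl\vec u|^2\bigr)\,d\vec y
     + (\text{boundary term}),
\end{align*}
so that if the non-vanishing boundary trace is controlled in $H^{1/2}$, everything on the right is controlled and Friedrichs' inequality follows after a density argument.

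For the tangential inequality I would first solve the Dirichlet problem $\Delta q=\div\vec u$ in $\Omega$ with $q=0$ on $\partial\Omega$; on a $C^{1,1}$ domain this yields $q\in W^{2,2}(\Omega,\R)$ with $\|q\|_{W^{2,2}}\leq C\|\div\vec u\|_{L^2}$. Since $q$ vanishes on $\partial\Omega$, the gradient $\nabla q$ is purely normal there, hence $\gamma\tng(\nabla q)=0$. Setting $\vec w=\vec u-\nabla q$, I obtain $\div\vec w=0$, $\curl\vec w=\curl\vec u\in L^2$, and $\gamma\tng(\vec w)=\gamma\tng(\vec u)\in H^{1/2}$. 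The task is then to bound $\|\vec w\|_{W^{1,2}}$ by $\|\vec w\|_{L^2}+\|\curl\vec w\|_{L^2}+\|\gamma\tng(\vec w)\|_{H^{1/2}}$, for which one represents $\vec w=\curl\vec A$ with a vector potential $\vec A$ satisfying $-\Delta\vec A=\curl\vec w$, $\div\vec A=0$ together with a suitable boundary condition; classical $W^{2,2}$ regularity of $\vec A$ on the $C^{1,1}$ domain then gives $\vec w\in W^{1,2}$ with the required estimate. The normal inequality is dual: solve instead the Neumann problem $\Delta q=\div\vec u$ with $\partial q/\partial\eta=\gamma\nrm(\vec u)$ (choosing the mean of $q$ to fix the compatibility constant), so that the remainder $\vec w=\vec u-\nabla q$ has vanishing normal trace and nonzero tangential trace, and the roles of $\gamma\nrm$ and $\gamma\tng$ are swapped throughout the argument.

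The main obstacle is the handling of the divergence-free (or curl-free) remainder $\vec w$ in a way that is uniform in $\vec u$. On a simply connected $C^{1,1}$ domain with connected boundary the vector potential representation goes through unimpeded, but more generally one must project $\vec w$ onto the orthogonal complement of a finite-dimensional space of Neumann (or Dirichlet) harmonic fields associated to the topology of $\Omega$, and treat that finite-dimensional component separately. Since the proposition is quoted from \cite{Dautray1985,ABDG1998}, we work under their standing topological hypotheses, and the rest is careful bookkeeping on top of classical elliptic regularity for $\Delta$ on $C^{1,1}$ domains.
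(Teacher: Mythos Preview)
The paper does not prove this proposition at all: it is stated with attribution to \cite[Chapter 9, Corollary 1]{Dautray1985} and \cite[Remark 2.14]{ABDG1998} and used thereafter as a black box, so there is no ``paper's own proof'' to compare against.

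Your sketch follows the standard route taken in the cited references (particularly Amrouche--Bernardi--Dauge--Girault): subtract a gradient $\nabla q$ solving a scalar Dirichlet or Neumann problem to reduce to a divergence-free remainder $\vec w$ carrying the same boundary trace, then obtain $W^{1,2}$ regularity of $\vec w$ via a vector potential and elliptic regularity on the $C^{1,1}$ domain. The outline is correct, and your remark about the finite-dimensional obstruction from harmonic Neumann/Dirichlet fields in the non-simply-connected case is exactly the issue treated in \cite{ABDG1998}. The one place your write-up is genuinely thin is the phrase ``suitable boundary condition'' for the vector potential $\vec A$: the relevant boundary value problem is not a componentwise Dirichlet or Neumann problem for $\Delta$ but a coupled system (one prescribes, say, $\vec A\times\eta$ and $\div\vec A$ on $\partial\Omega$), and its $W^{2,2}$ regularity on $C^{1,1}$ domains is itself a nontrivial theorem rather than ``classical'' scalar regularity. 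If you were writing this out in full you would need either to invoke that vector regularity result directly from \cite{ABDG1998} or to supply the reduction to scalar problems explicitly; as a sketch pointing to the literature, however, what you have is adequate.
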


Let 
\begin{align}\label{eq:kernels}
   \nonumber W\nrm^{2,\div\text{-}\curl}(\Omega,\R^3)=W_0^{2,\div}
	 (\Omega,\R^3)\cap W^{2,\curl}(\Omega,\R^3) ,\\
    W\tng^{2,\div\text{-}\curl}(\Omega,\R^3)=W^{2,\div}(\Omega,\R^3)\cap 
		W_0^{2,\curl}(\Omega,\R^3),
\end{align}
with the norm $\|\vec u\|_{L^2}^2+\|\div \vec u\|_{L^2}^2+\|\curl \vec u\|_{L^2}^2$.

\begin{proposicion}{\rm \cite[Theorems 2.8, 2.9, 2.12]{ABDG1998}, 
\cite{Weber1980}.}
\label{prop:div-curl-spaces}
\\(a) Let $\Omega$ be a bounded $C^{1,1}$ Lipschitz domain. Then
$ W\nrm^{2,\div\text{-}\curl}(\Omega,\R^3)$ and  $W\tng^{2,\div\text{-}\curl}(\Omega,\R^3)$ are contained in $W^{1,2}(\Omega,\R^3)$. 
\\(b) The inclusions of $W\nrm^{2,\div\text{-}\curl}(\Omega,\R^3)$ and $W\tng^{2,\div\text{-}\curl}(\Omega,\R^3)$ into $L^2(\Omega,\R^3)$ are compact operators.
\end{proposicion}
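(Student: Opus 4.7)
The plan is to derive both statements as direct consequences of Friedrichs' inequalities (Proposition~\ref{prop:Friedrichs-inequalities}) combined with the Rellich--Kondrachov compactness theorem for $W^{1,2}$.

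For part (a), let $\vec u\in W\nrm^{2,\div\text{-}\curl}(\Omega,\R^3)$. By definition, $\gamma\nrm(\vec u)=0$, which trivially lies in $H^{1/2}(\partial\Omega,\R)$ with zero norm. The first inequality in Proposition~\ref{prop:Friedrichs-inequalities} then yields
\[
\|\vec u\|_{W^{1,2}}^{2} \leq C\bigl(\|\vec u\|_{L^{2}}^{2} + \|\curl\vec u\|_{L^{2}}^{2} + \|\div\vec u\|_{L^{2}}^{2}\bigr),
\]
which shows both that $\vec u\in W^{1,2}(\Omega,\R^3)$ and that the inclusion is continuous with respect to the intrinsic norm of $W\nrm^{2,\div\text{-}\curl}$. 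The tangential case is handled identically, invoking the second Friedrichs inequality with $\gamma\tng(\vec u)=0\in H^{1/2}(\partial\Omega,\R^3)$.

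For part (b), observe that the inclusion into $L^2(\Omega,\R^3)$ factors as the continuous inclusion established in (a) followed by the Rellich--Kondrachov compact embedding $W^{1,2}(\Omega,\R^3)\hookrightarrow L^2(\Omega,\R^3)$, which is available because $\Omega$ is bounded and Lipschitz. Since the composition of a continuous operator with a compact operator is compact, both inclusions in (b) are compact.

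The substantive step is part (a); it rests entirely on the Friedrichs inequalities, whose validity genuinely requires $C^{1,1}$ (or comparable) regularity of $\partial\Omega$ to ensure the full $W^{1,2}$-control from just the $L^2$, $\div$, $\curl$, and boundary-trace data. Once (a) is in place, (b) is essentially automatic. The only thing to double-check is that the boundary term in Friedrichs' inequality genuinely drops out under the vanishing-trace condition, which is immediate from the definitions of $W_0^{2,\div}(\Omega,\R^3)$ and $W_0^{2,\curl}(\Omega,\R^3)$ as the kernels of $\gamma\nrm$ and $\gamma\tng$ respectively.
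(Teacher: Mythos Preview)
Your argument is correct. The paper itself gives no proof of this proposition; it simply records the result with citations to \cite{ABDG1998} and \cite{Weber1980}. Your derivation of (a) from the Friedrichs inequalities (Proposition~\ref{prop:Friedrichs-inequalities}) and of (b) via Rellich--Kondrachov is valid and self-contained within the paper's framework.

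One remark worth making: in the source \cite{ABDG1998}, the logical flow is essentially the reverse of yours --- the $W^{1,2}$-regularity statements (their Theorems 2.8--2.12) are proved directly, and the Friedrichs inequalities (their Remark 2.14, cited here as Proposition~\ref{prop:Friedrichs-inequalities}) are then read off as corollaries. So while your deduction of (a) from Proposition~\ref{prop:Friedrichs-inequalities} is formally impeccable given how the paper presents things, it does not constitute an independent proof of the underlying regularity; the real analytic work is hidden inside Proposition~\ref{prop:Friedrichs-inequalities}. Your handling of (b) via factoring through the compact embedding $W^{1,2}\hookrightarrow L^2$ is the standard and cleanest route once (a) is available.
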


\begin{proposicion}\label{prop:monogenic constants}{\rm \cite[Chapter 9, Corollary 2]{Dautray1985}.}
  Let $\Omega$ be a bounded $C^{1,1}$ Lipschitz domain. The subspaces
  of ``normally'' and ``tangentially'' monogenic constants in the
  interior $\Omega=\Omega^+$ or exterior domain $\Omega^-$,
\begin{align} \label{eq:bdry_constant_monogenics_space}
  \SI\nrm(\Omega^{\pm}) &= \left\{\vec u\in L^2(\Omega^{\pm},\R^3)\colon\
    \div \vec u=0, \curl \vec u=0, \gamma\nrm(\vec u)=0\right\},
  \nonumber\\
  \SI\tng(\Omega^{\pm}) &= \left\{\vec u\in L^2(\Omega^{\pm},\R^3)\colon\
   \div \vec u=0, \curl \vec u=0, \gamma\tng(\vec u)=0\right\},
\end{align}
have finite dimension.  
\end{proposicion}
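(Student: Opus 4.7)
The plan is to realize each of $\SI\nrm(\Omega^\pm)$ and $\SI\tng(\Omega^\pm)$ as a closed subspace on which the $L^2$ norm coincides with the graph norm $\|\vec u\|_{L^2}^2+\|\div\vec u\|_{L^2}^2+\|\curl\vec u\|_{L^2}^2$, and then invoke Proposition~\ref{prop:div-curl-spaces}(b) together with Riesz's theorem (a normed space whose identity operator is compact is finite-dimensional).

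For the interior case $\Omega^+=\Omega$, the defining conditions give the inclusions $\SI\nrm(\Omega^+)\subseteq W\nrm^{2,\div\text{-}\curl}(\Omega,\R^3)$ and $\SI\tng(\Omega^+)\subseteq W\tng^{2,\div\text{-}\curl}(\Omega,\R^3)$; each constraint ($\div\vec u=0$, $\curl\vec u=0$, and the vanishing of $\gamma\nrm(\vec u)$ or $\gamma\tng(\vec u)$ via the weak definition \eqref{eq:normal-tangential-trace}) is a continuous linear condition, so these are closed subspaces of the ambient Hilbert spaces. On these subspaces the graph norm collapses to $\|\vec u\|_{L^2}^2$, hence the restriction to $\SI\nrm(\Omega^+)$ of the compact embedding $W\nrm^{2,\div\text{-}\curl}(\Omega,\R^3)\hookrightarrow L^2(\Omega,\R^3)$ from Proposition~\ref{prop:div-curl-spaces}(b) is nothing other than the identity of $\SI\nrm(\Omega^+)$ in its $L^2$ topology. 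Riesz's theorem then forces $\dim\SI\nrm(\Omega^+)<\infty$, and the same reasoning applied to $W\tng^{2,\div\text{-}\curl}$ yields $\dim\SI\tng(\Omega^+)<\infty$.

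The exterior case is the principal obstacle, because $\Omega^-$ is unbounded and Proposition~\ref{prop:div-curl-spaces}(b) does not apply verbatim. I would reduce to a bounded setting by truncation: fix a large ball $B_R\supset\overline{\Omega}$ and restrict $\vec u$ to the bounded Lipschitz annular region $B_R\setminus\overline{\Omega}$, which inherits the conditions $\div\vec u=0$, $\curl\vec u=0$ and the homogeneous trace on $\partial\Omega$. Since $\curl\vec u=0$ on $\Omega^-$ and the complement of $\Omega$ is connected (standing assumption), $\vec u=\nabla\phi$ for a harmonic $\phi$, and the integrability $\vec u\in L^2(\Omega^-,\R^3)$ enforces decay at infinity that controls the trace on $\partial B_R$ up to a finite-dimensional ambiguity. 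The interior argument then applies to the truncated region and produces the desired finite-dimensionality; this is essentially the path traced in \cite[Chapter~9, Corollary~2]{Dautray1985}. The ultimate dimensions are in fact topological invariants of $\partial\Omega$ (Betti numbers), but only finiteness is needed for the subsequent development in this paper.
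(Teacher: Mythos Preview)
The paper does not actually supply a proof of this proposition: it is stated with a direct citation to \cite[Chapter~9, Corollary~2]{Dautray1985} and nothing more. So there is no ``paper's own proof'' to compare against beyond that reference.

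Your interior argument is correct and is precisely the standard one underlying the cited result: the spaces $\SI\nrm(\Omega)$ and $\SI\tng(\Omega)$ are closed subspaces of $W\nrm^{2,\div\text{-}\curl}(\Omega,\R^3)$ and $W\tng^{2,\div\text{-}\curl}(\Omega,\R^3)$ on which the graph norm degenerates to the $L^2$ norm, so the compact embedding of Proposition~\ref{prop:div-curl-spaces}(b) restricts to a compact identity, and Riesz's theorem finishes. This is more than the paper provides.

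Your exterior sketch is honest about the obstacle but is not yet a proof. The truncation to $B_R\setminus\overline{\Omega}$ loses the homogeneous boundary condition on $\partial B_R$, so the restricted field does not land in $W\nrm^{2,\div\text{-}\curl}$ or $W\tng^{2,\div\text{-}\curl}$ of the annulus, and Proposition~\ref{prop:div-curl-spaces}(b) cannot be invoked directly. One has to either (i) work in the weighted $L^2$ framework for exterior domains where an analogous compact embedding holds, or (ii) combine unique continuation (injectivity of restriction to the annulus) with a compactness argument that accommodates the free trace on $\partial B_R$. Both routes are carried out in \cite{Dautray1985}, to which you correctly defer; since the paper itself simply cites that reference, your treatment already exceeds what is required here.
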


With these preliminaries we introduce the ``quaternionic
Dirichlet-to-Neumann map''
\begin{align}\label{eq:quaternionic-D-N-monogenic}
   \nonumber  \Lambda\colon H^{1/2}(\partial\Omega,\R)&
   \to H^{-1/2}(\partial\Omega,\H)\\
   \varphi_0 &\mapsto (D w_0|_{\partial\Omega}) \eta,
\end{align}
where $w_0\in W^{1,2}(\Omega,\R)$ is the unique harmonic extension of
$\varphi_0$; note the quaternionic multiplication of vectors. 
 
Since
$\Sc((D w_0|_{\partial\Omega}\eta)\overline{v})=-(\gamma\nrm\nabla
w_0)v_0+(\gamma\tng\nabla w_0)\cdot\vec v$
for every $v=v_0+\vec v\in W^{1,2}(\Omega,\H)$, by the weak
definitions of $\gamma\nrm$ and $\gamma\tng$
\eqref{eq:normal-tangential-trace}, we have
\begin{align}\label{weak-monogenic-D-N}
  \langle \Lambda[\varphi_0], \tr v \rangle_{\partial\Omega}
 =&\int_\Omega{\nabla w_0 \cdot (-\nabla v_0+\curl\vec v) \,d\vec y } \nonumber \\
 =& \Sc \int_{\Omega}\nabla w_0 \Dr v \, d\vec y,
\end{align}
where we write $\Dr$ for the right-sided operator 
\[ \Dr[v]=vD = \sum_1^3(\partial_iv)e_i =
   -\div\vec v+(\nabla v_0-\curl\vec v).
\] 
 The scalar and vector parts of the quaternionic product 
$(D w_0|_{\partial\Omega}) \eta$ give
\begin{align*}
   \Lambda[\varphi_0] &=\Lambda_0[\varphi_0]+\vec \Lambda[\varphi_0] 
\end{align*}
with $\Lambda_0[\varphi_0]=-\gamma\nrm\nabla w_0$,
$\vec\Lambda[\varphi_0]=\gamma\tng\nabla w_0$. Thus the scalar part of
$\Lambda[\varphi_0]$ coincides with the negative of the usual scalar
D-N map for the Laplacian Dirichlet problem
\cite{DahKen1987,Kenig1994}.  We will verify in subsection
\ref{subsec:quatDNmap} that $\Lambda[\varphi_0]$ does indeed lie in
$H^{-1/2}(\partial\Omega,\H)$ as implied by \eqref{eq:quaternionic-D-N-monogenic}.
 
(In the two-dimensional context, such as in \cite{AP2006}, one has
only a scalar D-N mapping, denoted commonly by ``$\Lambda$''.)

As usual $W^{2,2}(\Omega,\R)$ is the Sobolev space of scalar functions
whose gradient belongs to $W^{1,2}(\Omega,\R)$ and $H^{3/2}(\partial\Omega,\R)$ is the space of boundary values of functions in $W^{2,2}(\Omega,\R)$.

\begin{proposicion} \label{prop:T=-MLambdatr}
 $T_\Omega\circ \nabla=-M\circ\Lambda\circ\tr$ on $\Har(\Omega,\R)\cap W^{2,2}(\Omega,\R)$.
\end{proposicion}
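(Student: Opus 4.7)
My plan is to verify the identity by computing both sides directly, using the key observation that $E(\vec y-\vec x)=\nabla_{\vec y}G(\vec y,\vec x)$ where $G(\vec y,\vec x)=1/(4\pi|\vec y-\vec x|)$ is the kernel of $M$. This converts the volume integrals defining $T_\Omega[\nabla w_0]$ into boundary integrals via integration by parts, with all bulk residuals absorbed by $\Delta w_0=0$. The hypothesis $w_0\in\Har(\Omega,\R)\cap W^{2,2}(\Omega,\R)$ guarantees that $\nabla w_0\in W^{1,2}(\Omega,\R^3)$ is monogenic (since $D\nabla w_0=-\Delta w_0=0$) and has a trace in $H^{1/2}(\partial\Omega,\R^3)$, so $\Lambda[\tr w_0]=(\nabla w_0|_{\partial\Omega})\eta$ lies in $H^{1/2}(\partial\Omega,\H)$ and $M[\Lambda[\tr w_0]]\in W^{1,2}(\Omega,\H)$ by Theorem~\ref{theo:T_acotado}(c); both sides of the claimed identity then live in the same function space.

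Using the decomposition \eqref{eq:decomposition_T}, I would split $T_\Omega[\nabla w_0]=\Ti[\nabla w_0]+\Tiii[\nabla w_0]$ and treat each part separately. For the scalar part, Green's first identity combined with $\Delta w_0=0$ should collapse the volume integral to
\[
  \Ti[\nabla w_0](\vec x)=\int_\Omega\nabla_{\vec y}G\cdot\nabla w_0\,d\vec y
  =\int_{\partial\Omega}G\,(\gamma\nrm\nabla w_0)\,ds=M[\gamma\nrm\nabla w_0](\vec x),
\]
which equals $-M[\Lambda_0[\tr w_0]](\vec x)$ since $\Lambda_0=-\gamma\nrm\nabla w_0$. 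For the vector part, I would use the identity $\nabla_{\vec y}G\times\nabla w_0=\curl_{\vec y}(G\nabla w_0)$, valid because $\curl\nabla w_0=0$, together with the vector Gauss formula $\int_\Omega\curl\vec v\,d\vec y=\int_{\partial\Omega}\eta\times\vec v\,ds$, to reduce $\Tiii[\nabla w_0]$ to a boundary single-layer potential involving $\gamma\tng\nabla w_0=\vec\Lambda[\tr w_0]$. Reassembling scalar and vector pieces and recalling that $M$ acts componentwise on $\H$-valued functions should yield $T_\Omega[\nabla w_0]=-M[\Lambda[\tr w_0]]$ on $\Omega$.

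The main technical obstacle will be the careful bookkeeping of the non-commutative quaternion products on both sides, together with the signs appearing via the antisymmetry $\eta\times\vec v=-\vec v\times\eta$ and the orientation convention in the Stokes-type identity for the curl. These must be tracked precisely so that the reassembled boundary datum matches exactly the product $(\nabla w_0)\eta$ defining $\Lambda$, and not the reversed product $\eta(\nabla w_0)$ (which differs from it in its vector part). An alternative route, which could serve as a cross-check, is to invoke the Borel-Pompeiu formula $T_\Omega[\nabla w_0]=w_0-F_{\partial\Omega}[\tr w_0]$ together with the representation $F_{\partial\Omega}[\varphi]=-DM[\eta\varphi]$ and the classical Green's representation of harmonic functions; scalar and vector parts of the resulting identity can then be compared with the decomposition $\Lambda=\Lambda_0+\vec\Lambda$ above.
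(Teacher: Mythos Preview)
Your proposal is correct and follows essentially the same strategy as the paper's proof. The paper simply cites \cite[Propositions~4.7, 4.8]{DelPor2017} for the two identities $\Ti[\vec w]=M[\vec w|_{\partial\Omega}\cdot\eta]$ (for solenoidal $\vec w$) and $\Tiii[\vec w]=-M[\vec w|_{\partial\Omega}\times\eta]$ (for irrotational $\vec w$), and then applies both to $\vec w=\nabla w_0$; the integration-by-parts computations you outline amount to re-deriving exactly these two lemmas in the special case at hand, and your caution about the sign bookkeeping in the $\Tiii$ piece is well placed.
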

\begin{proof} In \cite[Propositions 4.7,
  4.8]{DelPor2017} it was seen that
  $\Ti[\vec w]=M[\vec w|_{\partial\Omega} \cdot\eta]$ for all
  $\vec w\in \Sol(\overline{\Omega},\R^3)$ and
  $\Tiii[\vec w]=-M[\vec w|_{\partial\Omega} \times\eta]$
  for all $\vec w\in\Irr(\overline{\Omega},\R^3)$, where $M$ is the
  single-layer operator \eqref{eq:single_layer_operator}. 
\end{proof}

Note that by \eqref{eq:single_layer_operator}, $M$ is a scalar operator,
so 
\begin{equation}  \label{eq:TMLambda}
  \Ti\circ\nabla = -M\circ\Lambda_0\circ\tr,\quad
  \Tiii\circ\nabla = -M\circ\vec\Lambda\circ\tr.
\end{equation}

We proved that the Hilbert transform $\Hi$ is a non-compact operator. However, when restricted to $\Ker\vec\Lambda$, by dimensional
properties of $\SI\tng(\Omega)$, then $\Hi$ becomes compact.
Recall that we are always assuming that $\partial\Omega$ is connected.

\begin{proposicion}
  $\Ker \Lambda_0=\R$, $\dim \Ker \vec\Lambda<\infty$, and
  $\Ker \vec\Lambda=\R$.
\end{proposicion}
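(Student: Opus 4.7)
I would prove the three assertions in the order stated, using the representations $\Lambda_0[\varphi_0]=-\gamma\nrm\nabla w_0$ and $\vec\Lambda[\varphi_0]=\gamma\tng\nabla w_0$, the harmonicity of the extension $w_0$, and the connectedness of $\partial\Omega$.

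For $\Ker\Lambda_0=\R$ the inclusion $\R\subseteq\Ker\Lambda_0$ is immediate, since constants are harmonic with vanishing normal derivative. Conversely, if $\Lambda_0[\varphi_0]=0$, the harmonic extension $w_0$ satisfies the homogeneous weak Neumann condition. Testing the weak identity \eqref{eq:normal-tangential-trace} for $\gamma\nrm\nabla w_0$ against $v_0=w_0$, and using $\div\nabla w_0=\Delta w_0=0$, yields
\[
   \int_\Omega|\nabla w_0|^2\,d\vec y
      = \langle \gamma\nrm\nabla w_0,\,\tr w_0\rangle_{\partial\Omega}=0,
\]
so $w_0$ is constant on the connected domain $\Omega$, whence $\varphi_0\in\R$.

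For $\dim\Ker\vec\Lambda<\infty$, let $\varphi_0\in\Ker\vec\Lambda$ and put $\vec u:=\nabla w_0$. Then $\div\vec u=\Delta w_0=0$, $\curl\vec u=0$, and $\gamma\tng\vec u=\vec\Lambda[\varphi_0]=0$, so $\vec u\in\SI\tng(\Omega^+)$ in the sense of \eqref{eq:bdry_constant_monogenics_space}. By Proposition~\ref{prop:monogenic constants}, this space is finite-dimensional. The assignment $\varphi_0\mapsto\nabla w_0$ has kernel $\R$ (a vanishing gradient on the connected domain $\Omega$ forces $w_0$, hence $\varphi_0$, to be constant), so it descends to an injection $\Ker\vec\Lambda/\R\hookrightarrow\SI\tng(\Omega^+)$, giving $\dim\Ker\vec\Lambda<\infty$.

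To sharpen this to $\Ker\vec\Lambda=\R$, note that $\gamma\tng\nabla w_0=\nabla w_0\times\eta=0$ on $\partial\Omega$ is exactly the vanishing of the tangential component of $\nabla w_0|_{\partial\Omega}$. This tangential component coincides (in the $H^{\pm 1/2}(\partial\Omega)$-duality) with the surface gradient of $\varphi_0=\tr w_0$; hence that surface gradient vanishes, and since $\partial\Omega$ is connected, $\varphi_0$ is constant on $\partial\Omega$. Then $w_0$ is the harmonic extension of a constant, equal to that constant by the maximum principle, so $\varphi_0\in\R$.

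\textbf{Main obstacle.} The delicate step is rigorously identifying $\gamma\tng\nabla w_0\in H^{-1/2}(\partial\Omega,\R^3)$ with (a rotation by $\eta$ of) the surface gradient of $\varphi_0\in H^{1/2}(\partial\Omega,\R)$ at this low Sobolev regularity. This can be done by testing \eqref{eq:normal-tangential-trace} against fields of the form $\vec v=\curl\vec\psi$ and using $\curl\nabla w_0=0$, which rewrites $\langle\gamma\tng\nabla w_0,\tr\vec v\rangle_{\partial\Omega}$ as a boundary pairing depending only on $\varphi_0$ and exhibiting its surface gradient; the connectedness of $\partial\Omega$ then converts ``vanishing surface gradient'' into ``$\varphi_0$ is constant''.
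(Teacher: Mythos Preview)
Your argument is correct, but parts 2 and 3 follow a genuinely different route from the paper. For $\dim\Ker\vec\Lambda<\infty$ the paper does not use the direct injection $\Ker\vec\Lambda/\R\hookrightarrow\SI\tng(\Omega)$; instead it passes through the Hilbert transform, invoking the identity \eqref{eq:HT2T0} and the relations \eqref{eq:TMLambda} to show that $\Hi(\Ker\vec\Lambda)$ lies in the image of $\SI\tng(\Omega)$ under a bounded map, and then applies the left inverse $\G$ of Proposition~\ref{prop:inverse-Hilbert}. For $\Ker\vec\Lambda=\R$ the paper again avoids any surface-gradient analysis: it uses the topological fact that $\SI\tng(\Omega)$ is isomorphic to the second de Rham cohomology of $\Omega$, which vanishes when $\partial\Omega$ is connected, so $\Hi(\Ker\vec\Lambda)=0$ and hence $\Ker\vec\Lambda\subseteq\Ker\Hi=\R$.

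Your approach is more elementary and self-contained, bypassing the $\Hi$ machinery altogether. Note, incidentally, that your own part~2 already gives you a cleaner finish for part~3 than the surface-gradient argument you sketch: once you know $\nabla w_0\in\SI\tng(\Omega)$, the same cohomological fact the paper uses ($\SI\tng(\Omega)=0$ for connected $\partial\Omega$) yields $\nabla w_0=0$ directly, so $w_0$ and $\varphi_0$ are constant. This sidesteps the low-regularity identification of $\gamma\tng\nabla w_0$ with the surface gradient of $\varphi_0$ that you flag as the main obstacle.
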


\begin{proof}
Let $\varphi_0\in H^{1/2}(\partial\Omega,\R)$, and let $w_0$ be its harmonic extension. If $\varphi_0\in \Ker \Lambda_0$, then $w_0$ satisfies a trivial Neumann condition and therefore is constant as claimed \cite[Theorem 4.18]{DahKen1987}. 

Now suppose instead that $\varphi_0\in \Ker \vec\Lambda$. Since $\nabla w_0$ is a monogenic constant with vanishing tangential trace,
\begin{align*}
   \Hi[\varphi_0] =& -\tr\Tiii[\nabla w_0]+\Hi \circ \tr \Ti[\nabla w_0]
\end{align*}
lies in the image of the finite-dimensional space $\SI\tng(\Omega)$ (see Proposition \ref{prop:monogenic constants}). Thus $\dim \Hi(\Ker \vec \Lambda)<\infty$. Applying the left inverse $\G$ of $\Hi$ given in Proposition \ref{prop:inverse-Hilbert} we have the second claim.

Finally, since $\partial\Omega$ is connected, $\SI\tng(\Omega)=0$
because $\SI\tng(\Omega)$ is isomorphic to the second real cohomology
space \cite{Berger1977}, so $\Hi(\Ker \vec \Lambda)=0$. Thus
$\Ker \vec \Lambda \subseteq \Ker \Hi=\R$. Clearly $\vec \Lambda$
annihilates constants, so the proof is finished.
\end{proof}

  
\section{Hilbert transform associated to the main Vekua
  equation \label{sec:f-HT}}

The general Vekua equation $DW=aW+b\overline{W}$, whose theory was
introduced in \cite{Bers1953,Vekua1959} for functions in $\R^2$, plays
an important role in the theory of pseudo-analytic functions, which
has since been been extended to wider contexts, including quaternionic
analysis \cite{Berglez2007,Malonek1998}. The definition of the Hilbert transform $\Hi$
for monogenic functions now permits us to define the analogous Hilbert
transform $\Hi_{f}$ associated to the \textit{main Vekua equation}
\begin{align}\label{eq:Vekua}
   DW=\frac{Df}{f}\overline W.
\end{align}

Following the vocabulary used in \cite{DelPor2017} we will say that
$f^2$ is a \textit{conductivity} when $f$ is a non-vanishing
$\R$-valued function in the domain under consideration. The
conductivity will be called \textit{proper} when
$\rho(f)=\sup(|f|, 1/|f|)$ is finite. Most of the time
$f\in W^{1,\infty}(\Omega,\R)$. Note that $f$ and $(1/f)\vec u$ are
simple examples of solutions of \eqref{eq:Vekua}, where
$\vec u\in\SI(\Omega)$ is a vectorial monogenic constant.  We now extend some of our previous
results, which are applicable to $f\equiv1$, to the more general
equation \eqref{eq:Vekua}.

\subsection{Construction of the Vekua-Hilbert
  transform \label{subsec:construction}}

Results in \cite[Chapter 16]{Krav2009} relate solutions of the main
Vekua equation to solutions of other differential equations. Note that
$W=W_0+\vec W$ satisfies \eqref{eq:Vekua} if and only if the scalar
part $W_0$ and the vector part $\vec W$ satisfy the following
homogeneous div-curl system:
\begin{align}\label{eq:systeK_1} 
  \div(f\vec W) &= 0, \nonumber\\
  \curl(f\vec W) &= -f^2\nabla (W_0/f).
\end{align}
In particular, the vector part $\vec W$ must satisfy the
\textit{double curl-type equation}
\begin{align}  \label{eq:doublerot} 
  \curl (f^{-2} \curl (f\vec W )) = 0   
\end{align}
while the scalar part $W_0$ is a solution of the \textit{conductivity equation} 
\begin{align}  	\label{eq:conductivity} 
  \nabla\cdot f^2\nabla(W_0/f) = 0.
\end{align}
 
The following fact is derived from a basic estimate on elliptic
boundary problems.
\begin{lema} {\rm(\cite[Theorem 4.1]{Isakov1998}, \cite[Theorem
    10]{Mikh1978} see also
    \cite{Evans1998,Gilbarg1983,Grisvard1985})} \label{lema:solution_conductivity}
  Let $\Omega$ be a bounded domain in $\R^3$ with Lipschitz boundary
  and let $f^2\in W^{1,\infty}(\Omega,\R)$ be a proper
  conductivity. Suppose that $\varphi_0\in H^{1/2}(\partial\Omega,\R)$
  is known. Then there exists a unique extension
  $W_0\in W^{1,2}(\Omega,\R)$ satisfying \eqref{eq:conductivity} such
  that
\begin{align}\label{eq:extension_conductivity}
   \tr (W_0/f)&=\varphi_0   
\end{align}
 on   $\partial\Omega$. Further,
\begin{align}\label{eq:minimiza_W_0}
   \|W_0/f\|_{W^{1,2}(\Omega,\R)}\leq C_{\Omega,\rho(f)} \|\varphi_0\|_{H^{1/2}(\partial\Omega,\R)}
\end{align}
where $C_{\Omega,\rho(f)}$ only depends on $\Omega$ and $\rho(f)$.
\end{lema}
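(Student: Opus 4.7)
The plan is to perform the change of variables $u = W_0/f$, which converts \eqref{eq:conductivity} into the standard divergence-form elliptic Dirichlet problem
\[ \nabla\cdot(f^2\nabla u) = 0 \text{ in } \Omega, \qquad \tr u = \varphi_0, \]
and then solve it by a routine Lax--Milgram argument. Since $f$ is proper, $f^2$ is measurable with $1/\rho(f)^2 \leq f^2 \leq \rho(f)^2$, so the operator is uniformly elliptic with bounded coefficients. Note that the map $u\mapsto W_0=fu$ is a bounded linear isomorphism of $W^{1,2}(\Omega,\R)$ onto itself because both $f$ and $1/f$ lie in $W^{1,\infty}(\Omega,\R)$, so existence, uniqueness and estimates transfer back and forth between $u$ and $W_0$ with constants depending only on $\Omega$ and $\rho(f)$.

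First I would apply the Trace Theorem from Section~\ref{sec:preliminaries} to pick a lift $\Phi \in W^{1,2}(\Omega,\R)$ of $\varphi_0$ with $\|\Phi\|_{W^{1,2}} \leq C_\Omega\,\|\varphi_0\|_{H^{1/2}(\partial\Omega)}$. The problem then reduces to finding $v = u - \Phi \in W_0^{1,2}(\Omega,\R)$ such that
\[ B(v,\psi) := \int_\Omega f^2\,\nabla v\cdot\nabla\psi\,d\vec y = -\int_\Omega f^2\,\nabla\Phi\cdot\nabla\psi\,d\vec y \]
for every $\psi \in W_0^{1,2}(\Omega,\R)$.

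The bilinear form $B$ satisfies $|B(v,\psi)| \leq \rho(f)^2\|\nabla v\|_{L^2}\|\nabla\psi\|_{L^2}$ and is coercive on $W_0^{1,2}(\Omega,\R)$ via the Poincar\'e inequality together with the lower bound $f^2\geq 1/\rho(f)^2$, the coercivity constant depending only on $\Omega$ and $\rho(f)$. The right-hand side is a bounded linear functional on $W_0^{1,2}(\Omega,\R)$. Lax--Milgram produces a unique $v$, hence a unique $u = v+\Phi$ with $\tr u = \varphi_0$, and finally a unique $W_0 = fu \in W^{1,2}(\Omega,\R)$ solving \eqref{eq:conductivity}.

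For the estimate \eqref{eq:minimiza_W_0}, I would test with $\psi = v$: coercivity and boundedness together give $\|\nabla v\|_{L^2} \lesssim \rho(f)^4\|\nabla\Phi\|_{L^2}$, so the triangle inequality $\|u\|_{W^{1,2}} \leq \|v\|_{W^{1,2}} + \|\Phi\|_{W^{1,2}}$ combined with the trace bound on $\Phi$ controls $\|u\|_{W^{1,2}}$ by $C_{\Omega,\rho(f)}\|\varphi_0\|_{H^{1/2}}$, which is the desired inequality. The only mild obstacle is the bookkeeping of constants, which depend on $\Omega$ through the Poincar\'e constant and the operator norm of the trace lifting; no analytic obstruction arises, as this is essentially the textbook Dirichlet problem for uniformly elliptic divergence-form equations cited in \cite{Isakov1998,Mikh1978,Evans1998,Gilbarg1983}.
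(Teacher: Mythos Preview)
Your argument is correct and is exactly the standard Lax--Milgram treatment of the Dirichlet problem for a uniformly elliptic divergence-form operator that underlies the cited references. The paper itself does not supply a proof of this lemma: it is stated with citations to \cite{Isakov1998,Mikh1978,Evans1998,Gilbarg1983,Grisvard1985} and prefaced only by the remark that it ``is derived from a basic estimate on elliptic boundary problems,'' so there is nothing further to compare.
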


In Section \ref{sec:Dirichlet-Neumann} we will define a natural
Neumann data for the conductivity equation \eqref{eq:conductivity}.
We will also prove a version of Lemma \ref{lema:solution_conductivity}
for the vector part $\vec W$ of solutions of the Vekua equation.

To define the Hilbert transform for \eqref{eq:Vekua}, let
$\varphi_0\in H^{1/2}(\partial\Omega,\R)$ be a scalar boundary value
function, and apply Lemma \ref{lema:solution_conductivity} to obtain $W_0$.
The decomposition \eqref{eq:decomposition_T} of the Teodorescu operator
applied to vector fields reduces to
\begin{align*} 
  T_{\Omega}\left[-f^2\nabla(W_0/f)\right] = 
  \Ti\left[-f^2\nabla (W_0/f)\right]+\Tiii\left[-f^2\nabla(W_0/f)\right],
\end{align*}
and both components lie in $W^{1,2}(\Omega)$.   

\begin{definicion}\label{def:f_HT}
  The \textit{Vekua-Hilbert transform}
  \[\Hi_{f}\colon H^{1/2}(\partial\Omega,\R)\to
  H^{1/2}(\partial\Omega,\R^3)\]
  \textit{associated to the main Vekua equation} \eqref{eq:Vekua} is
  given by
\begin{align}\label{eq:Hilbert_general_1-3}
    \Hi_f[\varphi_0] =\vec\alpha-\Hi[\alpha_0],
\end{align}
where $\Hi$ is the Hilbert transform $\Hi$ defined in
\eqref{eq:Hilbert_transform}, the associated Teodorescu traces are
\begin{align}\label{eq:g0gbarra}
  \alpha_0 = \tr\Ti\left[-f^2\nabla(W_0/f) \right],\
  \vec\alpha = \tr\Tiii\left[-f^2\nabla (W_0/f)\right] ,
\end{align} 
and $W_0$ is the solution of the conductivity equation
\eqref{eq:conductivity} satisfying the boundary condition
\eqref{eq:extension_conductivity}.  
\end{definicion}

By the Trace Theorem we have
$\alpha=\alpha_0+\vec\alpha \in H^{1/2}(\partial\Omega,\H)$, and in
fact by \cite[Proposition 3.2]{DelPor2017},
$\vec\alpha\in \Sol(\partial\Omega,{\R}^3)$. Similarly to the Hilbert
transform $\Hi$ for the monogenic case, $\Hi_f$ can be expressed as
\begin{align}\label{eq:Hif}
   \Hi_{f}[\varphi_0]=\vec\alpha - \frac{1}{2}\vecK[h_f]
\end{align}
with the real-valued function  
\begin{align} \label{eq:hf}
   h_f =2(I+K_0)^{-1}\alpha_0 \in H^{1/2}(\partial\Omega,\R).
\end{align}

 The term ``Vekua-Hilbert transform'' is justified by the following.

\begin{teorema}\label{theo:Hf}
  Let $\Omega$ be a bounded Lipschitz domain and let $f\in W^{1,\infty} 
	(\Omega,\R)$ be a proper conductivity. Suppose
  that $\varphi_0 \in H^{1/2}(\partial\Omega,\R)$ . Then the
  quaternionic function
\begin{equation}
   f\varphi_0+(1/f)\Hi_f[\varphi_0]
\end{equation}
is the trace of a solution of the main Vekua equation
\eqref{eq:Vekua}.
\end{teorema}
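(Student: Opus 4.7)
My plan is to exhibit an interior $W\in W^{1,2}(\Omega,\H)$ solving \eqref{eq:Vekua} whose trace is $f\varphi_0+(1/f)\Hi_f[\varphi_0]$. The strategy rests on the equivalence \eqref{eq:systeK_1}: $W=W_0+\vec W$ satisfies \eqref{eq:Vekua} if and only if $W_0$ satisfies the conductivity equation \eqref{eq:conductivity} and the vector field $\vec V:=f\vec W$ satisfies the div-curl system $\div\vec V=0$, $\curl\vec V=-f^2\nabla(W_0/f)$. Thus the task reduces to producing such a $\vec V$ whose boundary trace equals $\Hi_f[\varphi_0]$, for then $W:=W_0+\vec V/f$ will have the claimed trace.

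For the scalar part, Lemma \ref{lema:solution_conductivity} provides $W_0\in W^{1,2}(\Omega,\R)$ with $\tr(W_0/f)=\varphi_0$ satisfying \eqref{eq:conductivity}. Set $\vec h:=-f^2\nabla(W_0/f)\in L^2(\Omega,\R^3)$, which is divergence-free by \eqref{eq:conductivity}, and $u:=T_\Omega[\vec h]\in W^{1,2}(\Omega,\H)$; by \eqref{eq:decomposition_T} this decomposes as $u=g_0+\vec g$ with $g_0=\Ti[\vec h]$, $\vec g=\Tiii[\vec h]$, so $\tr g_0=\alpha_0$ and $\tr\vec g=\vec\alpha$ by \eqref{eq:g0gbarra}. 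Since $Du=\vec h$, reading off scalar and vector parts gives $\div\vec g=0$ and $\nabla g_0+\curl\vec g=\vec h$; applying $D$ once more and using $D^2=-\Delta$ together with $\div\vec h=0$ yields $\Delta g_0=0$ in $\Omega$. Next, set $h_f:=2(I+K_0)^{-1}\alpha_0$ and $\Phi:=F_{\partial\Omega}[h_f]\in W^{1,2}(\Omega,\H)\cap\M(\Omega)$; by the Plemelj-Sokhotski computation \eqref{eq:u+Hu}, $\tr_+\Phi=\alpha_0+\Hi[\alpha_0]$, so $\Sc\Phi|_{\partial\Omega}=\alpha_0=\tr g_0$. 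Since $\Sc\Phi$ is harmonic in $\Omega$ and agrees with the harmonic $g_0$ on $\partial\Omega$, Dirichlet uniqueness gives $\Sc\Phi\equiv g_0$ in $\Omega$; by \eqref{eq:monogenicas_izquierda} this forces $\div\Vec\Phi=0$ and $\curl\Vec\Phi=-\nabla g_0$.

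Define $\vec V:=\vec g-\Vec\Phi$, $\vec W:=\vec V/f$, and $W:=W_0+\vec W$. Then $\div\vec V=0-0=0$ and $\curl\vec V=(\vec h-\nabla g_0)-(-\nabla g_0)=\vec h$, so $W$ solves \eqref{eq:Vekua} by the equivalence above. Finally, $\tr W_0=f\varphi_0$ and
\[\tr\vec W=\frac{1}{f}(\tr\vec g-\tr\Vec\Phi)=\frac{1}{f}(\vec\alpha-\Hi[\alpha_0])=\frac{1}{f}\Hi_f[\varphi_0],\]
as claimed. The main obstacle is the identification $\Sc\Phi\equiv g_0$, which hinges on the non-obvious harmonicity of $g_0$; this is precisely where the conductivity equation for $W_0$ enters in an essential way, and it is also the reason the definition $h_f=2(I+K_0)^{-1}\alpha_0$ matches exactly the Dirichlet data required to correct the curl of $\vec g$ by the monogenic piece $\Vec\Phi$.
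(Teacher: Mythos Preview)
Your proof is correct and follows essentially the same approach as the paper: both construct $f\vec W=\Tiii[\vec h]-\Vec F_{\partial\Omega}[h_f]$ and verify it solves the div-curl system \eqref{eq:systeK_1} with trace $\Hi_f[\varphi_0]$. The paper delegates the div-curl verification to its Appendix Theorem~\ref{theo:div-curl} (which in turn cites \cite{DelPor2017}), whereas you prove the key identity $\Sc F_{\partial\Omega}[h_f]\equiv \Ti[\vec h]$ directly via harmonicity of $\Ti[\vec h]$ and Dirichlet uniqueness---this makes your argument more self-contained but is mathematically the same mechanism.
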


\begin{proof}
  To produce $W=W_0+\vec W\in W^{1,2}(\Omega,\H)$ satisfying
  \eqref{eq:Vekua} such that
\begin{align*} 
  \tr W_0  =  f\varphi_0  ,\ \tr {f\vec W} =  \Hi_f[\varphi_0] ,
\end{align*}
we take the extension $W_0$ of $f\varphi_0$ given by Lemma
  \ref{lema:solution_conductivity}, and define the vector part
  $\vec W$ by
\begin{align}\label{eq:extension_vectorial}
   f\vec W = T_{\Omega}[\vec v]+
	 F_{\partial\Omega}[\Hi_{f}[\varphi_0]] = \Tiii[\vec v]-\Fii[h_f],
\end{align}
with $\vec v=-f^2\nabla (W_0/f)$ and $h_f$ given by \eqref{eq:hf};
recall also \eqref{eq:decomposition_T}--\eqref{eq:F0_F1}. Since
\eqref{eq:Vekua} is equivalent to $\div(f\vec W)=0$,
$\curl(f\vec W)=\vec v$, i.e.\ a div-curl system
\eqref{eq:div_curl_system} with $g_0=0$, $\vec g=\vec v$, it follows
from Theorem \ref{theo:div-curl}
that $W=W_0+\vec W$ is
a solution of \eqref{eq:Vekua}. Further, from \eqref{eq:nt-limit} we
have that
\begin{align}\label{eq:limite_fw}
    \tr {f\vec W}=\alpha_0+\vec\alpha+\tr F_{\partial\Omega}
    [\vec\alpha-\Hi[\alpha_0]]
    =\vec\alpha - \Hi[\alpha_0]   = \Hi_f[\varphi_0] 
\end{align}
as required.
\end{proof}

\begin{remark}\label{rem:f-1}
  When $f\equiv 1$, the transform $\Hi_f$ coincides with the Hilbert
  transform $\Hi$ of the monogenic case. To see this more clearly, first
  note that by \eqref{eq:conductivity}, $W_0$ must be the harmonic
  extension of $\varphi_0$ to $\Omega$; similarly 
	$\alpha_0=-\tr\Ti[\nabla W_0]$ and $\vec\alpha=-\tr\Tiii[\nabla W_0]$
	.  Now \eqref{eq:HT2T0} says
  that $\Hi[\varphi_0]$ is precisely the definition of $\Hi_{f\equiv 1}
	[\varphi_0]$. 
\end{remark}

\begin{remark}
  In \cite[Remark\ 5.6]{DelPor2017} a slightly different definition was
  proposed for $\Hi_{f}$ in terms of the operators $\Ti,\Tiii$ and a
  certain radial integration operator, used in providing a general
  solution to the div-curl system valid in star-shaped domains. In that
  definition it is not possible to show the relationship with the
  monogenic Hilbert transform, because its construction is completely
  interior to domain $\Omega$.
\end{remark}

\subsection{Properties of $\Hi_f$}

\begin{proposicion}\label{prop:kernel-Hf}
  Let $\varphi_0\in H^{1/2}(\partial\Omega,\R)$. Then
  $\varphi_0\in \Ker\Hi_f$ if and only if the associated Teodorescu traces 
	$\alpha_0, \vec\alpha$ vanish.
\end{proposicion}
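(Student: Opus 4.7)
The forward implication is immediate from the definition $\Hi_f[\varphi_0]=\vec\alpha-\Hi[\alpha_0]$: if $\alpha_0$ and $\vec\alpha$ both vanish, so does $\Hi_f[\varphi_0]$.

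For the converse, my plan is to exploit the double expression for $f\vec W$ recorded in \eqref{eq:extension_vectorial}, namely
\[
   f\vec W \;=\; T_\Omega[\vec v] + F_{\partial\Omega}[\Hi_f[\varphi_0]]
            \;=\; \Tiii[\vec v] - \Fii[h_f],
\]
with $\vec v = -f^2\nabla(W_0/f)$ and $h_f=2(I+K_0)^{-1}\alpha_0$. The rightmost representation is manifestly $\R^3$-valued, since $\Tiii$ and $\Fii$ are by construction the vector-valued components of $T_\Omega$ and $F_{\partial\Omega}$ respectively. Equating scalar parts across the middle equality therefore yields, for every $\varphi_0\in H^{1/2}(\partial\Omega,\R)$, the identity
\[
   \Ti[\vec v] \;=\; -\Sc F_{\partial\Omega}[\Hi_f[\varphi_0]] \quad\text{in } \Omega,
\]
valid without any extra hypothesis.

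Assuming now $\Hi_f[\varphi_0]=0$, the displayed identity collapses to $\Ti[\vec v]\equiv 0$ in $\Omega$; taking the boundary trace gives $\alpha_0=0$. Consequently $h_f=2(I+K_0)^{-1}\alpha_0=0$ and $\Hi[\alpha_0]=\tfrac12\vecK[h_f]=0$ by \eqref{eq:Hilbert_transform}. Substituting these into the hypothesis $0=\Hi_f[\varphi_0]=\vec\alpha-\Hi[\alpha_0]$ yields $\vec\alpha=0$, as required.

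The only step that might look delicate is the scalar-part cancellation used to extract the key identity, but this is tautological once one recalls that $\Tiii$ and $\Fii$ take values in $\R^3$; everything else is a short chain of substitutions into Definition \ref{def:f_HT} together with the formula \eqref{eq:Hilbert_transform} for $\Hi$.
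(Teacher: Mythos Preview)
Your argument is correct and actually more elementary than the paper's. The paper establishes the equivalence via the Hodge decomposition $L^2(\Omega,\H)=(\M(\Omega)\cap L^2)\oplus D(W_0^{1,2}(\Omega,\H))$ together with an external result (\cite[Proposition~8.9]{GuHaSpr2008}) characterising membership in $D(W_0^{1,2})$ by vanishing of $\tr T_\Omega$; the chain is $\tr f\vec W=0 \iff D(f\vec W)\in D(W_0^{1,2}) \iff \tr T_\Omega[D(f\vec W)]=0$, i.e.\ $\alpha_0+\vec\alpha=0$. You bypass this machinery entirely: accepting the second equality in \eqref{eq:extension_vectorial} (which the paper has already justified via Theorem~\ref{theo:div-curl}), you read off the scalar-part identity $\Ti[\vec v]=-\Sc F_{\partial\Omega}[\Hi_f[\varphi_0]]$ in $\Omega$, and the rest is a direct substitution. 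Your route is shorter and self-contained within the results already stated; the paper's route has the merit of making explicit the connection to the orthogonal Hodge splitting, which is conceptually informative but not needed for the bare statement. One cosmetic point: you label as ``forward implication'' what is literally the converse (traces vanish $\Rightarrow$ $\varphi_0\in\Ker\Hi_f$), but this does not affect the mathematics.
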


\begin{proof}
  The Hodge decomposition \cite[Theorem 8.7]{GuHaSpr2008} gives the
  orthogonal direct sum
  $L^2(\Omega,\H)=(\M(\Omega)\cap L^2(\Omega,\H))\oplus D( W_0^{1,2}
	(\Omega,\H))$, where the subscript in $W_0^{1,2}$ indicates zero
	trace. Thus
\[ 
   \tr f\vec W =0 \iff D(f\vec W)\in D(W_0^{1,2}(\Omega,\H)) \iff
   \alpha_0=0,\ \vec\alpha=0,
\]
  with $f\vec W$ as in \eqref{eq:extension_vectorial} and where the
  last equivalence follows from the result \cite[Proposition
  8.9]{GuHaSpr2008}, which identifies orthogonality to all monogenic
  functions with the vanishing of the trace of the Teodorescu
  operator.  By \eqref{eq:limite_fw} we have the result.
\end{proof}
 
We will say that the vector part $\vec W$ of $W$ is
\textit{normalized} when it satisfies the boundary condition
\begin{align}\label{eq:normalization}
   \tr f\vec W = \Hi_f[\varphi_0]. 
\end{align}

Let $W=W_0+\vec W$ be an arbitrary solution of the main Vekua equation
\eqref{eq:Vekua}, and write $\varphi_0=\tr W_0$,
$\vec \varphi=\tr \vec W$. Consider
\begin{align*}
  \vec W^* = \vec W - \frac{1}{f}F_{\partial\Omega}[f\vec \varphi
   +  \Hi_{f}[\varphi_0]].
\end{align*}
Then by \eqref{eq:formula_BP}, $ f\vec W^*$ has
the form \eqref{eq:extension_vectorial} and hence satisfies the
normalization condition \eqref{eq:normalization}, with $W_0+\vec W^*$
a solution of \eqref{eq:Vekua}.

On the other hand, let $W^1$ and $W^2$ be two solutions of
\eqref{eq:Vekua} with the same scalar part and with normalized
vector parts. If $\varphi^i=\tr W^i$, $i=1,2$, then
\begin{align*}
   fW^1-fW^2=T_{\Omega}[D(f(W^1-W^2))]+F_{\partial\Omega}[f\varphi^1
	 -f\varphi^2]=0,
\end{align*}
since $f(W^1-W^2)$ is monogenic. Therefore $W^1=W^2$; i.e.\ there
is only one normalized vector part for a given scalar part of
a solution of the main Vekua equation.

Some important facts about the solvability and regularity of the
conductivity equation \eqref{eq:conductivity} permit us to prove the
boundedness of $\Hi_f$:
\begin{teorema}\label{theo:H_f_bounded}
  Let $\Omega$ be a bounded Lipschitz domain and let
  $f\in W^{1,\infty} (\Omega,\R)$ be a proper conductivity. Then the
  Vekua-Hilbert transform $\Hi_f \colon H^{1/2}(\partial\Omega,\R)$
  $\to H^{1/2}(\partial\Omega,\R^3)$ is a bounded operator, as are
  also the associated Teodorescu traces
  $\varphi_0 \longmapsto \alpha_0$ and
  $ \varphi_0 \longmapsto \vec \alpha$ from
  $H^{1/2}(\partial\Omega,\R)$ to $H^{1/2}(\partial\Omega,\R)$ and
  $H^{1/2}(\partial\Omega,\R^3)$, respectively.
\end{teorema}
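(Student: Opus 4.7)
The plan is to chain together the a priori estimates already established in the paper. Given $\varphi_0\in H^{1/2}(\partial\Omega,\R)$, Lemma \ref{lema:solution_conductivity} yields a unique $W_0\in W^{1,2}(\Omega,\R)$ solving the conductivity equation with $\tr(W_0/f)=\varphi_0$, together with the estimate $\|W_0/f\|_{W^{1,2}}\leq C_{\Omega,\rho(f)}\|\varphi_0\|_{H^{1/2}}$. Hence $\nabla(W_0/f)\in L^2(\Omega,\R^3)$ with the same control, and because $f$ is a proper conductivity in $W^{1,\infty}(\Omega,\R)$, multiplication by $f^2$ is a bounded endomorphism of $L^2$. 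Consequently the vector field $\vec v:=-f^2\nabla(W_0/f)$ satisfies
\[
   \|\vec v\|_{L^2}\leq \|f\|_{L^\infty}^{2}\,\|\nabla(W_0/f)\|_{L^2}\leq C\,\|\varphi_0\|_{H^{1/2}}.
\]

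Next, I apply Theorem \ref{theo:T_acotado}(a) with $p=2$: the Teodorescu transform $T_\Omega\colon L^2(\Omega,\H)\to W^{1,2}(\Omega,\H)$ is continuous. Extracting the scalar and vector components of the decomposition \eqref{eq:decomposition_T} shows that $\Ti[\vec v]$ and $\Tiii[\vec v]$ lie in $W^{1,2}(\Omega)$ with norm controlled by $\|\vec v\|_{L^2}$. Composing with the bounded trace operator $\tr\colon W^{1,2}(\Omega,\H)\to H^{1/2}(\partial\Omega,\H)$ provided by the Trace Theorem, I obtain
\[
    \|\alpha_0\|_{H^{1/2}(\partial\Omega,\R)}+\|\vec\alpha\|_{H^{1/2}(\partial\Omega,\R^3)}\leq C'\,\|\varphi_0\|_{H^{1/2}(\partial\Omega,\R)},
\]
which already establishes the boundedness of both Teodorescu-trace maps $\varphi_0\mapsto\alpha_0$ and $\varphi_0\mapsto\vec\alpha$.

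Finally, for the boundedness of $\Hi_f$ itself, I invoke Theorem \ref{theo:H_bounded_noncompact} in the case $p=2$, which applies for any bounded Lipschitz $\Omega$ since $2<2+\epsilon(\Omega)$, to conclude that $\Hi\colon H^{1/2}(\partial\Omega,\R)\to H^{1/2}(\partial\Omega,\R^3)$ is bounded. Combining this with the previous step and Definition \ref{def:f_HT} gives
\[
   \|\Hi_f[\varphi_0]\|_{H^{1/2}}\leq \|\vec\alpha\|_{H^{1/2}}+\|\Hi[\alpha_0]\|_{H^{1/2}}\leq C''\,\|\varphi_0\|_{H^{1/2}},
\]
as required. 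I do not anticipate any real obstacle: the theorem is a direct composition of the continuity of the conductivity Dirichlet solution operator, of $T_\Omega$ and the trace map on $W^{1,2}$, and of the monogenic Hilbert transform on $H^{1/2}$. The only step requiring a brief check is that $\rho(f)<\infty$ and $f\in W^{1,\infty}$ together force $f^2\in L^\infty(\Omega)$ so that multiplication by $f^2$ preserves $L^2$, furnishing a legitimate input for $T_\Omega$.
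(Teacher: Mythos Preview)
Your proposal is correct and follows essentially the same approach as the paper's own proof: both chain together Lemma~\ref{lema:solution_conductivity}, the $L^\infty$ bound on $f^2$, the continuity of $T_\Omega\colon L^2\to W^{1,2}$ from Theorem~\ref{theo:T_acotado}(a), the Trace Theorem, and the boundedness of $\Hi$ on $H^{1/2}$ from Theorem~\ref{theo:H_bounded_noncompact}. Your remark that $p=2$ automatically satisfies $2<2+\epsilon(\Omega)$ so that Theorem~\ref{theo:H_bounded_noncompact} applies on any bounded Lipschitz domain is a helpful clarification that the paper leaves implicit.
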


\begin{proof}
  Let $\varphi_0 \in H^{1/2}(\partial\Omega,\R)$.  By Lemma
  \ref{lema:solution_conductivity}, take $W_0\in W^{1,2}(\Omega,\R)$
  satisfying \eqref{eq:conductivity}--\eqref{eq:extension_conductivity}.
  Since both $T_{\Omega} \colon L^2(\Omega)\to W^{1,2}(\Omega)$ and
  $\tr\colon W^{1,2}(\Omega) \to H^{1/2}(\partial\Omega)$ are continuous, 
	  by \eqref{eq:minimiza_W_0} we have
\begin{align}\label{eq:bound_1}
    \|\alpha_0+\vec\alpha\|_{H^{1/2}(\partial\Omega)}
    \nonumber &\leq \|\tr\|\|T_{\Omega}\| \|f^2\nabla(W_0/f)\|_
		{L^2(\Omega)}\\
    \nonumber&\leq \|\tr\|\|T_{\Omega}\| \|f\|^2_{L^{\infty}(\Omega)}
		\|W_0/f\|_{W^{1,2}(\Omega)}\\
		&\leq C_{\Omega,\rho(f)}\|\tr\| \|T_{\Omega}\| \|f^2\|_{L^{\infty}(\Omega)} \|\varphi_0\|_{H^{1/2}(\partial\Omega)}.
\end{align}
From this follows the continuity of $\alpha_0$ and $\vec \alpha$.

By the continuity of the Hilbert transform (Theorem \ref{theo:H_bounded_noncompact}),
\begin{align}\label{eq:bound_2}
  \|\Hi[\alpha_0]\|_{H^{1/2}(\partial\Omega)}
   \leq \|\Hi\| \|\alpha_0\|_{H^{1/2}(\partial\Omega)}.
\end{align}
Using the inequalities \eqref{eq:bound_1}--\eqref{eq:bound_2}, we have that
\begin{align*}
   \nonumber \|\Hi_f[\varphi_0]\|_{H^{1/2}(\partial\Omega)}&=
	 \|\vec\alpha-\Hi[\alpha_0]\|_{H^{1/2}(\partial\Omega)}\\
   \nonumber&\leq \max(1,\|\Hi\|) \|\alpha_0+\vec\alpha\|_{H^{1/2}(
	 \partial\Omega)}\\
   \nonumber&\leq C_{\Omega,\rho(f)} \max(1,\|\Hi\|) \|\tr\| 
	 \|T_{\Omega}\| \|f^2\|_{L^{\infty}(\Omega)} \|\varphi_0\|_
	 {H^{1/2}(\partial\Omega)}.
	 \end{align*} 
Therefore $\Hi_f$ is continuous.
\end{proof}

Analogous to the estimates for the solutions to the conductivity
equation \eqref{eq:minimiza_W_0}, we have

\begin{proposicion}\label{prop:estimation-vectorial}
  Let $\Omega$ be a $ C^{1,1}$ bounded Lipschitz domain and let
  $f\in W^{1,\infty}(\Omega,\R)$ be a proper conductivity. Suppose
  that $\varphi_0\in H^{1/2}(\partial\Omega,\R)$. Then the vector
  extension given by \eqref{eq:extension_vectorial} satisfies
\begin{align}\label{eq:extension-bounded}
   \|f\vec W\|_{W^{1,2}(\Omega)}\leq C_{\Omega,\rho(f)}^{*}
	 \|\varphi_0\|_{H^{1/2}(\partial\Omega)} 
\end{align}
where $C_{\Omega,\rho(f)}^{*}$ depends only on $\Omega$ and $\rho(f)$.
\end{proposicion}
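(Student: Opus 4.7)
The plan is to bound the two terms in the integral representation \eqref{eq:extension_vectorial} separately. Writing
\[
  f\vec W = T_\Omega[\vec v] + F_{\partial\Omega}[\Hi_f[\varphi_0]],
\]
with $\vec v = -f^2\nabla(W_0/f)$, I would apply the triangle inequality and then estimate each summand in $W^{1,2}(\Omega)$ using continuity properties already in hand.

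For the Teodorescu term, Theorem \ref{theo:T_acotado}(a) says that $T_\Omega\colon L^2(\Omega,\H)\to W^{1,2}(\Omega,\H)$ is continuous, so
\[
 \|T_\Omega[\vec v]\|_{W^{1,2}}
 \le \|T_\Omega\|\,\|\vec v\|_{L^2}
 \le \|T_\Omega\|\,\|f\|_{L^\infty}^2\,\|W_0/f\|_{W^{1,2}},
\]
and Lemma \ref{lema:solution_conductivity} bounds $\|W_0/f\|_{W^{1,2}}$ by $C_{\Omega,\rho(f)}\|\varphi_0\|_{H^{1/2}}$. Since $f\in W^{1,\infty}$ is a proper conductivity, $\|f\|_{L^\infty}^2$ is controlled by $\rho(f)$, giving an estimate of the desired form for this first piece.

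For the Cauchy term, the continuity statement \eqref{eq:Cauchy-operator} with $p=2$ gives a bounded operator $F_{\partial\Omega}\colon H^{1/2}(\partial\Omega,\H)\to W^{1,2}(\Omega,\H)\cap\M(\Omega)$, and Theorem \ref{theo:H_f_bounded} provides $\|\Hi_f[\varphi_0]\|_{H^{1/2}}\le C_{\Omega,\rho(f)}\|\varphi_0\|_{H^{1/2}}$; composing,
\[
 \|F_{\partial\Omega}[\Hi_f[\varphi_0]]\|_{W^{1,2}}
 \le C_{\Omega,\rho(f)}\,\|\varphi_0\|_{H^{1/2}}.
\]
Adding the two bounds and collecting constants into a single $C^{*}_{\Omega,\rho(f)}$ yields \eqref{eq:extension-bounded}.

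There is no serious obstacle: every ingredient has already been proved, and the proof is essentially bookkeeping of constants. One subtle point worth checking is that $C^{1,1}$ regularity of $\partial\Omega$ is not actually used in this integral-representation approach (the hypotheses of Theorems \ref{theo:T_acotado}(a), \ref{theo:H_f_bounded}, and of \eqref{eq:Cauchy-operator} for $p=2$ need only the Lipschitz assumption). An alternative route, perhaps closer in spirit to the $\div$--$\curl$ framework of Section \ref{subsec:D-N-monogenic}, is to apply Friedrichs' inequality (Proposition \ref{prop:Friedrichs-inequalities}) to $f\vec W$, using $\div(f\vec W)=0$, $\curl(f\vec W)=\vec v$, and $\gamma\nrm(f\vec W)=\Hi_f[\varphi_0]\cdot\eta\in H^{1/2}(\partial\Omega)$; this is exactly where the $C^{1,1}$ hypothesis becomes relevant, since Friedrichs in the form stated in Proposition \ref{prop:Friedrichs-inequalities} requires that regularity. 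Either way the bound on $\|f\vec W\|_{L^2}$ needed to close Friedrichs can itself be extracted from the integral representation above.
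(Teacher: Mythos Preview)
Your argument is correct and in fact more direct than the paper's own proof. The paper proceeds exactly along the \emph{alternative} route you sketch at the end: it first bounds $\|T_\Omega[-f^2\nabla(W_0/f)]\|_{W^{1,2}}$ as you do, but for the Cauchy piece it only extracts an $L^2$ bound on $F_{\partial\Omega}[h_f]$ (via $F_{\partial\Omega}[h_f]=\div M[\eta h_f]-\curl M[\eta h_f]$ and the $H^{-1/2}\to W^{1,2}$ continuity of $M$), combines the two to control $\|f\vec W\|_{L^2}$, and then invokes the first Friedrichs inequality of Proposition~\ref{prop:Friedrichs-inequalities} together with $\div(f\vec W)=0$, $\curl(f\vec W)=-f^2\nabla(W_0/f)$, and $\gamma\nrm(f\vec W)=\Hi_f[\varphi_0]\cdot\eta$ to upgrade to the full $W^{1,2}$ norm.

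Your shortcut is to use the $H^{1/2}\to W^{1,2}$ continuity of $F_{\partial\Omega}$ recorded in \eqref{eq:Cauchy-operator} directly on $\Hi_f[\varphi_0]$, bypassing Friedrichs entirely. This is cleaner and, as you correctly observe, needs only the Lipschitz hypothesis; the $C^{1,1}$ assumption in the statement is there precisely because the paper's chosen proof goes through Proposition~\ref{prop:Friedrichs-inequalities}. What the paper's approach buys is that it makes the div--curl structure of $f\vec W$ explicit and ties the estimate visibly to $\Hi_f$ through the normal trace, which fits the narrative of Section~\ref{subsec:D-N-monogenic}; what your approach buys is a shorter proof under a weaker boundary regularity assumption.
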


\begin{proof}
  Let $W_0\in W^{1,2}(\Omega,\R)$ be the unique solution of
  \eqref{eq:conductivity}--\eqref{eq:extension_conductivity}. Then as in
  \eqref{eq:bound_1},
\begin{align}\label{eq:bound1}
   \|T_{\Omega}[-f^2\nabla (W_0/f)]\|_{W^{1,2}(\Omega)}\leq C_{\Omega,
	 \rho(f)} \|T_{\Omega}\|\|f^2\|_{W^{1,\infty}(\Omega)}
         \|\varphi_0\|_{H^{1/2}(\partial\Omega)}.
\end{align}
By \eqref{eq:F0_F1-div-curl} together with the fact that 
$\|\curl \vec u\|_{L^2}^2+\|\div \vec u\|_{L^2}^2\leq
3\sum_{i=1}^3{\|\nabla(u_i)\|_{L^2}^2}$
for every $\vec u\in W^{1,2}(\Omega)$ and Theorem \ref{theo:T_acotado}(c),
\begin{align}\label{eq:bound2}
   \nonumber\|F_{\partial\Omega}[h_f]\|_{L^2(\Omega)}&=
   \|\div M[\eta h_f]-\curl M[\eta h_f]\|_{L^2(\Omega)}\\
   \nonumber&\leq \sqrt{3}\|\nabla M[\eta h_f]\|_{L^2(\Omega)}\\
   \nonumber&\leq \sqrt{3}\|M[\eta h_f]\|_{W^{1,2}(\Omega)}\\
   &\leq \sqrt{3} C_1 \|M\| \|(I+K_0)^{-1}\|
	 \|\eta\|_{W^{1,\infty}(\Omega)} \|\varphi_0\|_{{H^{1/2}(
	 \partial\Omega)}},
\end{align}
where the constant $C_1$ in the last inequality comes from the fact
that both
$(I+K_0)^{-1}\colon H^{1/2}(\partial\Omega)\to
H^{1/2}(\partial\Omega)$
and $\varphi_0\mapsto \alpha_0$ are bounded (see Proposition
\ref{prop:invertibility-I+K_0}(c) and Theorem
\ref{theo:H_f_bounded}). By \eqref{eq:bound1} and \eqref{eq:bound2}
and by the fact that
$f\vec W=T_{\Omega}[-f^2\nabla (W_0/f)]-F_{\partial\Omega}[h_f]$ from
\eqref{eq:extension_vectorial},
\[
   \|f\vec W\|_{L^2(\Omega)}\leq C_2
	 \|\varphi_0\|_{H^{1/2}(\partial\Omega)},
\] 
where
$C_2=C_{\Omega,\rho(f)} \|T_{\Omega}\|\|f^2\|_{W^{1,\infty}(\Omega)} +
\sqrt{3} C_1 \|M\| \|(I+K_0)^{-1}\| \|\eta\|_{W^{1,\infty}(\Omega)}$.

By the first Friedrichs inequality provided in  Proposition
\ref{prop:Friedrichs-inequalities}, using the div-curl system
\eqref{eq:systeK_1} and the boundedness of the Vekua-Hilbert transform
$\Hi_f$, we have
\begin{align*}
   \|f\vec W\|_{W^{1,2}(\Omega)}^2 &\leq \|f\vec W\|_{L^2(\Omega)}^2+     
	 \|\curl (f\vec W)\|_{L^2(\Omega)}^2 + \|\Hi_f[\varphi_0]\cdot \eta\|_
	 {H^{1/2}(\partial\Omega)}^2\\
   &\leq C_2 \|\varphi_0\|_{H^{1/2}(\partial\Omega)}
	 ^2+\|f^2\nabla(W_0/f)\|_{L^2(\Omega)}^2 \\
	 & \qquad + \|\Hi_f\|^2 \|\eta\|_{W^{1,\infty}(\Omega)}^2 \|\varphi_0\|_
	 {H^{1/2}(\partial\Omega)}^2\\
   &\leq C^{*2}_{\Omega,\rho(f)} \|\varphi_0\|_{H^{1/2}(\partial\Omega)}^2,
\end{align*}
where
$C^{*2}_{\Omega,\rho(f)}=C_2^2+C_{\Omega,\rho(f)}^2\|f^2\|_{L^{\infty(\Omega)}}^2+\|\Hi_f\|^2
\|\eta\|_{W^{1,\infty}(\Omega)}^2$.
\end{proof}

\section{Dirichlet-to-Neumann map for the conductivity
  equation\label{sec:Dirichlet-Neumann}}

The \textit{Dirichlet-to-Neumann map} for the conductivity equation
is \begin{align}\label{eq:D-N}
   \nonumber \Lambda_{0,f^2}\colon H^{1/2}(\partial\Omega,\R)&
   \to H^{-1/2}(\partial\Omega,\R),\\
   \varphi_0 &\mapsto -f^2 \nabla(W_0/f)|_{\partial\Omega} \cdot\eta .
\end{align}
Here $\eta$ is again the unit outer normal vector to $\partial\Omega$
and $W_0\in W^{1,2}(\Omega,\R)$ is the unique extension of
$f\varphi_0$ as a solution of the conductivity equation
\eqref{eq:conductivity} given in Lemma
\ref{lema:solution_conductivity}.  For $f^2$ smooth,
$\Lambda_{0,f^2}[\varphi_0]$ is well-defined pointwise, but for
general proper conductivities, the D-N map is only weakly defined by
the relation
\begin{align}
   \langle \Lambda_{0,f^2} [\varphi_0], \tr v_0 \rangle_{\partial\Omega} 
   = -\int_{\Omega}{f^2 \nabla(W_0/f)\cdot \nabla v_0 \,d\vec y},
\end{align}
where $\nabla\cdot f^2 \nabla (W_0/f)=0$, $\tr W_0=f\varphi_0$ and
$v_0\in W^{1,2}(\Omega,\R)$. One reference for the scalar D-N map is
\cite{Salo2008}. This map is an essential part of the solution of the
Calder\'{o}n problem \cite{Calderon1980}, that is, to recover the
pointwise conductivity $f^2$ interior to the domain $\Omega$ from
electrical current measurements on the boundary $\partial\Omega$.

We will follow the definition given in \cite{SU1987}, but we write
$\Lambda_{0,f^2}$ rather than $\Lambda_{f^2}$ to emphasize the scalar
nature of this quantity.

\subsection{Quaternionic Dirichlet-to-Neumann map\label{subsec:quatDNmap}}
In analogy to \eqref{eq:quaternionic-D-N-monogenic} we introduce  the
following.
\begin{definicion} The \textit{quaternionic Dirichlet-to-Neumann map}
for the conductivity equation is defined strongly by 
\begin{align}\label{operator-D-N-quaternionic}
   \Lambda_{f^2}[\varphi_0] = (f^2D(W_0/f)|_{\partial\Omega})\eta
\end{align}
for functions $\varphi_0$ whose interior extension $W_0$ satisfy the
conductivity equation \eqref{eq:extension_conductivity}.
\end{definicion}

\begin{teorema} \label{theo:weakDNVekua} The weak definition of
$\Lambda_{f^2}\colon H^{1/2}(\partial\Omega,\R) \to
   H^{-1/2}(\partial\Omega,\H)$
 is given by
\begin{align}\label{weak-quaternionic-D-N}
    \nonumber \langle \Lambda_{f^2} [\varphi_0], \tr v \rangle_{\partial 
		\Omega}&=\Sc \int_{\Omega}{f^2 \nabla(W_0/f)
		f(\Dr+(Df/f))[v_0/f] \, d\vec y}\\
	  &+\Sc \int_{\Omega}{f^2\nabla(W_0/f) \frac{1}{f}
		(\Dr-M^{Df/f}C_{\H})[f\vec v]\, d\vec y},		
\end{align}
for every $v\in W^{1,2}(\Omega,\H)$, where $W_0$ is the solution of
\eqref{eq:conductivity} with boundary values
\eqref{eq:extension_conductivity}, $\Dr W=WD$, and $M^{(\cdot)}$
denotes quaternionic multiplication from the right.
\end{teorema}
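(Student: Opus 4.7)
The plan is to pass from the strong formula \eqref{operator-D-N-quaternionic} to the weak formula \eqref{weak-quaternionic-D-N} in two stages. First I would apply the Green-type identities \eqref{eq:normal-tangential-trace} to the solenoidal vector field $\vec u=f^2\nabla(W_0/f)$; then I would rewrite the resulting volume integrals in the ``twisted'' form displayed on the right-hand side via purely algebraic manipulations in $\H$.

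For the analytic stage, write $\Lambda_{f^2}[\varphi_0]=\vec u\,\eta=-\vec u\cdot\eta+\vec u\times\eta=-\gamma\nrm(\vec u)+\gamma\tng(\vec u)$. Since the scalar-part duality pairing decouples scalar-scalar and vector-vector contributions, testing against $\tr v=v_0+\vec v$ gives
\[
\langle\Lambda_{f^2}[\varphi_0],\tr v\rangle_{\partial\Omega}
 =-\langle\gamma\nrm(\vec u),v_0\rangle+\langle\gamma\tng(\vec u),\vec v\rangle.
\]
Applying \eqref{eq:normal-tangential-trace} and using the conductivity equation $\div\vec u=0$ reduces this pairing to
\[
-\int_\Omega\vec u\cdot\nabla v_0\,d\vec y
 +\int_\Omega\vec u\cdot\curl\vec v\,d\vec y
 -\int_\Omega\curl\vec u\cdot\vec v\,d\vec y.
\]

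For the algebraic stage I would verify separately that each of these three integrals corresponds to a term on the right of \eqref{weak-quaternionic-D-N}. The scalar piece is immediate: a direct calculation gives $\nabla(v_0/f)+(Df/f)(v_0/f)=(1/f)\nabla v_0$, hence $f(\Dr+(Df/f))[v_0/f]=\nabla v_0$, and $\Sc[\vec u\,\nabla v_0]=-\vec u\cdot\nabla v_0$. The vector piece requires a right-sided Leibniz expansion $\Dr[f\vec v]=\vec v\,Df+f\Dr[\vec v]=-\vec v\cdot Df-f\div\vec v+\vec v\times Df-f\curl\vec v$, combined with $M^{Df/f}C_{\H}[f\vec v]=\overline{f\vec v}(Df/f)=\vec v\cdot Df-\vec v\times Df$; subtracting and dividing by $f$ yields
\[
\tfrac{1}{f}(\Dr-M^{Df/f}C_{\H})[f\vec v]
 =-\div\vec v-\tfrac{2\vec v\cdot Df}{f}-\curl\vec v+\tfrac{2\vec v\times Df}{f}.
\]
Taking the scalar part of the quaternionic left-product by $\vec u$ produces $\vec u\cdot\curl\vec v-2f\nabla(W_0/f)\cdot(\vec v\times\nabla f)$; using $\curl\vec u=\curl(f^2\nabla(W_0/f))=2f\nabla f\times\nabla(W_0/f)$ together with the cyclic property of the scalar triple product converts the second summand into $-\curl\vec u\cdot\vec v$, closing the identification.

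The main obstacle I anticipate is the quaternionic bookkeeping: keeping signs straight in $\vec a\vec b=-\vec a\cdot\vec b+\vec a\times\vec b$, placing the derivative of the scalar factor $f$ to the \emph{left} of the differentiated vector when expanding $\Dr[f\vec v]$, and recognizing that the ``extra'' term $-\int\curl\vec u\cdot\vec v$ produced by integration by parts is precisely the cross-product manufactured by the twist $M^{Df/f}C_{\H}$. Once these identifications are in place, the continuity of $\Lambda_{f^2}[\varphi_0]$ into $H^{-1/2}(\partial\Omega,\H)$ follows because each volume integral on the right depends continuously on $v\in W^{1,2}(\Omega,\H)$, by the $L^2$ bound on $f^2\nabla(W_0/f)$ furnished by Lemma \ref{lema:solution_conductivity}.
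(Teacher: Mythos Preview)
Your proposal is correct and follows precisely the route the paper sketches: the paper does not give a full proof but remarks that it ``is an exercise in vector calculus, based on the ideas of \eqref{weak-monogenic-D-N}'' together with the Green-type identity
\[
\langle \vec\Lambda_{f^2}[\varphi_0],\tr\vec v\rangle_{\partial\Omega}
 =\int_\Omega f^2\nabla(W_0/f)\cdot\curl\vec v\,d\vec y
  -\int_\Omega\curl(f^2\nabla(W_0/f))\cdot\vec v\,d\vec y,
\]
which is exactly your starting point via \eqref{eq:normal-tangential-trace}. Your algebraic verifications (that $f(\Dr+Df/f)[v_0/f]=\nabla v_0$, the right-Leibniz expansion of $\Dr[f\vec v]$, and the identification of the cross term with $\curl\vec u\cdot\vec v$ through $\curl(f^2\nabla(W_0/f))=2f\nabla f\times\nabla(W_0/f)$) are the details the paper omits, so your write-up is in fact more complete than the paper's.
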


With the notation $\Lambda_{f^2}=\Lambda_{0,f^2}+\vec\Lambda_{f^2}$
we can express \eqref{weak-quaternionic-D-N} as
\[ \langle \Lambda_{f^2} [\varphi_0], \tr v \rangle_{\partial\Omega} 
 = \langle \Lambda_{0,f^2}[\varphi_0], \tr v_0 \rangle_{
	  \partial\Omega}+\langle \vec \Lambda_{f^2}[\varphi_0], \tr \vec v 
		\rangle_{\partial\Omega},
\]
where the scalar part is indeed the D-N map $\Lambda_{0,f^2}$ of
\eqref{eq:D-N}, and the vector part (or \textit{tangential D-N map})
is
\begin{equation} \label{eq:DNtangencial}
\vec\Lambda_{f^2}[\varphi_0]=
 f^2\nabla(W_0/f)|_{\partial\Omega}\times\eta.
\end{equation}
The operator $\Dr-M^{Df/f}C_{\H}$ appearing in the second integral has
been called \cite{KravTremb2011} the ``Bers derivative'' of solutions
of \eqref{eq:Vekua}. When $f\equiv 1$, \eqref{weak-quaternionic-D-N}
reduces to \eqref{weak-monogenic-D-N}.  The proof of Theorem
\ref{theo:weakDNVekua} is an exercise in vector calculus, based on the
ideas of \eqref{weak-monogenic-D-N}, and the observation that by
Green's formula,
\begin{align*}
   \langle \vec\Lambda_{f^2}[\varphi_0], \tr\vec v \rangle_{\partial\Omega}
   = \int_{\Omega}f^2 \nabla(W_0/f) \cdot \curl\vec v \, d\vec y -
   \int_{\Omega} \curl(f^2 \nabla(W_0/f)) \cdot \vec v \, d\vec y.
\end{align*}  
 
\begin{proposicion}\label{prop:DN-continuous}
  Let $\Omega$ be a bounded domain with Lipschitz boundary, and let
  $f^2\in W^{1,\infty}( \Omega,\R)$ be a proper conductivity. The D-N
  map $\Lambda_{f^2}$ is continuous from $H^{1/2}(\partial\Omega,\R)$
  to the dual space $H^{-1/2}(\partial\Omega,\H)$ of
  $H^{1/2}(\partial\Omega,\H)$.
\end{proposicion}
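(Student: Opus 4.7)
The plan is to exploit the weak formula for $\Lambda_{f^2}$ provided by Theorem~\ref{theo:weakDNVekua} together with the duality characterization of $H^{-1/2}(\partial\Omega,\H)$ as the topological dual of $H^{1/2}(\partial\Omega,\H)$. Since the trace operator $\tr\colon W^{1,2}(\Omega,\H)\to H^{1/2}(\partial\Omega,\H)$ is surjective with a bounded right inverse, it suffices to produce a constant $C$, depending only on $\Omega$, $\|f^2\|_{W^{1,\infty}(\Omega)}$ and $\rho(f)$, such that
\[
|\langle \Lambda_{f^2}[\varphi_0], \tr v\rangle_{\partial\Omega}|\leq C\,\|\varphi_0\|_{H^{1/2}(\partial\Omega)}\,\|v\|_{W^{1,2}(\Omega)}
\]
for every $v\in W^{1,2}(\Omega,\H)$. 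Optimizing over $v$ with prescribed boundary trace then yields the asserted continuity into $H^{-1/2}(\partial\Omega,\H)$.

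To obtain this estimate I factor each of the two integrands in \eqref{weak-quaternionic-D-N} as a product of the ``gradient factor'' $f^2\nabla(W_0/f)$ and an ``admissible test expression'' depending only on $v$. The common factor is controlled by Lemma~\ref{lema:solution_conductivity}: since $f\in W^{1,\infty}(\Omega,\R)$ is a proper conductivity, \eqref{eq:minimiza_W_0} yields
\[
\|f^2\nabla(W_0/f)\|_{L^2(\Omega)}\leq \|f^2\|_{L^{\infty}(\Omega)}\,\|W_0/f\|_{W^{1,2}(\Omega)}\leq C_{\Omega,\rho(f)}\,\|f^2\|_{L^{\infty}(\Omega)}\,\|\varphi_0\|_{H^{1/2}(\partial\Omega)}.
\]
For the test expressions I observe that $1/f$, $f$ and $Df/f$ all lie in $L^{\infty}(\Omega)$ (the last because $f\in W^{1,\infty}$ and $\rho(f)<\infty$), so the multiplications $v_0\mapsto v_0/f$ and $\vec v\mapsto f\vec v$ are bounded self-maps of $W^{1,2}$, while $\Dr$ and right-multiplication by $Df/f$ each map $W^{1,2}$ into $L^2$ continuously. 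Combining these observations gives
\[
\|f(\Dr+Df/f)[v_0/f]\|_{L^2(\Omega)}+\left\|\tfrac{1}{f}(\Dr-M^{Df/f}C_{\H})[f\vec v]\right\|_{L^2(\Omega)}\leq C(\rho(f),\|f\|_{W^{1,\infty}})\,\|v\|_{W^{1,2}(\Omega)}.
\]
A single application of the Cauchy--Schwarz inequality to each of the two integrals in \eqref{weak-quaternionic-D-N}, followed by multiplication of the two estimates above, produces the bound displayed at the outset.

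The main obstacle I anticipate is the careful bookkeeping required to verify that the ``Bers derivative'' operator $\Dr-M^{Df/f}C_{\H}$ (and its scalar counterpart $\Dr+Df/f$) is bounded from $W^{1,2}(\Omega,\cdot)$ into $L^2(\Omega,\H)$ with constants depending only on $\rho(f)$ and $\|f\|_{W^{1,\infty}}$, so that the resulting estimate on $\Lambda_{f^2}$ is controlled by the same quantities that govern $W_0/f$ in Lemma~\ref{lema:solution_conductivity}. Once this algebraic step is handled, the continuity of $\Lambda_{f^2}$ reduces to a routine combination of Cauchy--Schwarz, the conductivity estimate \eqref{eq:minimiza_W_0}, and the Trace Theorem.
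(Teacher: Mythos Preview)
Your argument is correct, and it takes a genuinely different route from the paper's own proof. The paper does not work with the weak formula \eqref{weak-quaternionic-D-N} at all; instead it splits $\Lambda_{f^2}=\Lambda_{0,f^2}+\vec\Lambda_{f^2}$, identifies these as the normal and tangential traces $\gamma\nrm$, $\gamma\tng$ of the vector field $g=f^2\nabla(W_0/f)$, and then invokes the ready-made boundedness of $\gamma\nrm\colon W^{2,\div}\to H^{-1/2}$ and $\gamma\tng\colon W^{2,\curl}\to H^{-1/2}$ from \cite{Dautray1985}. The required membership $g\in W^{2,\div}\cap W^{2,\curl}$ comes from the conductivity equation ($\div g=0$) and the identity $\curl g=\nabla f^2\times\nabla(W_0/f)$, both controlled in $L^2$ by \eqref{eq:minimiza_W_0}.

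Your approach effectively re-proves those trace estimates by hand: you pair against $\tr v$, push the estimate back to $W^{1,2}(\Omega)$ via the bounded right inverse of $\tr$, and bound the integrals by Cauchy--Schwarz. This is more self-contained (no appeal to the $W^{2,\div}$/$W^{2,\curl}$ trace machinery), at the cost of the algebraic bookkeeping you flag around the Bers-type operators; the paper's route is shorter because that bookkeeping is hidden inside the cited trace theorems. Both arguments rest on the same two analytic inputs---the conductivity estimate \eqref{eq:minimiza_W_0} and the fact that $f,1/f,Df/f\in L^\infty$---so neither buys a stronger conclusion than the other.
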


\begin{proof}
  Let $\varphi_0\in H^{1/2}(\partial\Omega,\R)$. Since
  $\Lambda_{0,f^2}[\varphi_0]$ and $\vec\Lambda_{f^2}[\varphi_0]$ are
  the normal and tangential traces respectively of
  $f^2\nabla(W_0/f)\in W^{2,\div}(\Omega,\R^3)\cap W^{2,\curl}
  (\Omega,\R^3)$, by \eqref{eq:minimiza_W_0},
\begin{align}\label{bound_extension}
   \|\nabla(W_0/f)\|_{L^2} \le  \|W_0/f\|_{W^{1,2}}
   \le C_{\Omega,\rho(f)} \|\varphi_0\|_{H^{1/2}}.
\end{align}
By \cite[Theorems 1, 2]{Dautray1985}, we have that $\gamma\nrm$ and $\gamma\tng$ are bounded linear mappings from $W^{2,\div}(\Omega,\R^3)$ to $\H^{-1/2}(\Omega,\R)$ and from $W^{2,\curl}(\Omega,\R^3)$ to $\H^{-1/2}(\Omega,\R^3)$, so
\begin{align*}
   \|\Lambda_{0,f^2}[\varphi_0]\|_{H^{-1/2}}&\leq \|\gamma\nrm\| \|f^2
	 \nabla(W_0/f)
	 \|_{L^2},\\
   \|\vec \Lambda_{f^2}[\varphi_0]\|_{H^{-1/2}}&\leq \|\gamma\tng\| \big(
	 \|f^2\nabla(W_0/f)\|_{L^2}+\|\curl(f^2\nabla(W_0/f))\|_{L^2} \big).
\end{align*}
Since 
\begin{align}\label{eq:inequality-curl}
   \|\curl(f^2\nabla(W_0/f))\|_{L^2}\leq \|\nabla f^2\|_{L^{\infty}}
	 \|\nabla(W_0/f)\|_{L^2}, 
\end{align}
by \eqref{bound_extension} we have
\begin{align}\label{bound-curl}
   \|\vec \Lambda_{f^2}[\varphi_0]\|_{H^{-1/2}}\leq C_{\Omega,\rho(f)}
	\|\gamma\tng\|\|f^2\|_{W^{1,\infty}}\|\varphi_0\|_{H^{1/2}}.
\end{align}
Then
\[  \|\Lambda_{f^2}[\varphi_0]\|_{H^{-1/2}(\partial\Omega,\H)}\leq C_3
	 \|\varphi_0\|_{H^{1/2}(\partial\Omega,\R)}   
\]
where $C_3=C_{\Omega,\rho(f)}(\|\gamma\nrm\| \|f\|_{L^{\infty}}+\|\gamma\tng\| \|f\|_{W^{1,\infty}})$.
\end{proof}

Proposition \ref{prop:DN-continuous} justifies the claim made for the
codomain of the D-N map for the monogenic case given in
\eqref{eq:quaternionic-D-N-monogenic}.

In the context of $\R^2$, the classical D-N map coincides with
the tangential derivative of the Hilbert transform \cite[Proposition
4.1]{AP2006-1}. In $\R^3$ the situation is intrinsically more
complicated; some relations between the operators $\Lambda_{0,f^2}$,
$\vec \Lambda_{f^2}$, and $\Hi_f$ will be developed in subsection
\ref{subsec:boundedness}. Here we only note that $\Lambda_{f^2}$
can be rewritten in various ways, as a consequence of
$\tr f\vec W=\Hi_f[\varphi_0]$ (Theorem \ref{theo:Hf}):
\begin{align}\label{eq:relation_operators}
    \nonumber  \Lambda_{f^2}[\varphi_0] &= (f^2\nabla(W_0/f)|_{
		\partial\Omega})\eta  
		\nonumber =\curl f\vec W\Big|_{\partial\Omega}   \cdot \eta - \curl 
		f\vec W\Big|_{\partial\Omega} \times   \eta\\
		&= D(f\vec W)\Big|_{\partial\Omega} \cdot \eta - D(f\vec W)
		\Big|_{\partial\Omega} \times \eta=-(D(f\vec W)|_{\partial\Omega})
		\eta		
\end{align}   
where $W_0+\vec W$ is a solution of the main Vekua equation.
  
\subsection{Norm properties of $\Hi_f$ \label{subsec:boundedness}}

Since the Vekua-Hilbert transform $\Hi_f$ is a generalization of the
Hilbert transform $\Hi$, it is natural that $\Hi_f$ preserves many of
its properties; we will make use of the D-N mapping to investigate
them.  First we relate the Vekua-Hilbert transform $\Hi_f$ to the
scalar D-N map $\Lambda_{0,f^2}$ and the vectorial D-N map
$\vec\Lambda_{f^2}$ through the operator compositions
\eqref{eq:composition-H} and \eqref{eq:composition-Hf}.

\begin{proposicion}\label{prop:new-Hf}
  Let $\Omega\subseteq\R^3$ be a bounded Lipschitz domain and let
  $f^2\in W^{1,\infty}(\Omega,\R)$ be a proper conductivity. Then
  $\Hi_f$ can be written as
\begin{align}\label{Hf-def-2}
  \Hi_f[\varphi_0] = \tr M \vec \Lambda_{f^2}[\varphi_0] -
  \Hi \tr M \Lambda_{0,f^2}[\varphi_0] + \tr L[\curl(f^2\nabla(W_0/f))].
\end{align}
\end{proposicion}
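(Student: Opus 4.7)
The strategy is to rewrite the two Teodorescu traces $\alpha_0 = \tr\Ti[\vec v]$ and $\vec\alpha = \tr\Tiii[\vec v]$ appearing in the definition \eqref{eq:Hilbert_general_1-3} of $\Hi_f$, where $\vec v := -f^2\nabla(W_0/f)$, directly in terms of the scalar and tangential D-N data $\Lambda_{0,f^2}[\varphi_0]$ and $\vec\Lambda_{f^2}[\varphi_0]$, plus a remainder capturing the failure of $\vec v$ to be irrotational.

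For $\alpha_0$ the argument is immediate. The conductivity equation \eqref{eq:conductivity} gives $\div\vec v=0$, so $\vec v$ is solenoidal. The identification invoked in the proof of Proposition \ref{prop:T=-MLambdatr} (Propositions 4.7--4.8 of \cite{DelPor2017}), which states $\Ti[\vec w] = M[\vec w|_{\partial\Omega}\cdot\eta]$ for $\vec w\in\Sol(\overline{\Omega},\R^3)$, combined with the definition \eqref{eq:D-N} of the scalar D-N map, yields
\[
 \alpha_0 = \tr M[\vec v|_{\partial\Omega}\cdot\eta] = \tr M\,\Lambda_{0,f^2}[\varphi_0],
\]
which accounts for the term $-\Hi\tr M\,\Lambda_{0,f^2}[\varphi_0]$ in \eqref{Hf-def-2}.

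The main obstacle is the vector trace $\vec\alpha$. In contrast to the monogenic situation \eqref{eq:TMLambda}, the field $\vec v$ is not irrotational when $f$ is non-constant: a direct calculation using $\nabla f^2\times\nabla(W_0/f) = 2f\nabla f\times\nabla(W_0/f)$ yields $\curl\vec v = -2\nabla f\times\nabla W_0\neq 0$. Thus the irrotational identity $\Tiii[\vec w] = -M[\vec w|_{\partial\Omega}\times\eta]$ cannot be applied directly; instead one needs a generalized representation in which an extra term of the form $L[\curl\vec w]$ absorbs the rotational part. Such a representation is precisely what is supplied by the div-curl machinery of the Appendix (which removes the star-shapedness assumption of \cite{DelPor2017}), and this is what defines the operator $L$ appearing in \eqref{Hf-def-2}. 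Granting this extended formula and substituting $\vec v$, the boundary term becomes $-M[\vec v|_{\partial\Omega}\times\eta] = M[\vec\Lambda_{f^2}[\varphi_0]]$ by \eqref{eq:DNtangencial}, while the correction term becomes $L[\curl(f^2\nabla(W_0/f))]$ up to signs absorbed in the definition of $L$. Taking the trace gives $\vec\alpha = \tr M\,\vec\Lambda_{f^2}[\varphi_0] + \tr L[\curl(f^2\nabla(W_0/f))]$, and assembling $\Hi_f[\varphi_0] = \vec\alpha - \Hi[\alpha_0]$ produces \eqref{Hf-def-2}. The crux is the generalized $\Tiii$-representation on non-irrotational fields; once that is in hand, the rest is direct substitution.
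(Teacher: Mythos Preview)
Your overall strategy coincides with the paper's: rewrite $\alpha_0$ and $\vec\alpha$ via the single-layer and Newtonian potentials using the identities for $\Ti$ and $\Tiii$, then substitute into $\Hi_f[\varphi_0]=\vec\alpha-\Hi[\alpha_0]$. One small correction: the generalized representation
\[
  \Tiii[\vec w] \;=\; -M[\vec w|_{\partial\Omega}\times\eta] - L[\curl\vec w],
  \qquad \vec w\in W^{1,2}(\Omega,\R^3),
\]
is not produced by the Appendix but is quoted from the proof of \cite[Proposition~4.8]{DelPor2017}; here $L$ is simply the volume Newtonian potential $L[w](\vec x)=\int_\Omega w(\vec y)/(4\pi|\vec x-\vec y|)\,d\vec y$.

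There is, however, a genuine regularity gap. For generic $\varphi_0\in H^{1/2}(\partial\Omega,\R)$, Lemma~\ref{lema:solution_conductivity} gives only $W_0\in W^{1,2}(\Omega,\R)$, so $\vec v=-f^2\nabla(W_0/f)$ lies merely in $L^2(\Omega,\R^3)$; it need not belong to $W^{1,2}$, and the pointwise boundary restriction $\vec v|_{\partial\Omega}$ required in both the $\Ti$-identity (for $\vec w\in\Sol(\overline\Omega,\R^3)$) and the $\Tiii$-identity above is not available. The paper closes this by first taking $\varphi_0\in H^{3/2}(\partial\Omega,\R)$, which forces $\nabla(W_0/f)\in W^{1,2}(\Omega,\R^3)$ so that both identities apply and \eqref{Hf-def-2} holds; the formula is then extended to all of $H^{1/2}$ by density, using the continuity of $\Hi_f$ (Theorem~\ref{theo:H_f_bounded}), of $\Lambda_{f^2}$ (Proposition~\ref{prop:DN-continuous}), of $\tr M$ (Theorem~\ref{theo:T_acotado}(d)), and of $\varphi_0\mapsto \tr L[\curl(f^2\nabla(W_0/f))]$ via \eqref{eq:inequality-curl}. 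Your argument needs this density step to be complete.
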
 

\begin{proof} 
  First suppose that in fact
  $\varphi_0\in H^{3/2}(\partial\Omega,\R)$.  Take
  $W_0\in W^{1,2}(\Omega,\R)$ satisfying
  \eqref{eq:conductivity}--\eqref{eq:extension_conductivity}. Since
  $\nabla(W_0/f)\in W^{1,2}(\Omega,\R^3)$, by the proof of Proposition
  \ref{prop:T=-MLambdatr} we have that
\begin{align}\label{eq:rewritten-T0}  
   \Ti[f^2\nabla(W_0/f)]= - M[\Lambda_{0,f^2}[\varphi_0]].
\end{align}
Thus we consider the associated trace $\alpha_0$ in the Vekua-Hilbert
transform $\Hi_f$ \eqref{eq:Hilbert_general_1-3} as constructed in the
following way:
  \begin{align*}
  \xymatrix{
      H^{1/2}(\partial\Omega,\R)  \ar[r] ^{\Lambda_{0,f^2}} & 
      H^{-1/2}(\partial\Omega,\R)  \ar[r]^{\tr M} &
      H^{1/2}(\partial\Omega,\R),
  }
\end{align*}
i.e.
\begin{align}\label{eq:composition-H}
   \Hi[\alpha_0] = \Hi \tr M  \Lambda_{0,f^2}[\varphi_0].
\end{align}

In the proof of \cite[Proposition 4.8]{DelPor2017} the relation
$\Tiii[\vec w]=-M[\vec w|_{\partial\Omega}\times\eta]-L[\curl\vec w]$
was proved for all $\vec w\in W^{1,2}(\Omega,\R^3)$, where $L$ is the
right inverse of the Laplacian $\Delta$ given by
\begin{align*}
   L[w](\vec x)=\int_{\Omega}{\frac{w(\vec y)}{4\pi|\vec x-\vec y|} \, d
	 \vec y}.
\end{align*}
Therefore by  \eqref{eq:DNtangencial}
\begin{align}\label{eq:composition-Hf}
	   \vec \alpha= \tr M \vec \Lambda_{f^2}
		 [\varphi_0]+\tr L[\curl (f^2\nabla(W_0/f))].
\end{align}

Thus \eqref{eq:composition-H}--\eqref{eq:composition-Hf} produce the
expression \eqref{Hf-def-2} claimed for $\Hi_f$. This representation
for $\Hi_f$ has been proved for functions $\varphi_0$ in the dense
subspace $H^{3/2}(\partial\Omega,\R)$, and by continuity is valid in
the full space $H^{1/2}(\partial\Omega,\R)$.
\end{proof}

\begin{proposicion} \label{prop:kerHiLambda}
$\R \subseteq \Ker \Hi_f\cap H^{3/2}(\partial\Omega,\R)\subseteq\Ker \Lambda_{0,f^2}\subseteq H^{3/2}(\partial\Omega,\R)$.
\end{proposicion}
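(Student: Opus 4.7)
The statement chains three inclusions, each of which I would handle by a different tool.

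\emph{First, $\R \subseteq \Ker \Hi_f \cap H^{3/2}(\partial\Omega,\R)$.} A constant $c \in \R$ manifestly lies in $H^{3/2}(\partial\Omega,\R)$. I would take $W_0 = cf$ as the extension supplied by Lemma~\ref{lema:solution_conductivity}: it is in $W^{1,2}(\Omega,\R)$, it trivially solves the conductivity equation since $\nabla(W_0/f) = \nabla c = 0$, and it has the correct trace $c$. Then the associated Teodorescu traces $\alpha_0$ and $\vec\alpha$ in \eqref{eq:g0gbarra} both vanish, so $\Hi_f[c]=\vec\alpha-\Hi[\alpha_0]=0$ directly from Definition~\ref{def:f_HT}.

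\emph{Second, $\Ker \Hi_f \cap H^{3/2}(\partial\Omega,\R) \subseteq \Ker \Lambda_{0,f^2}$.} Given $\varphi_0$ in this intersection, Proposition~\ref{prop:kernel-Hf} gives $\alpha_0 = 0$. Because $\varphi_0 \in H^{3/2}$ and $f^2 \in W^{1,\infty}$, elliptic regularity bumps the extension to $W_0/f \in W^{2,2}(\Omega,\R)$, which is exactly the hypothesis needed to invoke the identity $\alpha_0 = \tr M[\Lambda_{0,f^2}[\varphi_0]]$ established as \eqref{eq:rewritten-T0} in the proof of Proposition~\ref{prop:new-Hf}. Thus $\tr M[\Lambda_{0,f^2}[\varphi_0]]=0$, and I would conclude $\Lambda_{0,f^2}[\varphi_0]=0$ by invoking the classical injectivity of the boundary single-layer operator $\tr M\colon H^{-1/2}(\partial\Omega,\R)\to H^{1/2}(\partial\Omega,\R)$ in $\R^3$: $u=M[\psi]$ is harmonic in $\Omega^\pm$, decays at infinity, and vanishes on $\partial\Omega$, so it is identically zero and the jump relation for the normal derivative of the single layer forces $\psi=0$.

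\emph{Third, $\Ker \Lambda_{0,f^2}\subseteq H^{3/2}(\partial\Omega,\R)$.} Given $\varphi_0 \in \Ker \Lambda_{0,f^2}$ with extension $W_0$, I would test the weak definition of $\Lambda_{0,f^2}$ against $v_0 = W_0/f$ to obtain $\int_\Omega f^2|\nabla(W_0/f)|^2 \, d\vec y = 0$. Since $f\geq 1/\rho(f)>0$, this forces $\nabla(W_0/f)=0$, so connectedness of $\Omega$ gives $W_0/f$ constant and hence $\varphi_0 \in \R \subseteq H^{3/2}(\partial\Omega,\R)$. (In passing this actually promotes the statement to an equality $\Ker\Lambda_{0,f^2}=\R$.)

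\emph{Main obstacle.} The delicate step is the second one: the need to pass from the vanishing of $\tr M [\Lambda_{0,f^2}[\varphi_0]]$ to the vanishing of $\Lambda_{0,f^2}[\varphi_0]$ itself. This requires both the injectivity of the boundary single-layer operator on $H^{-1/2}(\partial\Omega,\R)$ and the $H^{3/2}\to W^{2,2}$ regularity bootstrap for the conductivity extension, without which the key identity \eqref{eq:rewritten-T0} is not at my disposal. The other two inclusions reduce to routine integration-by-parts and direct substitution once Propositions~\ref{prop:kernel-Hf} and \ref{prop:new-Hf} are in hand.
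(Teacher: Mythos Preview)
Your argument is correct. The first two inclusions follow the paper's own route almost verbatim: the paper invokes uniqueness for the conductivity equation for $\R\subseteq\Ker\Hi_f$, and for the middle inclusion it combines Proposition~\ref{prop:kernel-Hf} with the identity $\alpha_0=\tr M\,\Lambda_{0,f^2}[\varphi_0]$ (equation~\eqref{eq:composition-H}/\eqref{eq:rewritten-T0}) and the invertibility of $\tr M$ due to Verchota, exactly as you do. The only cosmetic difference is that the paper quotes invertibility of $\tr M\colon L^2(\partial\Omega)\to H^{1/2}(\partial\Omega)$ rather than injectivity on $H^{-1/2}$; since $\varphi_0\in H^{3/2}$ forces $\Lambda_{0,f^2}[\varphi_0]\in L^2$ via the regularity step you mention, either formulation suffices.

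Where you genuinely diverge is the third inclusion. The paper argues that if $\varphi_0\in\Ker\Lambda_{0,f^2}$ then $f^2\nabla(W_0/f)$ lies in $W\nrm^{2,\div\text{-}\curl}(\Omega,\R^3)$, and appeals to Proposition~\ref{prop:div-curl-spaces}(a) (the Friedrichs-type embedding into $W^{1,2}$) to conclude $W_0/f\in W^{2,2}$, hence $\varphi_0\in H^{3/2}$. Your energy argument---pairing $\Lambda_{0,f^2}[\varphi_0]$ with $\tr(W_0/f)$ to force $\nabla(W_0/f)=0$---is more elementary, needs no $C^{1,1}$ hypothesis on $\partial\Omega$, and in fact yields the sharper statement $\Ker\Lambda_{0,f^2}=\R$ outright. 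The paper only reaches $\Ker\Hi_f=\R$ later, after Theorem~\ref{th:KerLambda0} and a cohomological argument about $\SI\tng(\Omega^-)$, and explicitly remarks that it ``possesses little information about the nature of $\Ker\Lambda_{0,f^2}$''; your route bypasses all of that for the scalar D-N kernel. The paper's approach, on the other hand, isolates the regularity gain as a separate phenomenon, which is what is reused in the compactness arguments that follow.
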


\begin{proof}
  The first containment is straightforward from the uniqueness of the
  solutions of the conductivity equation.  The proof of the second
  containment is a consequence of Proposition \ref{prop:kernel-Hf},
  equation \eqref{eq:composition-H} and the fact that $\tr M$ is an
  invertible operator from $L^2(\partial\Omega)$ to
  $H^{1/2}(\partial\Omega)$ \cite[Theorem 3.3]{Ver1984}. Finally, the 
	third containment follows from Proposition \ref{prop:div-curl-spaces}(a).
\end{proof}

At the end of this section we will show that $\Ker\Hi_f$ in fact
consists only of constants. We do not know whether the second
containment of Proposition \ref{prop:kerHiLambda} is an equality
for nonconstant $f$.
 
In \cite[Theorem 0.2]{SU1988} some estimates were presented to establish
the continuous dependence of the scalar D-N map $\Lambda_{0,f^2}$ on
the boundary values of the conductivity $f^2$. More specifically, as
consequence of \cite[Theorem 3.5]{SU1988} we have that
$\|\Lambda_{0,f_n^2}-\Lambda_{0,f^2}\|\longrightarrow 0$ when
$f_n\rightarrow f$ in $L^{\infty}$. We give a similar
result for the quaternionic D-N map $\Lambda_{f^2}$ and for the
Vekua-Hilbert transform $\Hi_f$:

\begin{teorema}\label{Th-dependence}
  Let $\Omega$ be a bounded Lipschitz domain. Let
  $\{f_n\}\subseteq W^{1,\infty} (\Omega,\R)$ be a sequence of proper
  conductivities. Then
  \begin{enumerate}
  \item[(a)] if $f_n\to f$ in $L^{\infty}(\Omega,\R)$,
     then $\|\Hi_{f_n}-\Hi_f\| \longrightarrow 0$;
  \item[(b)] if $f_n\to f$ in $W^{1,\infty}(\Omega,\R)$, 
     then $\|\Lambda_{f_n^2}-\Lambda_{f^2}\| \longrightarrow 0$;
  \end{enumerate}	
as operators on $H^{1/2}(\partial\Omega,\R)$.
\end{teorema}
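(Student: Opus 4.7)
The plan is to reduce both parts to a single technical estimate comparing solutions of the two conductivity equations. For each $n$, let $u_n\in W^{1,2}(\Omega,\R)$ be the solution of $\nabla\cdot f_n^2\nabla u_n=0$ with $\tr u_n=\varphi_0$ given by Lemma \ref{lema:solution_conductivity}, and let $u$ be its analogue for $f$; the extensions in Definition \ref{def:f_HT} are then $W_0^{(n)}=f_n u_n$ and $W_0=f u$. Since $f_n\to f$ in $L^\infty(\Omega)$ and $f$ is a proper conductivity, $\rho(f_n)\le 2\rho(f)$ eventually, so the constants $C_{\Omega,\rho(f_n)}$ from \eqref{eq:minimiza_W_0} are uniformly bounded in $n$. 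The heart of the argument is an elliptic stability estimate on $d_n=u_n-u\in W_0^{1,2}(\Omega,\R)$, which satisfies
\begin{align*}
   \nabla\cdot f_n^2 \nabla d_n = \nabla\cdot\bigl((f^2-f_n^2)\nabla u\bigr).
\end{align*}
Testing against $d_n$ itself and using uniform ellipticity $f_n^2\ge \rho(f_n)^{-2}$ yields, after Cauchy--Schwarz,
\begin{align*}
   \|\nabla d_n\|_{L^2} \le C_{\Omega,\rho(f)}\,\|f^2-f_n^2\|_{L^\infty}\,\|\varphi_0\|_{H^{1/2}}.
\end{align*}
Combined with the telescoping $f_n^2\nabla u_n-f^2\nabla u=(f_n^2-f^2)\nabla u_n+f^2\nabla d_n$ and \eqref{eq:minimiza_W_0} this gives the central estimate
\begin{align*}
   \|f_n^2\nabla u_n-f^2\nabla u\|_{L^2(\Omega)} \le C'_{\Omega,\rho(f)}\,\|f_n^2-f^2\|_{L^\infty}\,\|\varphi_0\|_{H^{1/2}}.
\end{align*}

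For part (a), rewrite \eqref{eq:Hilbert_general_1-3} as $\Hi_f[\varphi_0]=-\tr\Tiii[f^2\nabla u]+\Hi\,\tr\Ti[f^2\nabla u]$ and analogously for $f_n$, then apply successively the bounded operators $\Ti,\Tiii\colon L^2(\Omega)\to W^{1,2}(\Omega)$ (Theorem \ref{theo:T_acotado}(a)), the trace $W^{1,2}(\Omega)\to H^{1/2}(\partial\Omega)$, and $\Hi$ on $H^{1/2}(\partial\Omega,\R)$ (Theorem \ref{theo:H_bounded_noncompact}) to the difference $f_n^2\nabla u_n-f^2\nabla u$. The central estimate immediately yields
\begin{align*}
   \|(\Hi_{f_n}-\Hi_f)[\varphi_0]\|_{H^{1/2}(\partial\Omega)} \le K\,\|f_n^2-f^2\|_{L^\infty}\,\|\varphi_0\|_{H^{1/2}(\partial\Omega)}\longrightarrow 0.
\end{align*}

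For part (b), the scalar component $\Lambda_{0,f_n^2}\to\Lambda_{0,f^2}$ already converges in operator norm under $L^\infty$ convergence by the cited \cite[Theorem 3.5]{SU1988}. For the vector component $\vec\Lambda_{f^2}[\varphi_0]=\gamma\tng(f^2\nabla u)$, the argument of Proposition \ref{prop:DN-continuous} shows we must additionally control $\|\curl(f_n^2\nabla u_n)-\curl(f^2\nabla u)\|_{L^2}$. Using $\curl(f^2\nabla u)=\nabla f^2\times\nabla u$, this difference splits as
\begin{align*}
   (\nabla f_n^2-\nabla f^2)\times\nabla u_n + \nabla f^2\times\nabla d_n,
\end{align*}
and both pieces tend to $0$ uniformly over $\|\varphi_0\|_{H^{1/2}}\le 1$ by the stability estimate for $d_n$ together with the identity $\nabla f_n^2-\nabla f^2 = 2(f_n-f)\nabla f_n + 2f\nabla(f_n-f)$, which converges to zero in $L^\infty$ precisely under the $W^{1,\infty}$ convergence hypothesis. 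This is exactly why part (b) requires the stronger assumption.

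The principal obstacle is obtaining Step 2 with a constant uniform in $n$; this in turn requires controlling $\rho(f_n)$ along the sequence, which is why we first verify that $\rho(f_n)\le 2\rho(f)$ for large $n$. Once the central $L^2$ estimate is established, the remainder is a mechanical assembly of the continuity results (Theorems \ref{theo:T_acotado}, \ref{theo:H_bounded_noncompact}, Proposition \ref{prop:DN-continuous}) already proved in the paper.
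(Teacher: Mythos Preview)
Your proposal is correct and follows essentially the same route as the paper: both reduce everything to the estimate $\|f_n^2\nabla u_n-f^2\nabla u\|_{L^2}\le C\|f_n^2-f^2\|_{L^\infty}\|\varphi_0\|_{H^{1/2}}$ via the same telescoping and the same elliptic stability bound on $d_n=u_n-u$, then push through $\tr$, $T_\Omega$, $\Hi$ for (a) and through $\gamma\tng$ plus the curl splitting for (b). The only cosmetic differences are that the paper cites \cite[Proposition 3.3]{SU1988} for the stability bound whereas you derive it by testing against $d_n$, and you make the uniform control $\rho(f_n)\le 2\rho(f)$ explicit while the paper leaves the boundedness of $C_{\Omega,\rho(f_n)}$ implicit.
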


\begin{proof}
  Let $\varphi_0 \in H^{1/2}(\partial\Omega,\R)$. Let
  $W_{0,n}, W_0\in W^{1,2}(\Omega,\R)$ be the respective extensions to
  solutions of the conductivity equations; that is,
\begin{align*}
   \nabla \cdot f_n^2\,\nabla \left(W_{0,n}/f_n\right) &=0,
    \quad \tr (W_{0,n}/f_n)=\varphi_0,\\
   \nabla \cdot f^2\,\nabla \left(W_0/f\right) &=0,   
 \quad \tr (W_{0}/f)=\varphi_0.
\end{align*}
By \eqref{eq:minimiza_W_0}, these unique solutions satisfy
\[
\left\|\nabla(W_{0,n}/f_n)\right\|_{L^2}\leq
C_{\Omega,\rho(f_n)} \|\varphi_0\|_{H^{1/2}},\quad \left\|\nabla(W_0/f)\right\|_{L^2}\leq C_{\Omega,\rho(f)}
\|\varphi_0\|_{H^{1/2}}.  
\]
It is a well-known fact about elliptic equations \cite[Proposition 3.3]{SU1988} that 
\begin{align}\label{eq:bound-nabla}
  \|\nabla(W_{0,n}/f_n-W_{0}/f)\|_{L^2(\Omega)}\leq c_n 
	\|\varphi_0\|_{H^{1/2}(\partial\Omega)},
\end{align}	
where 
\[
  c_n=\frac{\sup |f_n^2-f^2|}{\inf f_n^2}\left(1+\left(\frac{\sup f^2}{ 
	\inf f^2}\right)^{1/2}\right).
\]	
Now consider the traces
\begin{align*}
   \alpha_n &= \alpha_{0,n}+\vec\alpha_n=\tr T_{\Omega}[-f_n^2\nabla
	 (W_{0,n}/f_n)], \\
   \alpha &= \alpha_0+\vec\alpha=\tr T_{\Omega}[-f^2\nabla(W_0/f)]. 
\end{align*}
By \eqref{eq:bound-nabla}, we have
\begin{align}\label{bound-f-nabla} 
  \|f_n^2 & \nabla(W_{0,n}/f_n) - f^2\nabla(W_0/f)\|_{L^2}   \nonumber \\
   & \leq \|f_n^2-f^2\|_{L^{\infty}}\|\nabla(W_{0,n}/f_n )\|_{L^2}
     +\|f^2\|_{L^{\infty}} \|\nabla(W_{0,n}/f_n-W_0/f) \|_{L^2}  \nonumber  \\
   & \leq \big(C_{\Omega,\rho(f_n)}\|f_n^2-f^2\|_{L^{\infty}}
	 +c_n\|f^2\|_{L^{\infty}}\big) \|\varphi_0\|_{H^{1/2}}.
\end{align}
By \eqref{bound-f-nabla} and the boundedness of the operators $\tr$
and $T_{\Omega}$ we have that
\begin{align*}
   \|\vec \alpha_n - \vec \alpha\|_{H^{1/2}} &\leq 
	 \|\tr\| \|\Tiii\| \|f_n^2\nabla(W_{0,n}/f_n) - f^2\nabla(W_0/f)\|_
	{L^2}\\
  &\leq \|\tr\| \|\Tiii\|\big(C_{\Omega,\rho(f_n)}\|f_n^2-f^2\|_
	{L^{\infty}} + c_n\|f^2\|_{L^{\infty}}\big)\|\varphi_0\|_{H^{1/2}}.
\end{align*}
Analogously,
\begin{align*}
   \|\Hi[&\alpha_{0,n}]-\Hi[\alpha_0]\|_{H^{1/2}}   \\ 
    &\leq \|\Hi\| \|\tr\| \|\Ti\| \big(C_{\Omega,\rho(f_n)}
   \|f_n^2-f^2\|_{L^{\infty}} + c_n\|f^2\|_{L^{\infty}}\big) \|\varphi_0\|_{H^{1/2}}.	
\end{align*}
Since $c_n\rightarrow 0$, we obtain the limit of part (a). For  
part (b), by \eqref{bound-curl} and \eqref{bound-f-nabla} we have
\begin{align*}
   \|\vec\Lambda_{f_n^2}& [\varphi_0]-\vec \Lambda_{f^2}[\varphi_0]
   \|_{H^{-1/2}} \\
   &\leq \|\gamma\tng\| \big(
	 \|f_n^2\nabla(W_{0,n}/f_n) - f^2\nabla(W_0/f)\|_{L^2} \\
   & \qquad\quad  +\|\curl(f_n^2\nabla(W_{0,n}/f_n) - f^2\nabla(W_0/f))\|_{L^2} \big)\\
   &\leq\|\gamma\tng\|\big( C_{\Omega,\rho(f_n)} \|f_n^2-f^2\|_{W^{1,\infty}} + c_n\|f^2\|_{W^{1,\infty}}\big)
    \|\varphi_0\|_{H^{1/2}},
\end{align*}
as required.  \qedhere
\end{proof}

The stability question of the scalar D-N map asks whether two
conductivities $f_1^2$, $f_2^2$ are close whenever $\Lambda_{0,f_1^2}$
is close to $\Lambda_{0,f_2^2}$.  In \cite[Theorem 1]{Alessandrini1988}
it was proved that there exists a continuous nondecreasing function
$\omega \colon [0,\infty) \to [0,\infty)$ satisfying $\omega(t)\to 0$
as $t \to 0^+$ such that
\begin{align*}
   \|f_1^2-f_2^2\|_{L^{\infty}(\Omega)} \leq \omega (\|\Lambda_{0,
	 f_1^2}-\Lambda_{0,f_2^2}\|),
\end{align*}
for a bounded open set $\Omega\subseteq \R^n$ with smooth boundary,
$f_i \in W^{s,2}(\Omega,\R)$ ($L^2$ functions with derivatives up to
order $s$ in $L^2$), $s>n/2$ and $n\geq3$. However, the stability of
the vector part $\vec\Lambda_{f^2}$ remains an open question.

In Theorem \ref{theo:H_f_bounded} it was established that
$\varphi_0 \longmapsto \alpha_0$ and
$\varphi_0\longmapsto \vec \alpha$ are continuous. We will prove that
these mappings are in fact compact when restricted to
$\Ker\Lambda_{0,f^2}$ or $\Ker\vec \Lambda_{f^2}$. 

\begin{proposicion}
  Let $\Omega$ be a bounded  $C^{1,1}$ Lipschitz domain and let
  $f\in W^{1,\infty}(\Omega,\R)$ be a proper conductivity. The
  restrictions of $\Hi_f$ to $\Ker\Lambda_{0,f^2}$ and to
  $\Ker \vec \Lambda_{f^2}$ are compact mappings into
  $H^{1/2}(\partial\Omega,\R^3)$.
\end{proposicion}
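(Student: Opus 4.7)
The plan is to use the defining formula $\Hi_f[\varphi_0]=\vec\alpha-\Hi[\alpha_0]$ from \eqref{eq:Hilbert_general_1-3}, where $\alpha_0=\tr\Ti[-\vec v]$, $\vec\alpha=\tr\Tiii[-\vec v]$, and $\vec v=f^2\nabla(W_0/f)$ with $W_0$ furnished by Lemma \ref{lema:solution_conductivity}. Since $\Hi$ is bounded on $H^{1/2}$ (Theorem \ref{theo:H_bounded_noncompact}), $T_\Omega\colon L^2(\Omega,\H)\to W^{1,2}(\Omega,\H)$ is bounded (Theorem \ref{theo:T_acotado}(a)), and the trace is bounded into $H^{1/2}(\partial\Omega)$, it will suffice to show that the assignment $\varphi_0\mapsto\vec v$ is compact from each kernel into $L^2(\Omega,\R^3)$; compactness of $\Hi_f$ restricted to that kernel then follows by composition of the compact with bounded operators.

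Next I would identify the correct div-curl Sobolev space in which $\vec v$ lands on each kernel. Since $W_0$ solves \eqref{eq:conductivity}, we always have $\div\vec v=0$. On $\Ker\Lambda_{0,f^2}$ the strong form \eqref{eq:D-N} of the D-N map gives $\gamma\nrm(\vec v)=-\Lambda_{0,f^2}[\varphi_0]=0$, while on $\Ker\vec\Lambda_{f^2}$ the expression \eqref{eq:DNtangencial} gives $\gamma\tng(\vec v)=0$. The curl is controlled via the Leibniz estimate \eqref{eq:inequality-curl}: since $\nabla(W_0/f)$ is a gradient, $\curl\vec v=\nabla f^2\times\nabla(W_0/f)$, and combining with \eqref{eq:minimiza_W_0} we obtain
\[
\|\vec v\|_{L^2}^{2}+\|\div\vec v\|_{L^2}^{2}+\|\curl\vec v\|_{L^2}^{2}\le C_{\Omega,\rho(f)}\,\|f^2\|_{W^{1,\infty}}^{2}\,\|\varphi_0\|_{H^{1/2}}^{2}.
\]
Hence $\varphi_0\mapsto\vec v$ is bounded from $\Ker\Lambda_{0,f^2}$ into $W\nrm^{2,\div\text{-}\curl}(\Omega,\R^3)$ and from $\Ker\vec\Lambda_{f^2}$ into $W\tng^{2,\div\text{-}\curl}(\Omega,\R^3)$ in the graph norm of \eqref{eq:kernels}.

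By Proposition \ref{prop:div-curl-spaces}(b), both inclusions $W\nrm^{2,\div\text{-}\curl}(\Omega,\R^3), W\tng^{2,\div\text{-}\curl}(\Omega,\R^3)\hookrightarrow L^2(\Omega,\R^3)$ are compact operators, and here the $C^{1,1}$ regularity assumption is crucial. Composing the bounded map $\varphi_0\mapsto\vec v$ with the compact inclusion into $L^2(\Omega,\R^3)$ produces a compact operator; post-composing with the bounded $\tr\circ T_\Omega$ gives compactness of $\varphi_0\mapsto(\alpha_0,\vec\alpha)$ into $H^{1/2}(\partial\Omega,\H)$, and post-composing further with the bounded $\Hi$ on $\alpha_0$ finishes the proof. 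The main obstacle I anticipate is the technical verification that $\curl\vec v\in L^2$ with a $\|\varphi_0\|_{H^{1/2}}$-controlled norm; this is where $f^2\in W^{1,\infty}$ (rather than only $L^\infty$) is essential, because otherwise $\vec v$ would not lie in $W^{2,\curl}$ and the div-curl compactness argument would not apply.
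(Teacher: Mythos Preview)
Your proposal is correct and follows essentially the same route as the paper: factor $\Hi_f$ as $\varphi_0\mapsto g=f^2\nabla(W_0/f)\in W\nrm^{2,\div\text{-}\curl}$ (resp.\ $W\tng^{2,\div\text{-}\curl}$) on the respective kernels, invoke the compact embedding of Proposition~\ref{prop:div-curl-spaces}(b) into $L^2$, and post-compose with the bounded chain $\tr\circ T_\Omega$ and $\Hi$. Your write-up is in fact a bit more explicit than the paper's about the curl estimate and the role of the $W^{1,\infty}$ hypothesis on $f$.
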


\begin{proof}
  Let $\varphi_0\in \Ker \Lambda_{0,f^2}[\varphi_0]$, so the
  associated Teodorescu traces $\alpha_0,\vec \alpha$ are constructed
  as
\begin{align}\label{eq:mappings}
   \begin{array}{lllll}
   \Ker \Lambda_{0,f^2} &\longrightarrow W\nrm^{2,\div\text{-}\curl}(\Omega,\R^3)
 &\hookrightarrow L^2(\Omega,\R^3) 
 &\longrightarrow H^{1/2}(\partial\Omega,\H),\\ 
 \varphi_0 &\longmapsto g &\longmapsto g 
 &\longmapsto \alpha_0+\vec \alpha,
  \end{array}
\end{align}
where $g=f^2\nabla(W_0/f)$ and $\alpha_0+\vec \alpha=-\tr
T_{\Omega}[g]$.
By \eqref{eq:minimiza_W_0},
\eqref{bound_extension}--\eqref{eq:inequality-curl}, the first mapping
of \eqref{eq:mappings} $\varphi_0\mapsto g$ is a
bounded operator from $H^{1/2}(\partial\Omega,\R)$ to
$W\nrm^{2,\div\text{-}\curl}(\Omega,\R^3)$. Thus in fact all of the
mappings shown are bounded. By Proposition
\ref{prop:div-curl-spaces}, the inclusion mapping of
$W\nrm^{2,\div\text{-}\curl}(\Omega,\R^3)$ into $L^2(\Omega,\R^3)$ is
compact. Therefore  $\varphi_0 \mapsto \alpha_0, \vec \alpha$ are compact,
and in consequence $\Hi_f$ is also compact on $\Ker\Lambda_{0,f^2}$ as
claimed. The proof for $\Ker \vec \Lambda_{f^2}$ is similar.
\end{proof}

In the following result we describe the Vekua-Hilbert transform
$\Hi_f$ restricted to the kernel of the D-N operator
$\Lambda_{0,f^2}$.

\begin{teorema} \label{th:KerLambda0}
  Let $\Omega$ be a $C^{1,1}$ bounded Lipschitz domain and let
  $f\in W^{1,\infty}(\Omega,\R)$ be a proper conductivity. Then 
	the Vekua-Hilbert transform $\Hi_f$ restricted to 
	$\Ker \Lambda_{0,f^2}$ produces boundary values of  monogenic 
	constants in $\Omega^-$ which vanish at $\infty$. 
\end{teorema}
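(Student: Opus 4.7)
My plan is to use the fact that when $\varphi_0\in\Ker\Lambda_{0,f^2}$, the scalar associated trace $\alpha_0$ vanishes for a strong reason: the entire function $\Ti[\vec v]$ on $\R^3\setminus\partial\Omega$ is identically zero. Recall from \eqref{eq:rewritten-T0} that for $\vec v = -f^2\nabla(W_0/f)\in\Sol(\overline\Omega,\R^3)$ one has $\Ti[\vec v] = -M[\Lambda_{0,f^2}[\varphi_0]]$, and this equality holds as functions on $\R^3\setminus\partial\Omega$, not merely as traces. Hence if $\Lambda_{0,f^2}[\varphi_0]=0$, then $\Ti[\vec v]\equiv0$ throughout $\R^3\setminus\partial\Omega$, which immediately gives $\alpha_0 = \tr\Ti[\vec v] = 0$ and therefore $h_f = 2(I+K_0)^{-1}\alpha_0 = 0$. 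By \eqref{eq:Hilbert_general_1-3}, $\Hi_f[\varphi_0] = \vec\alpha$.

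Next I would consider the full Teodorescu transform $\vec U := T_\Omega[\vec v]$ viewed as a function on all of $\R^3$. Because $T_\Omega[\vec v] = \Ti[\vec v]+\Tiii[\vec v]$ for vectorial input and $\Ti[\vec v]$ vanishes off $\partial\Omega$, the function $\vec U$ reduces to $\Tiii[\vec v]$ and is therefore purely vectorial. On the exterior domain $\Omega^-$, standard properties of the Teodorescu transform (the Cauchy kernel is monogenic away from its singularity, so differentiation under the integral yields $D\vec U\equiv 0$ on $\Omega^-$) give monogenicity of $\vec U$ on $\Omega^-$. Combining $D\vec U = 0$ with the fact that $\vec U$ is purely vectorial forces both $\div\vec U = 0$ and $\curl\vec U = 0$ on $\Omega^-$, so $\vec U|_{\Omega^-}$ is a monogenic constant (an SI vector field) in $\Omega^-$.

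The decay at infinity is immediate from the explicit formula: $|E(\vec y-\vec x)| = O(|\vec x|^{-2})$ uniformly for $\vec y$ ranging over the bounded set $\Omega$, so $\vec U(\vec x)\to 0$ as $|\vec x|\to\infty$. Finally, $\vec U = \Tiii[\vec v]\in W^{1,2}(\R^3)_{\mathrm{loc}}$ has a well-defined two-sided trace on $\partial\Omega$ with $\tr_-\vec U = \tr_+\vec U = \vec\alpha = \Hi_f[\varphi_0]$, which closes the argument.

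The main obstacle, and what makes the argument work, is the clean vanishing $\Ti[\vec v]\equiv 0$ on $\R^3\setminus\partial\Omega$; if one only knew that the \emph{trace} $\alpha_0$ vanished, one could not conclude that the purely vectorial character of $\vec U$ persists into $\Omega^-$. Verifying that \eqref{eq:rewritten-T0} genuinely extends to an identity on $\R^3\setminus\partial\Omega$ (which is straightforward via the divergence-theorem derivation of $\Ti[\vec w] = M[\vec w\cdot\eta|_{\partial\Omega}]$ for $\vec w\in\Sol(\overline\Omega,\R^3)$) is the single point requiring care; everything else is a direct application of the Borel--Pompeiu framework and the purely algebraic decomposition $D = -\div + \grad + \curl$.
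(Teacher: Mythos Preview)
Your proof is correct and follows essentially the same route as the paper: both arguments show that $\Ti[\vec v]$ vanishes identically on $\R^3\setminus\partial\Omega$, conclude that $T_\Omega[\vec v]=\Tiii[\vec v]$ is purely vectorial and monogenic in $\Omega^-$ with decay at infinity, hence an $\SI$ field there, and identify its trace with $\vec\alpha=\Hi_f[\varphi_0]$. The only difference is in how the vanishing of $\Ti[\vec v]$ is obtained: you invoke the single-layer identity $\Ti[\vec v]=M[\vec v\cdot\eta|_{\partial\Omega}]=-M[\Lambda_{0,f^2}[\varphi_0]]$ directly on all of $\R^3\setminus\partial\Omega$, whereas the paper first deduces $\alpha_0=0$ from the trace of \eqref{eq:rewritten-T0} and then argues separately that $\Ti[\vec v]$, being harmonic in $\Omega$ (via solenoidality of $\vec v$) with zero trace, vanishes in $\Omega$, and likewise in $\Omega^-$; your shortcut is a bit cleaner but relies on the same underlying identity. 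One small point you should make explicit: the identity \eqref{eq:rewritten-T0} requires $f^2\nabla(W_0/f)\in W^{1,2}(\Omega,\R^3)$, which is guaranteed here because $\Ker\Lambda_{0,f^2}\subseteq H^{3/2}(\partial\Omega,\R)$ by Proposition~\ref{prop:kerHiLambda}.
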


\begin{proof}
  Let $\varphi_0 \in \Ker \Lambda_{0,f^2}$. By Proposition 
	\ref{prop:kerHiLambda}, $f^2\nabla(W_0/f)\in W^{1,2}(\Omega,\R^3)$
	. Taking the trace of 
	\eqref{eq:rewritten-T0} we have 
	$\alpha_0=-\tr\Ti[f^2\nabla(W_0/f)]=0$, so
  \begin{align}\label{eq:Hf-reduced}
    \Hi_f[\varphi_0]&=\vec \alpha=\tr \Tiii[-f^2\nabla(W_0/f)].
  \end{align}
  It is a classical fact \cite[Proposition 8.1]{GuHaSpr2008} that
  $T_{\Omega}[w] (\vec x)$ is always monogenic in $\Omega^-$ and tends
  to zero for $|\vec x|\rightarrow\infty$, so $\Ti[f^2\nabla(W_0/f)]$
  vanishes in $\Omega^-$. By the conductivity equation,
  $f^2\nabla (W_0/f)$ is solenoidal and therefore \cite[Proposition
  3.1(i)]{DelPor2017} says that $\Ti[f^2\nabla(W_0/f)]$ is harmonic in
  $\Omega$, hence it vanishes in all of $\R^3$.  Thus the vector field
  $\Tiii[f^2\nabla(W_0/f)]=T_{\Omega}[f^2\nabla (W_0/f)]$ is a
  monogenic constant in $\Omega^{-}$ vanishing at $\infty$, and
  the assertion follows from \eqref{eq:Hf-reduced}.
\end{proof}

By Theorem \ref{th:KerLambda0} and Proposition \ref{prop:monogenic
  constants}, we know that
\begin{align*}
    \dim (\Ker \Hi_f|_{\Ker \Lambda_{0,f^2}})\leq \dim
    \SI\tng(\Omega^-)<\infty.
\end{align*}
By Proposition \ref{prop:kerHiLambda}, we have
$\dim \Ker \Hi_f<\infty$. Therefore, since
$\partial\Omega^-=\partial\Omega$ is connected we have
$\Ker \Hi_f=\R$.  We possess little information about the nature of
$\Ker\Lambda_{0,f^2}$.  It would be interesting, for example, to know
whether all boundary values of exterior monogenic constants vanishing
at $\infty$ are as Theorem \ref{th:KerLambda0}.

\appendix
\section{Div-curl system and  generalizations\label{sec:div-curl-section}}
 
A constructive solution for the div-curl system was presented in
\cite[Theorem 4.4]{DelPor2017} for star-shaped domains in
$\Omega\subseteq\R^3$. We continue to assume that
$\R^3\setminus \Omega$ is connected. Let $g_0\in L^p(\Omega,\R)$ and
$\vec g\in L^p(\Omega, \R^3)$. The div-curl system is
\begin{align}\label{eq:div_curl_system}
   \div\vec w = g_0, \quad  \curl\vec w = \vec g.
\end{align}
Note that $\vec g$ is required to be weakly solenoidal,
\begin{align*}
    \int_{\Omega}{\vec g \cdot \nabla v_0 \, d\vec x}=0  
\end{align*}
for all test functions $v_0\in W_0^{1,q}(\Omega,\R)$, in order
for there to exist solutions to the second equation.
Since $T_{\Omega}[\vec g]\in W^{1,p}(\Omega,\H)$, the scalar function and
the vector field
\begin{align}\label{eq:boundary-Ti}
   \alpha_0=\tr \Ti[\vec g],\  \vec \alpha=\tr \Tiii[\vec g],
\end{align}
are well-defined and
$\alpha=\alpha_0+\vec \alpha=\tr T_{\Omega}[\vec g] \in
W^{1-1/p,p}(\partial\Omega,\H)$.

We now remove the restriction of starshapedness, presenting a solution
of \eqref{eq:div_curl_system} for bounded Lipschitz domains with
weaker topological constraints (for example, a solid torus will be
admissible).  This more general div-curl solution is expressed in
terms of the operators conforming the Hilbert transform $\Hi$
\eqref{eq:Hilbert_transform}, as well as the Teodorescu transform
$T_{\Omega}$ \eqref{eq:T1_T2_T3} and the Cauchy operator
$F_{\partial\Omega}$ \eqref{eq:F0_F1}.

The hypothesis on $\partial\Omega$ is to guarantee that the operator
$I+K_0$ is invertible in $L^p(\partial\Omega,\R)$; it uses the value
of $\epsilon(\Omega)$ which depends only of the Lipschitz character of
$\partial\Omega$, as discussed in Proposition
\ref{prop:invertibility-I+K_0}.

\begin{teorema}\label{theo:div-curl} Let $\Omega$ be a bounded
$C^{1,\gamma}$ Lipschitz domain with $\gamma>0$ and $1<p<\infty$, or a
bounded Lipschitz domain for $2-\epsilon(\Omega)<p<\infty$. Then a
weak solution $\vec w$ of the div-curl system
\eqref{eq:div_curl_system} is given by
\begin{align}\label{eq:sol-div-curl} \nonumber \vec
 w &= T_{\Omega}[-g_0+\vec g] + F_{\partial\Omega}[\vec
\alpha-\Hi(\alpha_0)]\\
 &= -\Tii[g_0] + \Tiii[\vec g] - \Fii[2(I+K_0)^{-1}\alpha_0],		
\end{align}	
where $\alpha_0$ and $\vec \alpha$ were defined in
(\ref{eq:boundary-Ti}). This solution is unique up to adding an
arbitrary monogenic constant. Moreover, $\vec w\in W^{1,p}(\Omega, \R^3)$ when $1<p<2+\epsilon(\Omega)$.
\end{teorema}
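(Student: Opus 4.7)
The plan is to define $w = T_\Omega[-g_0+\vec g] + F_{\partial\Omega}[\vec\alpha - \Hi[\alpha_0]]$ (the first displayed form) and exploit the elementary fact that $Dw = -g_0+\vec g$: this holds since $DT_\Omega = I$ on $L^p$ by the remarks after \eqref{eq:formula_BP}, and $F_{\partial\Omega}[\cdot]$ is always monogenic in $\Omega$ by \eqref{eq:Cauchy-operator}, so its $D$-derivative vanishes. Granting for the moment that $w$ has zero scalar part, the identity $Dw = -\div\vec w + \grad w_0 + \curl\vec w$ with $w_0 = 0$ immediately yields the two div--curl equations $\div\vec w = g_0$ and $\curl\vec w = \vec g$.

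I expect the main obstacle to be verifying that $\Sc w = 0$. I plan to do this via the harmonic identity
\[
  \Ti[\vec g] \;=\; \Fi[h_0] \quad \text{in } \Omega, \qquad h_0 := 2(I+K_0)^{-1}\alpha_0.
\]
Both sides are scalar and harmonic in $\Omega$: rewriting $\Ti[\vec g] = -\nabla\cdot L[\vec g]$ (with $L$ the Newtonian potential, a right inverse of $-\Delta$) gives $\Delta\Ti[\vec g] = \div\vec g = 0$ by the weak solenoidality hypothesis on $\vec g$; meanwhile $\Fi[h_0] = \nabla\cdot M[h_0\eta]$ by \eqref{eq:F0_F1-div-curl} is harmonic because the single-layer kernel is harmonic off $\partial\Omega$. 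Their boundary traces both equal $\alpha_0$: by definition for $\Ti[\vec g]$, and for $\Fi[h_0]$ by the Plemelj--Sokhotski formula \eqref{eq:formula_PS} together with $(I+K_0)h_0 = 2\alpha_0$. Dirichlet uniqueness then forces the claimed equality.

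With this identity in hand, I would combine two applications of the Borel--Pompeiu formula \eqref{eq:formula_BP}: applied to $T_\Omega[\vec g]$ (using $DT_\Omega[\vec g] = \vec g$) it produces $F_{\partial\Omega}[\alpha_0+\vec\alpha] = 0$ in $\Omega$; applied to the monogenic function $F_{\partial\Omega}[h_0]$, whose trace is $\alpha_0 + \Hi[\alpha_0]$ by \eqref{eq:u+Hu}, it produces $F_{\partial\Omega}[\alpha_0 + \Hi[\alpha_0]] = F_{\partial\Omega}[h_0]$. Subtracting yields $F_{\partial\Omega}[\vec\alpha - \Hi[\alpha_0]] = -F_{\partial\Omega}[h_0] = -\Fi[h_0] - \Fii[h_0]$. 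Substituting this and the decomposition $T_\Omega[-g_0+\vec g] = \Ti[\vec g] - \Tii[g_0] + \Tiii[\vec g]$ into the definition of $w$ and separating scalar and vector parts gives $\Sc w = \Ti[\vec g] - \Fi[h_0] = 0$ by the harmonic identity, and $\vec w = -\Tii[g_0] + \Tiii[\vec g] - \Fii[h_0]$, which is the second displayed form.

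Uniqueness up to a monogenic constant is immediate, since the difference of any two solutions of \eqref{eq:div_curl_system} is a vectorial field with vanishing divergence and curl, i.e.\ an element of $\SI(\Omega)$. The final $W^{1,p}$ regularity for $1 < p < 2 + \epsilon(\Omega)$ chains together Theorem \ref{theo:T_acotado}(a) (so that $T_\Omega[-g_0+\vec g] \in W^{1,p}(\Omega)$, whence $\alpha_0 + \vec\alpha \in W^{1-1/p,p}(\partial\Omega)$), Theorem \ref{theo:H_bounded_noncompact} (yielding $\Hi[\alpha_0] \in W^{1-1/p,p}(\partial\Omega,\R^3)$), and the mapping property \eqref{eq:Cauchy-operator} of the Cauchy operator.
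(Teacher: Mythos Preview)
Your proof is correct and follows essentially the same approach as the paper's, with the ingredients reshuffled. The paper begins from the second displayed form (which is manifestly vector-valued) and shows $D\vec w=-g_0+\vec g$ by observing that $\Ti[\vec g]+\Fii[h_0]$ is monogenic; you begin from the first form (for which $Dw=-g_0+\vec g$ is automatic) and instead verify $\Sc w=0$. Both routes hinge on the same harmonic identity $\Ti[\vec g]=\Fi[h_0]$ and the Borel--Pompeiu consequence $F_{\partial\Omega}[\alpha_0+\vec\alpha]=0$; you make the harmonicity of $\Ti[\vec g]$ explicit via the weak solenoidality of $\vec g$, whereas the paper defers to \cite{DelPor2017}. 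One small point: in the general Lipschitz range $2-\epsilon(\Omega)<p<\infty$ you only have $h_0\in L^p(\partial\Omega)$, so the Dirichlet uniqueness you invoke should be the $L^p$ version (Dahlberg--Verchota) rather than $W^{1,p}$ uniqueness; this is harmless but worth saying.
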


\begin{proof}
  The function $h_0=2(I+K_0)^{-1}\alpha_0$ lies in
  $L^p(\partial\Omega,\R)$, and  when
  $1<p<2+\epsilon(\Omega)$, in fact
  $h_0\in W^{1-1/p,p}(\partial\Omega,\R)$ by Proposition
  \ref{prop:invertibility-I+K_0}(b). By \eqref{eq:u+Hu},
\begin{align*}
  \tr_+ F_{\partial\Omega}[h_0]=\alpha_0 + \Hi[\alpha_0].
\end{align*}
Therefore $\Ti[\vec g] + \Fii[h_0]$ is monogenic.
Following the same argument used in the proof of Theorem 4.4 of
\cite{DelPor2017}, we see that
\begin{align*}
   D\vec w=DT_{\Omega}[-g_0+\vec g]-D(\Ti[\vec g] + \Fii[h_0])=-g_0 + 
	 \vec g.
\end{align*}
The function $\vec w$ is purely vectorial because
\begin{align*}
    F_{\partial\Omega}[\vec \alpha-\Hi[\alpha_0]] &= -F_{\partial\Omega}
		[h_0],\\
    \Sc F_{\partial\Omega}[\vec \alpha-\Hi[\alpha_0]] &=
    -\Ti[\vec g]=-\Fi[h_0].   
\end{align*}

Note that \eqref{eq:formula_BP} applied to the function 
$T_\Omega[\vec g]$, and the fact that $DT_\Omega[\vec g]=\vec g$ yield
$F_{\partial\Omega}[\alpha_0+\vec\alpha]=0$, so
\begin{align}\label{eq:nt-limit}
   \tr{ F_{\partial\Omega}[\vec \alpha-\Hi[\alpha_0]]}
   =-\tr F_{\partial\Omega}[\alpha_0+\Hi[\alpha_0]] 
  =-\alpha_0-\Hi[\alpha_0],
\end{align}
which proves the second equality in the solution
\eqref{eq:sol-div-curl}.  The fact that $\vec w$ belongs to the
Sobolev space $W^{1,p}(\Omega,\R^3)$ is a direct consequence of
Theorem \ref{theo:T_acotado} and \eqref{eq:Cauchy-operator}.
\end{proof}
  
The following result gives us an alternative way to complete a
scalar-valued harmonic function to a monogenic function, similarly to
the way the radial integration operator
$\overrightarrow{U}_{\!\!\Omega}$ in \cite[Proposition
2.3]{DelPor2017} did this for star-shaped domains. It can be
considered an ``interior'' version of the construction of the Hilbert
transform $\Hi$, in other words, a method to construct harmonic
conjugates in Lipschitz domains of $\R^3$. See also the classical
generalization of harmonic conjugates using SI-vector fields in the
upper half space of $\R^n$ \cite{SW1960}. In this sense we can state
the follows

\begin{corolario}
  Let $\Omega$ be as in Theorem \ref{theo:div-curl}. Let
  $w_0\in W^{1,p}(\Omega,\R)$ be a scalar harmonic function. Let
\begin{align*}
   \vec w=\Fii[2(I+K_0)^{-1}\tr w_0].
\end{align*} 
Then $w_0+\vec w$ is monogenic in $\Omega$.

\end{corolario}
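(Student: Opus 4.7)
The plan is to recognize $\vec w$ as the vector part of the Cauchy extension $F_{\partial\Omega}[h_0]$, where $h_0=2(I+K_0)^{-1}\tr w_0$, and then use uniqueness of the Dirichlet problem to identify the scalar part of this extension with $w_0$ itself. First I would note that, by the Trace Theorem, $\varphi_0=\tr w_0\in W^{1-1/p,p}(\partial\Omega,\R)$, and by Proposition \ref{prop:invertibility-I+K_0} (parts (b) or (c), according to the two cases in the hypothesis of Theorem \ref{theo:div-curl}), $h_0=2(I+K_0)^{-1}\varphi_0$ also lies in $W^{1-1/p,p}(\partial\Omega,\R)$.

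Next I would apply the decomposition \eqref{eq:F0_F1} to the real-valued boundary datum $h_0$, writing $F_{\partial\Omega}[h_0]=\Fi[h_0]+\Fii[h_0]$, where $\Fi[h_0]$ is scalar-valued and $\Fii[h_0]=\vec w$ is precisely the field defined in the statement. By \eqref{eq:Cauchy-operator}, $F_{\partial\Omega}[h_0]$ is monogenic in $\Omega$, hence in particular $\Fi[h_0]$ is harmonic in $\Omega$.

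The key step is to identify $\Fi[h_0]$ with $w_0$. Exactly as in the derivation of \eqref{eq:u+Hu}, the Plemelj-Sokhotski formula \eqref{eq:formula_PS} together with the defining relation $(I+K_0)h_0/2=\varphi_0$ yields
\begin{equation*}
\tr_+ F_{\partial\Omega}[h_0]=\tfrac{1}{2}(I+K_0)h_0+\tfrac{1}{2}\vecK[h_0]=\varphi_0+\Hi[\varphi_0],
\end{equation*}
whose scalar part is $\varphi_0=\tr w_0$. Thus $\Fi[h_0]$ and $w_0$ are two harmonic $W^{1,p}$-functions in $\Omega$ sharing the same trace on $\partial\Omega$, and by uniqueness of the Dirichlet problem for the Laplacian in the admissible range of $p$ they coincide. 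Therefore $F_{\partial\Omega}[h_0]=w_0+\vec w$, which is monogenic by construction.

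The only point requiring care is the uniqueness of the harmonic extension in $W^{1,p}$, which is standard for bounded Lipschitz (respectively $C^{1,\gamma}$) domains in the permitted range of $p$; the rest is a direct assembly of results already established, in particular the invertibility of $I+K_0$ on Sobolev trace spaces and the Plemelj-Sokhotski identity. Note that, in spirit, this corollary is just the boundary-value reading of the construction of the Hilbert transform $\Hi$ itself: the formula $\vec w=\Fii[2(I+K_0)^{-1}\tr w_0]$ provides an explicit \emph{interior} harmonic conjugate to $w_0$, playing the role that the radial integration operator $\overrightarrow{U}_{\!\!\Omega}$ plays for star-shaped domains.
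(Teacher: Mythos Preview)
Your argument is correct and is precisely the one the paper has in mind: the corollary carries no separate proof because it is, as the paper says, the ``interior'' reading of the construction \eqref{eq:u+Hu}, and your identification $w_0+\vec w=F_{\partial\Omega}[h_0]$ via uniqueness of the harmonic Dirichlet problem is exactly the mechanism used a few lines earlier in the proof of Theorem~\ref{theo:div-curl} (there with $\Ti[\vec g]$ in place of $w_0$). One small remark: in the Lipschitz case with $p\ge 2+\epsilon(\Omega)$ only part~(a) of Proposition~\ref{prop:invertibility-I+K_0} applies, so $h_0$ is a~priori only in $L^p(\partial\Omega)$; the argument still goes through, but the uniqueness step should then be read as uniqueness for the $L^p$ Dirichlet problem rather than in $W^{1,p}$, consistently with the regularity caveat at the end of Theorem~\ref{theo:div-curl}.
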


\begin{corolario}
 Let $\Omega$ be as in Theorem \ref{theo:div-curl}. The following
  is a right inverse of $\curl$:
\begin{align}\label{eq:inverse-curl}
   \vec g \mapsto \Tiii[\vec g] - \Fii[2(I+K_0)^{-1}\alpha_0],
\end{align}
acting on all $\vec g$ in the class of divergence free vector fields.
\end{corolario}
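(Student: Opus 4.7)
The plan is to obtain this corollary as a direct specialization of Theorem \ref{theo:div-curl}. Given a divergence free vector field $\vec g$, I would apply the theorem with $g_0 = 0$. The weak solenoidal hypothesis on $\vec g$ needed for Theorem \ref{theo:div-curl} coincides precisely with the hypothesis that $\vec g$ is divergence free, so no additional compatibility condition is required.

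With $g_0 = 0$, the first term $-\Tii[g_0]$ in formula \eqref{eq:sol-div-curl} vanishes, and the solution reduces to
\begin{align*}
   \vec w = \Tiii[\vec g] - \Fii[2(I+K_0)^{-1}\alpha_0],
\end{align*}
which is precisely the map \eqref{eq:inverse-curl}. Theorem \ref{theo:div-curl} guarantees that this $\vec w$ satisfies $\div \vec w = 0$ and $\curl \vec w = \vec g$ weakly, so the map is a right inverse of $\curl$ on the class of divergence free vector fields, and $\vec w\in W^{1,p}(\Omega,\R^3)$ in the allowed range of $p$.

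There is really no obstacle here; the only thing worth noting is why the term $-\Tii[g_0]$ is the only one that drops out in \eqref{eq:sol-div-curl}: the associated traces $\alpha_0 = \tr\Ti[\vec g]$ and $\vec\alpha = \tr\Tiii[\vec g]$ as defined in \eqref{eq:boundary-Ti} depend only on $\vec g$ and are unaffected by setting $g_0=0$. Thus the corollary follows by inspection from Theorem \ref{theo:div-curl}, together with the uniqueness clause which shows that the ambiguity in $\vec w$ is, as expected, only by an additive element of $\SI(\Omega)$.
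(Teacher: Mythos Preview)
Your proposal is correct and follows exactly the paper's intended route: the corollary is an immediate specialization of Theorem~\ref{theo:div-curl} with $g_0=0$, so that $-\Tii[g_0]$ drops out and \eqref{eq:sol-div-curl} reduces to \eqref{eq:inverse-curl}. The paper in fact states this corollary without a separate proof, treating it as evident from the theorem; your write-up simply makes the specialization explicit.
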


Since the right inverse of curl \eqref{eq:inverse-curl} acts as
\[   \Tiii - 2 \Fii(I+K_0)^{-1}\tr \Ti \colon \Sol(\Omega,\R^3) 
         \to \Sol(\Omega,\R^3),
\]
we have
\begin{corolario}
  Let $\Omega$ be as in Theorem \ref{theo:div-curl}. The following is
  a right inverse for the double curl operator:
\begin{align*}
  \vec g \mapsto  -L[\vec g] + M[2(I+K_0)^{-1} \alpha_0 \eta],
\end{align*}
for every $\vec g$ in the class of divergence free vector fields,
where $\eta$ the outward pointing unit normal vector to
$\partial\Omega$.
\end{corolario}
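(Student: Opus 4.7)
The plan is to exhibit the stated expression as a $\curl$-primitive of the right inverse of $\curl$ produced by the preceding corollary, so that applying $\curl$ a second time yields $\vec g$ directly from that corollary. Concretely, fixing $\vec g\in\Sol(\Omega,\R^3)$ and setting $\alpha_0 = \tr\Ti[\vec g]$, $h_0 = 2(I+K_0)^{-1}\alpha_0$, the preceding corollary gives
\[
   \curl\bigl(\Tiii[\vec g] - \Fii[h_0]\bigr) = \vec g ,
\]
so it suffices to verify that $\Tiii[\vec g] - \Fii[h_0]$ coincides with $\curl\bigl(-L[\vec g] + M[h_0\eta]\bigr)$ up to the sign dictated by the conventions fixed for $E$, $L$ and $\eta$. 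One further application of $\curl$ then delivers the conclusion.

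Two identities carry out this step. The boundary contribution is handled immediately by \eqref{eq:F0_F1-div-curl}, which gives $\curl M[h_0\eta] = -\Fii[h_0]$. The volume contribution is handled by differentiating under the integral sign in the definition of $L$ and using that $\nabla_{\vec x}\bigl(1/(4\pi|\vec x-\vec y|)\bigr) = -E(\vec y-\vec x)$; this identifies $\curl L[\vec g]$ with $\Tiii[\vec g]$, in agreement (up to sign) with the identity $\Tiii[\vec w] = -M[\vec w\times\eta] - L[\curl\vec w]$ already cited in the proof of Proposition \ref{prop:new-Hf}. Substituting both into the formula for the right inverse of $\curl$ produces the desired factorization, and hence
\[
  \curl\curl\bigl(-L[\vec g] + M[h_0\eta]\bigr) = \vec g.
\]

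Regularity-wise, $L[\vec g]\in W^{2,p}(\Omega,\R^3)$ by Calder\'on--Zygmund theory and $M[h_0\eta]\in W^{1,2}(\Omega,\R^3)$ by Theorem \ref{theo:T_acotado}(c), so both sides of the double-curl identity make sense at least distributionally, and the chain of equalities can be carried out in the classical sense for smooth $\vec g$ and then extended by density and continuity of the operators involved. The main obstacle is purely one of sign book-keeping: the conventions for $E$, $L$, $M$ and the orientation of the outward normal $\eta$ must line up so that the two curl-identifications combine with the correct signs to match the preceding corollary. Once those are fixed, no new analysis is required beyond a differentiation under the integral sign and an invocation of \eqref{eq:F0_F1-div-curl}.
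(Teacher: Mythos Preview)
Your approach is exactly the paper's: apply the preceding corollary, then rewrite $\Tiii[\vec g]-\Fii[h_0]$ as $\curl(-L[\vec g]+M[h_0\eta])$ using \eqref{eq:F0_F1-div-curl} and the identity $\Tiii[\vec g]=-\curl L[\vec g]$. The only point to tidy is the sign you leave ambiguous: with the paper's conventions one has $\Tiii[\vec g]=-\curl L[\vec g]$ (not $+\curl L[\vec g]$), and together with $\curl M[h_0\eta]=-\Fii[h_0]$ this gives precisely $\curl(-L[\vec g]+M[h_0\eta])=\Tiii[\vec g]-\Fii[h_0]$, so no hedging is needed.
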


\begin{proof}
  This is a direct consequence of \eqref{eq:F0_F1-div-curl} and 
  the fact that $\Tiii[\vec g]=-\curl L[\vec g]$.
\end{proof}

\newpage

\end{document}